\RequirePackage[table]{xcolor}                                                                                                                                                                                                                                                                                                                                                                                                                                                                                                                                                                                                                                                                                                                                                                                                                                                                                                                                                                                                                                                                                                                                                                                                                                                                                                                                                                                                                                                                                                                                                                                                                                                                                                                                                                                                                                                                                                                                                                                                                                                                                                                                                                                                                                                                                                                                                                                                                                                                                                                                                                                                                                                                                                                                                                                                                                                                                                                           
\documentclass[a4paper,12pt]{amsart}
\usepackage[T1]{fontenc}
\usepackage{CJK}
\usepackage{romannum}
\usepackage{amsfonts}
\usepackage{mathtools}
\usepackage{amsmath,amscd}

\usepackage{ifthen}
\usepackage{amsrefs}
\usepackage{mathrsfs}
\usepackage{amsthm}
\usepackage{amssymb}

\usepackage{tikz-cd}
\usepackage{graphicx}
\usepackage{relsize}

\usepackage{MnSymbol}

\usepackage{hyperref}
\usepackage{url}
\usepackage{etoolbox}
\usepackage{rotating}

\usepackage[shortlabels]{enumitem}
\usepackage[paper=a4paper,left=20mm,right=20mm,top=25mm,bottom=30mm]{geometry}

\usepackage{tikz}
\usepackage{mleftright}
\usetikzlibrary{backgrounds}
\usepackage{tkz-euclide}
\usepackage{xcolor}
\usetikzlibrary{trees,snakes,shapes.geometric}
\usepackage[outline]{contour}
\contourlength{1.5pt}
\usetikzlibrary{positioning}
\usetikzlibrary{%
  matrix,%
  calc,%
  arrows%
}

\setlist[enumerate]{topsep=0em, itemsep= -0em, parsep = 0 em, label=$(\alph*)$}
\let\emptyset\varnothing
\nocite{*}

\newcommand{\cA}{\mathcal{A}}

\newcommand{\cC}{\mathcal{C}}
\newcommand{\cD}{\mathcal{D}}

\newcommand{\bg}{\bar{\gamma}}

\newcommand{\cK}{\mathcal{K}}

\newcommand{\cN}{\mathcal{N}}

\newcommand{\cT}{\mathcal{T}}

\newcommand{\cW}{\mathcal{W}}

\newcommand{\cS}{\mathcal{S}}

\DeclareMathOperator{\rep}{rep}
\DeclareMathOperator{\Rep}{Rep}

\DeclareMathOperator{\rk}{rk}

\DeclareMathOperator{\modd}{mod}
\DeclareMathOperator{\supp}{supp}

\DeclareMathOperator{\im}{im}

\DeclareMathOperator{\EIP}{EIP}

\DeclareMathOperator{\Inj}{Inj}
\DeclareMathOperator{\Sur}{Surj}

\DeclareMathOperator{\EKP}{EKP}

\DeclareMathOperator{\GR}{GR}
\DeclareMathOperator{\CR}{CR}

\let\emptyset\varnothing
\newtheorem{thm}{Theorem}[subsection]
\newtheorem{cor}[thm]{Corollary}

\newtheorem{proposition}{Proposition}[section]
\newtheorem{Theorem}[proposition]{Theorem}
\newtheorem{Lemma}[proposition]{Lemma}

\newtheorem{corollary}[proposition]{Corollary}

\newtheorem{TheoremA}{Theorem}
\newtheorem{example}{Example}

\newenvironment{Definition}[1][Definition.]{\begin{trivlist}
\item[\hskip \labelsep {\bfseries #1}]}{\end{trivlist}}

\title{Graded Kronecker Modules of Constant Rank }
\author{Jie Liu}

\address{School of Mathematics and Statistics, Guangdong University of Technology
Guangzhou 510520, Guangdong, People's Republic of China }

\email{jie@gdut.edu.cn}

\address{ShenZhen International Center For Mathematics,   Southern University of Science and Technology, shenzhen 518055, China }

\begin{document}

%%%%%%%%%%%%%%%%%%%%%% TITELSEITE %%%%%%%%%%%%%%%%%%%%%%%%%%%%%%%%
 
\rmfamily

\maketitle

%\pagenumbering{roman}

\begin{abstract}

Let $K(n)$ be the generalized Kronecker quiver, and let  $(T(n),\Omega)$ be the universal covering of $K(n)$, where $n\geq 3$ and $\Omega$ is a bipartite orientation. We introduce three different types of representations for quiver $(T(n),\Omega)$ by $K(n)$: maps are injective in a representation maps are all surjective in a representation, and representations are of constant rank. We use $\Inj, \Sur$ and $\GR$ to denote the corresponding subcategories containing these representations, respectively, and we show that

\begin{center}
 $\GR= \Inj\cup\Sur$.
\end{center}

\end{abstract}
\providecommand{\keywords}[1]
{
  \small	
  \textbf{\text{Keywords:}} #1
}
\keywords{generalized Kronecker quiver; constant rank; universal covering}

2020  \textit{Mathematics subject classification:} 	16G20; 	16G70

\pagenumbering{arabic}

\section{Introduction}

Let $k$ be an algebraically closed field. Let $K(n)$ be the generalized Kronecker quiver:  $ 
\begin{tikzcd}
  \circ
    \arrow[r, draw=none, "{\vdots}" description]
    \arrow[r, bend left,        "\gamma_1"]
    \arrow[r, bend right, swap, "\gamma_n"]
    &
    \circ ,
\end{tikzcd}
 $  and let $\cK_n$ be the path algebra of $K(n)$. We use $\modd \cK_n$ to denote the category of the finite-dimensional modules of $\cK_n$. When $n\leq 2$, it is already known how to classify  the indecomposable modules in $\modd \cK_n$ \cite[\Romannum{19}.3]{Assem2}. When $n\geq 3$,  however, it is   hopleless to classify all the indecomposable modules since the representation type is wild now (cf.\cite[\Romannum{18}]{Assem2}, \cite[1.3]{Kerner}). Hence it is desirable to find invariants in the category $\modd \cK_n$.   Julia Pevtsova  once introduced the category of modules of constant Jordan type for a finite group scheme $G$ in 2008 \cite[Section 1]{Carlson}. Julia Worch narrowed the topic down to elementary abelian $p$-groups $E_r$ and the generalized Beilinson algebras $B(n,r)$ \cite{Julia}. In particular, she discussed the Beilinson algebra $B(2,r)$,  and the later is isomorphic to $\cK_r$. She reduced the modules of constant Jordan type to the modules of constant rank in $\modd \cK_n$ \cite[section 3]{Julia}. It says that if $M\in \modd \cK_n$, then we have a linear operator $x^{\alpha}_{M}=\alpha_1M(\gamma_1)+\alpha_2M(\gamma_2)+\cdots+\alpha_nM(\gamma_n): M_1\rightarrow M_2$ for every  $\alpha=(\alpha_1,\cdots,\alpha_n)\in k^n\setminus\{0\}$, and   $M$ is said to be of constant rank when $\rk x^{\alpha}_M$ is fixed for any non-zero  $\alpha$.   We let $\CR=\{M\in\modd\cK_n\mid M \text{ is of constant rank}\}$. Furthermore,  we  introduce two full  subcategories of $\modd\cK_n$ with the equal kernels property and with the equal images property : $\EKP:=\{M\in \modd\cK_n\mid \forall \alpha\in k^n\setminus\{0\}: x^{\alpha}_{M} \text{ is injective} \}$,  $\EIP:=\{M\in \modd\cK_n\mid \forall \alpha\in k^n\setminus\{0\}:x^{\alpha}_{M} \text{ is surjective} \}$ (cf.\cite[Definition 2.1]{Julia}). Apparently, we have $\EKP\cup \EIP \subseteq \CR$.

  Let $\tau$ be the Auslander--Reiten translation of category $\modd\cK_n$. We call an indecomposable module $N\in \modd\cK_n$ \textit{regular} if $\tau^t N\neq 0$ for any $t\in \mathbf{Z}$. It is well-known that the regular component of $\modd\cK_n$ is  of type $\mathbb{Z}A_{\infty}$. If $\cD$ is regular component of $\modd\cK_n$, then for any  regular indecomposable module $M\in \EKP\cap \cD$ there exists a number $m\in \mathbb{N}$ such that $\tau^{m+1}M\in \EIP\cap \cD$, and we call such $m$ the \textit{width} of the regular component $\cD$ \cite[Theorem 3.3]{Julia}.  Let $M\in \modd\cK_n$ be a regular indecomposable module. Daniel Bissinger proved that there existed at most one number $r\in \mathbf{Z}$ such that $\tau^r M\in \CR\setminus (\EKP\cup \EIP)$ \cite[Proposition 3.14]{Daniel}.

 Let  $(T(n),\Omega)$ be the universal covering of $K(n)$, i.e., $T(n)$ is  an $n$-regular tree  with a bipartite orientation $\Omega$ \cite[Section 7]{Daniel}. Let $\modd(T(n),\Omega)$ be the category of finite-dimensional representations of $(T(n),\Omega)$. Similarly, we have subcategories $\Inj$, $\Sur$ and $\GR$ in $\modd(T(n),\Omega)$ corresponding to $\EIP$, $\EKP$ and $\CR$ in $\modd\cK_n$. 
 The category $\modd(T(n),\Omega)$ also admits AR--sequences (cf.\cite[2.2]{Bongartz}), and  there exists a push-down functor $\pi_\lambda:\rep_k(T(n),\Omega)\rightarrow\rep_k(K(n))$.  The functor $\pi_\lambda$ is exact (cf. \cite{Daniel}).  An indecomposable representation $M\in \modd(T(n),\Omega)$ is said to be of \textit{constant rank} if the  representation $\pi_\lambda(M)$ is of constant rank. Claus Michael Ringel defined three types of representations: \textit{sink module}, \textit{flow module}, and \textit{source module} in $\modd(T(n),\Omega)$ (cf. \cite{Claus}). In this paper, we investigate  representations of constant rank of $(T(n),\Omega)$  using  Claus's conceptions, and we show that 
 
 \begin{TheoremA}
 
For the category  $\modd(T(n),\Omega)$  we have 
\begin{center}
 $\GR = \Inj\cup\Sur$.
\end{center}

\end{TheoremA}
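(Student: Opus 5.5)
The plan is to work with an indecomposable $M\in\modd(T(n),\Omega)$ whose push-down $\pi_\lambda(M)$ has constant rank $r$. I want to show that either $r=\dim\pi_\lambda(M)_1$ (all $x^\alpha$ injective) or $r=\dim\pi_\lambda(M)_2$ (all $x^\alpha$ surjective). The inclusion $\Inj\cup\Sur\subseteq\GR$ is immediate from the definitions, so only the reverse inclusion needs work.

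\emph{Setup.} Each arrow of $T(n)$ carries a colour $i\in\{1,\dots,n\}$, namely the arrow $\gamma_i$ it lies over. We have $\pi_\lambda(M)_1=\bigoplus_{x\text{ source}}M_x$ and $\pi_\lambda(M)_2=\bigoplus_{y\text{ sink}}M_y$. The operator $x^\alpha$ is the block matrix whose entry for an arrow $a$ of colour $i$ is $\alpha_iM(a)$.

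\emph{Step 1: reduce to coordinate vectors.} Since $T(n)$ is a tree and the support of $M$ is a finite connected subtree, one can rescale the vertex spaces along paths. This shows that for any $\alpha$ with all $\alpha_i\neq0$, the operator $x^\alpha$ is equivalent to $x^{(1,\dots,1)}$. Similarly, for $\alpha$ with support $S\subseteq\{1,\dots,n\}$, $x^\alpha$ is equivalent to the operator of the restriction of $M$ to the subforest of arrows with colours in $S$. In particular, for the standard basis vector $e_i$, the colour-$i$ arrows form a matching, so
\[
\rk x^{e_i}_{M}=\sum_{a\text{ of colour } i}\rk M(a).
\]
Constant rank therefore gives that this sum equals $r$ for every $i$, and also that $r$ equals the rank of the full operator.

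\emph{Step 2: leaves of the support.} Let $x$ be a leaf of $\supp M$ with its unique arrow $a$ in the support.
\begin{itemize}
\item[(i)] If $x$ is a source, then $M(a)$ is injective; otherwise $\ker M(a)$ splits off as a direct summand of simples, contradicting indecomposability.
\item[(ii)] If $x$ is a sink, then $M(a)$ is surjective, for the dual reason.
\end{itemize}
If $x$ is a source leaf whose arrow has colour $i$, then for every $j\neq i$ the whole of $M_x$ lies in $\ker x^{e_j}$. Hence $r\le\dim\pi_\lambda(M)_1-\dim M_x$. Dually, a sink leaf $y$ whose arrow has colour $j$ forces $r\le\dim\pi_\lambda(M)_2-\dim M_y$ via $x^{e_{i}}$ for $i\ne j$.

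\emph{Step 3: the main argument, and the main obstacle.} I must show that constant rank cannot coexist with a non-injective and non-surjective $x^{(1,\dots,1)}$. The idea is to compare the full operator with the colour-restricted ones. I would induct on $|\supp M|$ by removing a leaf $x$. Suppose $x$ is a source leaf of colour $i$, so $M(a)$ is injective. Choose $\alpha$ supported on the colours other than $i$, chosen so that $\ker x^\alpha$ contains $M_x$ together with the kernel coming from the rest of the support. Then the rank drops strictly below that of the generic operator unless $\ker x^{(1,\dots,1)}=0$. The dual argument at a sink leaf handles cokernels.

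The delicate point, which I expect to be the main obstacle, is guaranteeing a \emph{strict} drop in rank. This requires showing that the kernel contributed by $M_x$ is not already compensated by an increase in rank elsewhere when colour $i$ is deleted. I would first try to control this with the additivity identity of Step 1 summed over all colours, together with the rank inequality $\rk x^{(1,\dots,1)}\ge\rk x^{\alpha}$ for any $\alpha$ supported on a subset of colours. Failing that, I would fall back on Ringel's trichotomy into sink, flow and source modules. One would check that flow modules are never of constant rank unless they are in $\Inj\cup\Sur$. For sink and source modules, the leaf analysis of Step 2 shows directly that $x^{(1,\dots,1)}$ is surjective, respectively injective, and constant rank then propagates this property to every $\alpha$.
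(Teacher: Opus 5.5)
Your Steps~1 and~2 are correct; they match the paper's reduction to $\alpha$ with all $\alpha_i\neq 0$ and its leaf analysis. But Step~3 is where the whole theorem lives, and neither mechanism you propose works.

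\textbf{The strict-drop claim is false.} Let $M$ be the thin module with support $B_1(y)$, $y$ a sink, all maps nonzero. Then $x^{\alpha}_{\pi_\lambda(M)}=(\alpha_1,\dots,\alpha_n)$ has rank $1$ for every $\alpha\neq 0$. Every neighbour $x_i$ of $y$ is a source leaf with $M_{x_i}\subseteq\ker x^{\alpha}$ whenever $\alpha_i=0$. Yet the rank never drops, although $\dim\ker x^{(1,\dots,1)}=n-1$. This is no accident: constant rank fixes $\dim\ker x^{\alpha}=\dim\pi_\lambda(M)_1-r$ for every $\alpha$, so $M_x\subseteq\ker x^{\alpha}$ is harmless. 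Moreover, the rank relations you intend to use (additivity over colours, $\rk x^{(1,\dots,1)}\geq\rk x^{\alpha}$) hold verbatim for modules in $\Inj\cup\Sur$, so they cannot by themselves yield a contradiction.

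\textbf{The induction cannot run.} Deleting a leaf preserves neither constant rank nor indecomposability. For instance, deleting $x_1$ above makes $x^{e_1}=0$.

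\textbf{The fallback fails too.} The thin module on $B_1(x)$ with $x$ a source is a sink module lying in $\Inj$ with $\rk x^{(1,\dots,1)}=1<n$. Dually, the module on $B_1(y)$ is a source module whose generic operator is not injective. So the leaf analysis, which uses only indecomposability, cannot decide surjectivity or injectivity of the generic operator. As for flow modules: one always has a source leaf and a sink leaf at the ends of a diameter walk, so it is never in $\Inj\cup\Sur$. Your sentence about flow modules is therefore exactly the assertion that no indecomposable flow module has constant rank. That is the core of the theorem, and you give no argument for it.

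\textbf{What is missing.} You need a structural consequence of constant rank that ties kernels of one colour to images of another, so that indecomposability can be brought to bear. The paper gets this from Westwick's lemma (Lemma \ref{KI}), localized on the tree as Lemma \ref{tt}, together with rank comparisons at vertices next to leaves (Lemma \ref{point}, Proposition \ref{lj}, Lemma \ref{two leaves}). In the diameter-$3$ flow configuration this gives $\rk\bg^1_n=\rk\bg^1_j+\rk\bg^2_j$ and $\bg^1_n(\ker\bg^1_j)=\im\bg^2_j$. Hence the part of $M$ carried by $\ker\bg^1_j$ and the images it hits splits off as a direct summand, contradicting indecomposability unless those kernels vanish. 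Theorem \ref{es} then propagates this splitting by induction on the diameter.

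\textbf{Two smaller points.} First, $\Inj\cup\Sur\subseteq\GR$ is not immediate from the definitions. It is clear only for $\alpha=e_i$; in general you need a tree argument. One option is \cite[Theorem 8.1]{Daniel}; another is to note that the rank-drop locus is closed and torus-invariant, hence would contain some $[e_i]$. Second, the equality must be read for indecomposable modules, as you and the paper implicitly do. The direct sum of the thin modules on $B_1(x)$ and $B_1(y)$ above, with disjoint supports, has constant rank $2$ but lies in neither $\Inj$ nor $\Sur$.
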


\section{Preliminaries}
In this section, we present a few concepts and some basic results.  For  convenience, we will give some definitions in a short way. A thorough introduction to this part can be found in  \cite[\Romannum{2}-\Romannum{7}]{Assem1}, \cite{Claus} or \cite{Daniel}. Throughout, $k$ will  denote an algebraically closed field.

\subsection{Graph}  A \textit{graph} is a pair $G=(G_0, G_1):$   $G_0$ (whose elements called the vertices) and $G_1$ (whose elements called the edges). If $\{a, b\}$ is an edge, then $a,b$ are also called neighbours.

For a graph $G$, there exists a map $\Omega$, called \textit{orientation}  $:G \rightarrow G_0\times G_0$, such that $\Omega(\{a, a'\})$ is either ($a, a'$) or ($a', a$). We call $(G, \Omega)$  an \textit{oriented graph}. We  write $a\rightarrow a'$ if $\Omega(\{a, a'\})=(a, a')$ and call $a$ the \textit{start} and $a'$ the \textit{target}. Meanwhile we call $a$   a \textit{sink} (or a \textit{source}) if it is not a start (or the target, respectively) of any arrow. If any vertex is a sink or a source, then the orientation  $\Omega$ will be called \textit{bipartite}. A \textit{subgraph}    of a  graph $G=(G_0,G_1)$ is a graph  $G'=(G'_0,G'_1)$ such that $G'_0\subseteq G_0, G'_1\subseteq G_1$.

A  \textit{walk} of length $t\geq 0$ with \textit{start} $a_0$ and \textit{target} $a_t$ in a graph $G$ is a finite sequence 
\begin{center}
$l_t=(a_0|\{a_0,a_1\},\{a_1,a_2\},\cdots,\{a_{t-1},a_t\}|a_t)$
\end{center}
such that   $a_{i-1}$ and $a_i$ are neighbours for $1\leq i\leq t$, and $a_{j-1}\neq a_{j+1}$ for $1\leq j < t$. We sometimes use $(a_0,a_1,\cdots, a_t)$ to denote the walk $l_t$ if there only exists one walk connecting $a_0$ and $a_t$. When $t=2r$,   $a_r$ is called its \textit{center} and $r$ its \textit{radius} \cite[2.1]{Claus}. When $t=2r+1$,  $\{a_r, a_{r+1}\}$ is called its \textit{center} and $r$ its \textit{radius}. If for any pair $a, b$ of vertices in $G$ there always exists a walk connecting $a$ and $b$, then we say that $G$ is \textit{connected}. We call a walk of length $t \geq 0$ being a \textit{cycle} whenever its source and target coincide. A cycle of length $1$ is said to be a \textit{loop}. A graph is called \textit{acyclic} if it contains on cycles,  and it is called \textit{finite} if $G_0$ and $G_1$ are finite set.

We call a graph $G$  a \textit{tree} if it is connected and walks connecting a vertex with itself are always length 0. Suppose that $G$ is a tree.  We define

\begin{center}
$d(a, b):=$ the length of walk connecting $a$ and $b$.
\end{center} 
This is well-defined since there  only exists one walk from $a$ to $b$ in the tree $G$. If $G'$ is a subset of tree $G$ and $x\in G'$, we define

\begin{center}
$d(x, G'): =$min$\{d(x, a)\mid a \in G'\}$.
\end{center}

For every finite tree $G$, the walks of maximal possible length $d(G)$ are  called the \textit{diameter} of $G$.  Let $r(G)$ denote the radius of $G$. It can be shown that for a finite tree $G$, all diameter walks of $G$ have the same center  \cite[2.1]{Claus}. Suppose that $(a_0,a_1,\cdots,a_t)$ is a diameter walk of $G$.  We  define, 
\begin{center}
\[ c(G)=\begin{cases} 
     a_r & t=2r , \\
     \{a_r,a_{r+1}\} & t=2r+1.
   \end{cases}
\]
\end{center}

\subsection{Quivers}

A \textit{quiver} is just an oriented graph (loops and multiple arrows are allowed), usually denoted by  $Q=(Q_0, Q_1, s, t)$, where $s,t: Q_1\rightarrow Q_0$.  For every $x\in Q_0$, we define a set 

\begin{center}
$\mathcal{N}(x):=\{y\in Q_0\mid \exists \alpha \in Q_1: t(\alpha)=y \text{ when } s(\alpha)=x \text{ or } s(\alpha)=y \text{ when } t(\alpha)=x\}$
\end{center}
and call it the \textit{neighbourhood} of $x$, elements in it are called \textit{neighbour} of $x$, respectively. When $\mid \mathcal{N}(x)\mid \leq 1$,  we say that $x$  is a  \textit{leaf}. A quiver $Q$  is said to be \textit{locally bounded} if for every $x\in Q_0$ the neighbourhood $\cN(x)$ is finite and there exists a number $m_x\in \mathbb{N}$ such that  each walk in $Q$ which starts or ends in $x$ is of length $\leq m_x$. In this note, we always assume that it is locally bounded when we talk about a quiver $Q$.

When $Q$ is a tree, we have the following easy conclusion.

 \begin{Lemma}\label{l}
Let $(a_0, a_1, a_2, \cdots, a_l)$ be a diameter walk of  a finite tree $Q$, where $l\in\mathbb{N}_{\geq 2}$. Then the  set $\mathcal{N}'(a_1)=\mathcal{N}(a_1)\setminus \{a_2\}$ consists of  all leaves.
\end{Lemma}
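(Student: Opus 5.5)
We argue by contradiction, exploiting the maximality of the diameter walk $(a_0,a_1,\dots,a_l)$. Fix $b\in\mathcal{N}'(a_1)=\mathcal{N}(a_1)\setminus\{a_2\}$; we must show $|\mathcal{N}(b)|\le 1$. Since $b$ is a neighbour of $a_1$, we have $a_1\in\mathcal{N}(b)$. So it suffices to show that $b$ has no neighbour $c\neq a_1$.

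Suppose such a $c$ exists. We first dispose of the degenerate cases. The vertex $c$ cannot be $a_1$ by choice. It also cannot be $a_2$: then $(a_1,b,a_2,a_1)$ would give a nontrivial walk from $a_1$ to itself, since $b\neq a_2$ and consecutive vertices are distinct, contradicting that $Q$ is a tree.

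More generally, $c$ lies off the walk. Because $Q$ is a tree, the unique walk from $c$ to $a_1$ is $(c,b,a_1)$. If $c$ equalled some $a_i$, then there would be two distinct walks from $a_i$ to $a_1$: one along the diameter and one through $b$. This forces a cycle unless $b=a_0$ or $b=a_2$. The case $b=a_2$ is excluded by hypothesis. If $b=a_0$, then $c$ is a neighbour of $a_0$ other than $a_1$. The same uniqueness argument gives $c\notin\{a_1,\dots,a_l\}$, and we treat $c$ exactly as in the next step.

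Now consider the sequence $(c,b,a_1,a_2,\dots,a_l)$, where $b$ may be $a_0$. Consecutive entries are neighbours, and the no-backtracking condition holds:
\begin{itemize}
\item $c\neq a_1$ by choice;
\item $b\neq a_2$ by hypothesis;
\item the remaining triples are those of the original walk.
\end{itemize}
So this is a walk.

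If $b\neq a_0$, the walk $(c,b,a_1,\dots,a_l)$ has length $l+1$. If $b=a_0$, the walk is $(c,a_0,a_1,\dots,a_l)$, again of length $l+1$. In either case we obtain a walk in $Q$ strictly longer than the diameter, contradicting maximality. Hence $b$ has no neighbour besides $a_1$, so $b$ is a leaf.

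The only delicate point is checking that the extended sequence is a genuine walk, meaning it has no backtracking. The hypothesis $l\ge 2$ guarantees that $a_2$ exists, so $\mathcal{N}'(a_1)$ is defined as stated. Note that the argument never needs the sequence to have distinct vertices, since the paper's definition of walk only forbids immediate backtracking.
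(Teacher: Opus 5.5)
Your proof is correct and is essentially the paper's argument: a non-leaf $b\in\mathcal{N}'(a_1)$ would have a neighbour $c\neq a_1$, and the walk $(c,b,a_1,\dots,a_l)$ of length $l+1$ would then contradict the maximality of the diameter walk. Your case analysis on whether $c$ lies on the walk is unnecessary, since only the no-backtracking checks $c\neq a_1$ and $b\neq a_2$ are needed, but it is harmless.
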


\begin{proof}
Suppose that there exists a vertex $a'\in \mathcal{N}'(a_1) $ being not a leaf. It  means that there  exists at least one vertex $a_1 \neq a''\in \mathcal{N}(a')$ such that the length of walk $(a'', a', a_1, a_2, \cdots, a_m)$ is greater than that of diameter walk $(a_0,a_1, a_2,\cdots,a_l)$ in the tree $Q$, this is a contradiction. 
\end{proof}

 A finite-dimensional representation $M=((M_x)_{x\in Q_0}, (M(\alpha))_{\alpha\in Q_1})$ over $Q$ consists of vector spaces $M_{x}$ and $k$-linear maps $M(\alpha): M_{s(\alpha)} \rightarrow M_{t(\alpha)}$ such that $\dim_{k}M:=\sum _{x\in Q_0}\dim_k M_{x}$ is finite. Normally, we use $P(i)$, $I(i)$ and $S(i)$ to denote the projective, injective and simple representation at the vertex $i$, respectively,  $i\in Q_0$. A \textit{morphism} $f: M\rightarrow N$ between representations is a collection of $k$-linear maps $(f_z)_{z\in Q_0}$ such that for each arrow $\alpha: x\rightarrow y$ there is a commutative diagram 

\begin{center}
$\begin{array}[c]{ccc}
M_{x}&\stackrel{M(\alpha)}{\longrightarrow}&M_y\\
\downarrow\scriptstyle{f_x}&&\downarrow\scriptstyle{f_y}\\
N_x&\stackrel{N(\alpha)}{\longrightarrow}&N_y.
\end{array}$
\end{center}
We thus define a category $\Rep_k(Q)$  of  $k$-linear representations  of $Q$.  We denote by $\rep_k(Q)$ the full  subcategory of $\Rep_k(Q)$ consisting of finite-dimensional representations.

\subsection{Covering Theory}
 We now focus on the generalized Kronecker quiver $K(n)$, $n\geq 3$, 
\[
\begin{tikzcd}
    1 \circ
    \arrow[r, draw=none, "{\vdots}" description]
    \arrow[r, bend left,        "\gamma_1"]
    \arrow[r, bend right, swap, "\gamma_n"]
    &
    \circ 2.
\end{tikzcd}
\]

Let $ \mathcal{K}_n=kK(n)$. Clearly, algebra $\cK_n$ is Artin \cite[Section 2.1]{Julia}.  Let $\epsilon_i$ correspond to the trivial path point $i$, $i\in\{1, 2\}.$  We now give a brief  construction of the covering $T(n)$ of Kronecker quiver $K(n)$ universally \cite[7.1]{Daniel}. Let $Q=(Q_0,Q_1,s,t)$ be a quiver.  We define formal inverse $(Q_1)^{-1}$ on $Q_1$, where

\begin{enumerate}
\item  $(Q_1)^{-1}:=\{\alpha^{-1}\mid \alpha \in Q_1\}$;

\item $s(\alpha^{-1}):=t(\alpha)$ and $t(\alpha^{-1}):=s(\alpha)$ for any $\alpha^{-1}\in (Q_1)^{-1}$;

\item  we say that $\omega$  is a \textit{path} of $Q_1$, provided there exists $\omega=\alpha^{\varepsilon_n}_n\cdots \alpha^{\varepsilon_1}_1$, where $\alpha_i\in Q_1$, $\varepsilon\in\{1, -1\}$ and $s(\alpha^{\varepsilon_{i+1}}_{i+1})=t(\alpha^{\varepsilon_i}_i)$ for all $i<n$;
\item $s(\omega):=s(\alpha^{\varepsilon_1}_1)$ and $t(\omega):=t(\alpha^{\varepsilon_n}_n)$.

\end{enumerate}

Let $W:=\{\text{paths of $K(n)$}\}$. We introduce an equivalence relation $\sim$  on  $W$, which is generated by $\gamma^{-1}_i \gamma_i\sim \epsilon_1$ and $\gamma_i\gamma^{-1}_i\sim \epsilon_2$. We define an involution $(-)^{-1}$ on $W$, where 
\begin{center}
$(-)^{-1}$: $W\rightarrow W; (\alpha^{\varepsilon_n}_n\cdots \alpha^{\varepsilon_1}_1)^{-1} \mapsto \alpha^{-\varepsilon_1}_1\cdots \alpha^{-\varepsilon_n}_n$.
\end{center}
 Let $\pi(K(n))$  be the fundamental group of $K(n)$ at the point 1, i.e. $\pi(K(n))=\{[\alpha]\mid s(\alpha)=t(\alpha)=1\}$, where $[\alpha]$ is the equivalence classe of unoriented path $\alpha$. The multiplication is  the concatenation of paths. We define [$\omega$]$^{-1}$:=[$\omega^{-1}$] and the identity element is [$\epsilon_1$].  Then quiver $T(n)$ is given by the following data:
\begin{enumerate}

\item $(T(n))_0$ is the set of equivalence classes of paths starting at 1.

\item When $\omega^{'}\sim \gamma_i \omega$ for some $i\in\{1, \cdots, r\}$, we say that there exists an arrow from [$\omega$] to [$\omega^{'}$]. 
\end{enumerate}

Let $\pi: T(n)\rightarrow K(n); [\omega] \mapsto t(\omega), ([\omega]\rightarrow [\gamma_i \omega]) \mapsto \gamma_i$.  Action of group $G=\pi(K(n))$ on $T(n)$ is the concatenation of paths: 
\begin{center}
$g.[v]=[v\omega^{-1}]$ and
$g.([u]\rightarrow[\gamma_iu])=([u\omega^{-1}]\rightarrow [\gamma_i u\omega^{-1}])$,
\end{center}
where  $g=[\omega]\in\pi(K(n))$ and $[v], [u]\in T(n)_0$ with arrow $[u]\rightarrow [\gamma_i u]=[v]$.
Finally, we define $T^+_{n}:=\pi^{-1}(\{1\})$, $T^-_n:=\pi^{-1}(\{2\})$.  Let $\Omega$ be the bipartite  on $T(n)$.  Then for any  $M\in$ rep$_k(T(n), \Omega)$,  group $G$ acts on $M$ via:
\begin{center}
$M^g:=(((M^g)_x)_{x\in T(n)_0}$, $(M^g(\alpha))_{\alpha\in T(n)_1})$,
\end{center}
where $(M^g)_x:= M_{g.x}$ and $M^g(\alpha):=M(g.\alpha)$. 

\quad

We now give several definitions.

\begin{Definition} Let $ \emptyset\neq S\subseteq T(n)_0$ be a set of vertices.
\item[$(a)$] The unique minimal tree in $T(n)$ containing $S$ is denoted by $T(S)$.

\item[$(b)$] A vertex $x\in T(S)_0$ is called a \textit{leaf} of $T(S)$,  provided $\mid \mathcal{N}(x)\cap T(S)_0\mid \leq 1$. 

\end{Definition}
\begin{Definition}
 Let $x\in T(n)_0$ and $M\in$ mod$(T(n), \Omega)$ be a module.
\begin{enumerate}
\item[$(a)$] The set supp$(M):=\{y\in T(n)_0\mid M_y\neq0\}$ is called the \textit{support} of $M$.

\item[$(b)$] The vertex $x$ is said to be a \textit{leaf} $M$,  provided $x$ is a leaf of $T(M):=T(\text{supp}(M))$. 

\end{enumerate}

 \end{Definition}

Let $M\in\modd(T(n),\Omega)$. We call $M$ a \textit{thin} module, provided dim$_kM_a\leq 1$ for every $a\in$ $T(n)_0$. Suppose that $M\in$ mod$(T(n),\Omega)$ is a thin module. According to \cite[Proposition 1]{Claus3}, we know that $M$ is indecomposable if supp$(M)$ is connected. Now we look at an example.

\begin{example}
Let $M\in\rep_k(T(3), \Omega)$, and let $T(M)$ be the following:

\tikzstyle{level 1}=[sibling angle=120,level distance = 30, ->]
\tikzstyle{level 2}=[sibling angle=90,level distance = 30, <-]
\tikzstyle{level 3}=[sibling angle=60,level distance = 30, ->]
\tikzstyle{level 4}=[sibling angle=45,level distance = 30, <-]
\tikzstyle{every node}=[]
\tikzstyle{edge from parent}=[segment angle=10,draw]
\begin{center}

\begin{tikzpicture}[grow cyclic, shape=circle,cap=round, scale=1, every node/.style={scale=0.7}]
\node {$[\epsilon_1]$} 
    child {node{$[\gamma_3]$} child {node {$\circ$} child {node{$\circ$}} child {node {$\circ$}}} child {node {$\circ$} child {node{$\circ$}} child {node{ $\circ$}}}}
    child { node {$[\gamma_2]$} child {node {$\circ$} child {node {$\circ$}} child {node {$\circ$}}} child {node {$\circ$} child {node {$\circ$}} child {node{$\circ$}}}}
    child { node {$[\gamma_1]$} child {node {$\circ$} child {node {$\circ$}} child {node {$\circ$}}} child {node {$\circ$} child {node {$\circ$}} child {node{$\circ$}}}};

\end{tikzpicture}
\end{center}
 where $\dim_kM_a=1$, $\dim_k M_b=0$ for all $a\in S$, $b\nin S$,  and $M(\gamma_i)=\lambda_i$, $\lambda_i\in  k\setminus\{0\}$,  where  $S=\{ [\epsilon_i], [\gamma_i]\mid 1\leq i\leq 3 \}$. Let $g=[\gamma^{-1}_2\gamma_1]\in G$. Then $M$ is indecomposable and supp($M^g$) is the following  \cite[Proposition 1]{Claus5}:

\begin{center}

\begin{tikzpicture}[grow cyclic, shape=circle,cap=round, scale=1, every node/.style={scale=0.7}]
\node {\textcolor{blue}{$\circ$}} 
    child { node {\textcolor{blue}{$\circ$}} child {node {$\circ$} child {node {$\circ$}} child {node {$\circ$}}} child {node {$\circ$} child {node {$\circ$}} child {node {$\circ$}}}}
    child {node{$[\gamma_2]$} child {node {$\circ$} child {node {$\circ$}} child {node {$\circ$}}} child {node {$[\epsilon'_1]$} child {node {$[\gamma'_3]$}} child {node {$[\gamma'_2]$}}}}
    child {node{\textcolor{blue}{$\circ$}} child {node {$\circ$} child {node {$\circ$}} child {node {$\circ$}}} child {node {$\circ$} child {node {$\circ$}} child {node {$\circ$}}}};
\end{tikzpicture}
\end{center}
where $[\epsilon'_1]=[\gamma^{-1}_1\gamma_2], [\gamma'_2]=[\gamma_2\gamma^{-1}_1\gamma_2],[\gamma'_3]=[\gamma_3\gamma^{-1}_1\gamma_2]$. There exist $\dim_k M^g_a=1$ for all $a\in T(M^g)_0$ and $M^g(\gamma_i)=\lambda_i$. 
\end{example}
 
We define a push-down functor     $\pi_\lambda:$ rep$_k(T(n), \Omega)\rightarrow$ rep$_k(K(n)); M \mapsto (\pi_\lambda(M)_j, \pi_\lambda(M)(\gamma_i))$ factoring through $T(n)/G$, that is, $\pi_\lambda(M^g)=\pi_\lambda(M)$, $\forall g\in G$, where

\begin{equation*} 
\pi_\lambda(M)_j:=\bigoplus _{\pi(y)=j}M_y,
\end{equation*}

\begin{equation*} 
\pi_\lambda (M)(\gamma_i):=\bigoplus _{\pi(\beta)=\gamma_i} M(\beta):\pi_\lambda (M)_1\rightarrow \pi_\lambda (M)_2, j\in \{1, 2\},1\leq i\leq n.
\end{equation*}
The  functor $\pi_{\lambda}$ is  exact \cite[3.2]{Bongartz}. Since the category $\rep_k(K(n))$ is equivalent to the category  $\modd \cK_n$ of finite-dimensional modules for path algebra $\cK_n$ of generalized Kronecker quiver $K(n)$,   we sometimes say that a representation  $M\in\rep_k(K(n))$  is a module of $\mathcal{K}_n$. We call modules in $\rep_k(T(n), \Omega)$  the \textit{graded Kronecker modules} (or \textit{graded modules}, or simply \textit{modules}).  We often use  $\modd(T(n),\Omega)$ to denote  those modules.  We have the following
 
\begin{thm}\cite[Theorem 7.1]{Daniel}\label{push-down}
The following statements hold.
\begin{enumerate}

\item $\pi_\lambda$ sends indecomposable representations in $\rep_k(T(n), \Omega)$ to indecomposable representations  in $\rep_k(K(n))$.

\item If $M\in\rep_k(T(n), \Omega)$ is indecomposable, then $\pi_\lambda(M)\cong \pi_\lambda(N)$ if and only if $M^g\cong N$ for some $g\in G$.

\item $\pi_\lambda$ sends $AR$-sequences to $AR$-sequences and $\pi_\lambda$ commutes with the Auslander-Reiten translations.

\item If $M\in\rep_k(T(n), \Omega)$ is indecomposable in a component $\cD$ with $\pi_\lambda(M)$ in a component $\cC$, then $\pi_\lambda$ induces a covering $\cD\rightarrow\cC$ of translation quivers.

\end{enumerate}
\end{thm}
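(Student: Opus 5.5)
The plan is to derive all four statements from Gabriel's covering theory (as presented by Bongartz and Gabriel, cf.\ \cite{Bongartz}) applied to the Galois covering $\pi:(T(n),\Omega)\rightarrow K(n)$ with group $G=\pi(K(n))$. The first step is to check the hypotheses of that theory. $T(n)$ is locally bounded, since it is a tree with bipartite orientation: every path has length at most $1$ and every neighbourhood has exactly $n$ elements. Moreover $G$ acts freely on $T(n)_0$ and $T(n)_1$ with $T(n)/G=K(n)$, because $g.[v]=[v\omega^{-1}]=[v]$ forces $[\omega]=[\epsilon_1]$. Finally, $G$ is a free group, being the fundamental group of a graph, and hence torsion-free.

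Next I would introduce the pull-up functor $\pi^{\bullet}:\rep_k(K(n))\rightarrow\Rep_k(T(n),\Omega)$, given by $(\pi^\bullet N)_x=N_{\pi(x)}$. Then I would establish the two basic facts: $\pi_\lambda$ is left adjoint to $\pi^\bullet$, and $\pi^\bullet\pi_\lambda(M)\cong\bigoplus_{g\in G}M^g$. Together these give, for finite-dimensional $M,N$,
\[
\operatorname{Hom}_{K(n)}(\pi_\lambda M,\pi_\lambda N)\cong\bigoplus_{g\in G}\operatorname{Hom}(M,N^g),
\]
where the sum is finite because $M$ and $N$ have finite support.

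For (a), take $M$ indecomposable. Then $\operatorname{End}(\pi_\lambda M)\cong\bigoplus_g\operatorname{Hom}(M,M^g)$ is graded by $G$, and its degree-$1$ part $\operatorname{End}(M)$ is local. The main obstacle is to show that the other components lie in the radical. Since $G$ is torsion-free and $M$ has finite support, $M^g\not\cong M$ for $g\neq 1$: an isomorphism would make $\operatorname{supp}M$ stable under the infinite cyclic group $\langle g\rangle$, which acts freely. So every $f\in\operatorname{Hom}(M,M^g)$ with $g\neq1$ is a non-isomorphism between indecomposables. I would then argue, as in Gabriel's proof, that an element of the form $\mathrm{unit}+\sum_{g\ne1}f_g$ is invertible. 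The key point is that products $f_{g_1}^{g_2\cdots}\circ\cdots$ of such homogeneous components become nilpotent by a Harada--Sai / finite-length argument, combined with the finiteness of $\{g\mid \operatorname{Hom}(M,M^g)\neq0\}$. This shows that $\operatorname{End}(\pi_\lambda M)$ is local, hence $\pi_\lambda M$ is indecomposable.

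For (b), the "if" direction is immediate from $\pi_\lambda(M^g)=\pi_\lambda(M)$. For the converse, I would apply $\pi^\bullet$ to obtain $\bigoplus_g M^g\cong\bigoplus_g N^g$ and then use Krull--Schmidt (valid for locally finite-dimensional representations of locally bounded quivers) to conclude $N\cong M^g$ for some $g$.

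For (c), I would use exactness of $\pi_\lambda$ together with the adjunction. Given an AR-sequence $0\rightarrow\tau M\rightarrow E\rightarrow M\rightarrow0$, its image is non-split by (b), since its end terms are indecomposable by (a). To see that the image is almost split, I would lift any non-retraction $X\rightarrow\pi_\lambda M$ through the homogeneous decomposition of Hom above, factoring each homogeneous component through $E^g\rightarrow M^g$. Uniqueness of AR-sequences then gives $\tau\pi_\lambda M\cong\pi_\lambda\tau M$.

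For (d), I would combine (a)--(c): $\pi_\lambda$ maps vertices and arrows of $\cD$ to $\cC$ and commutes with $\tau$. Irreducible maps $X\rightarrow M$ correspond bijectively, via the decomposition of middle terms, to the irreducible maps into $\pi_\lambda M$. This gives the local bijectivity of a covering of translation quivers.
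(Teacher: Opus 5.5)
The paper gives no proof of this statement; it quotes \cite[Theorem 7.1]{Daniel}. That result is Gabriel's covering theory applied to the Galois covering $T(n)\to K(n)$, whose group $G$ is free and therefore acts freely on isomorphism classes of indecomposables. Your outline follows the same route. Your arguments for (a) (the ideal $\operatorname{rad}\operatorname{End}(M)\oplus\bigoplus_{g\neq 1}\operatorname{Hom}(M,M^g)$ is nilpotent by Harada--Sai, since $M^g$ is not isomorphic to $M$ for $g\neq 1$) and for (b) (apply $\pi^\bullet$ and use Krull--Schmidt--Azumaya) are fine in outline.

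There is a genuine gap in (c). The formula $\operatorname{Hom}(\pi_\lambda X',\pi_\lambda M)\cong\bigoplus_{g}\operatorname{Hom}(X',M^g)$ is only available when the test module is a push-down $X=\pi_\lambda(X')$. To prove that $\pi_\lambda(\varepsilon)$ is almost split, you must factor every non-retraction $X\to\pi_\lambda M$ with $X$ an arbitrary $K(n)$-module. For $n\geq 3$ there are many indecomposables not of this form. Already the module of dimension vector $(1,1)$ with $N(\gamma_1)=N(\gamma_2)=1$ and $N(\gamma_i)=0$ for $i\geq 3$ is one, since push-downs of dimension vector $(1,1)$ have only one nonzero arrow. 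So your argument only gives almost-splitness relative to push-downs. The deduction of $\tau\pi_\lambda M\cong\pi_\lambda\tau M$ from uniqueness of AR-sequences is then unsupported as well.

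The standard repair reverses the order of your steps. First show $\pi_\lambda\tau\cong\tau\pi_\lambda$ directly: $\pi_\lambda$ is exact, sends $P(x)$ to $P(\pi(x))$ and $I(x)$ to $I(\pi(x))$, preserves minimal projective resolutions, and commutes with the Nakayama functor. Then use the following fact: a non-split sequence $0\to\tau C\to B\to C\to 0$ with $C$ indecomposable is almost split as soon as it is annihilated by $\operatorname{rad}\operatorname{End}(C)$. This follows from the Auslander--Reiten formula $\operatorname{Ext}^1(X,\tau C)\cong D\underline{\operatorname{Hom}}(C,X)$, because $v\varphi\in\operatorname{rad}\operatorname{End}(C)$ for every non-retraction $v\colon X\to C$ and every $\varphi\colon C\to X$. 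Now the only test maps are radical endomorphisms $h=\sum_g\pi_\lambda(h_g)$ of $\pi_\lambda M$, and there your decomposition does apply. Each component $h_g\colon M\to M^g$ is a non-retraction, so $\varepsilon^g h_g=0$, and hence $\pi_\lambda(\varepsilon)h=\sum_g\pi_\lambda(\varepsilon^g h_g)=0$.

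Finally, non-splitness of $\pi_\lambda(\varepsilon)$ is most easily seen by applying $\pi^\bullet$, which turns it into $\bigoplus_g\varepsilon^g$. Your appeal to (b) needs an extra remark: an indecomposable summand of $E$ isomorphic to some $M^g$ or $(\tau M)^g$ would force an irreducible map to be bijective.
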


\begin{cor}\cite[Corollary 7.2]{Daniel}\label{component}
Let $\cD$ be a regular component of $\modd(T(n),\Omega)$. Then the covering $\cD\rightarrow\pi_\lambda(\cD)$ is an isomorphism of translation quivers. In particular, $\cD$ is of type $\mathbb{Z}A_{\infty}$.

\end{cor}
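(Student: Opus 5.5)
The statement to prove is Corollary~\ref{component}: for a regular component $\cD$ of $\modd(T(n),\Omega)$, the covering $\cD\rightarrow\pi_\lambda(\cD)$ is an isomorphism of translation quivers, and $\cD$ is of type $\mathbb{Z}A_{\infty}$.

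By Theorem~\ref{push-down}(d), $\pi_\lambda$ restricted to $\cD$ is already a covering of translation quivers $\cD\rightarrow\cC$, where $\cC=\pi_\lambda(\cD)$. The target $\cC$ is a regular component of $\modd\cK_n$. Indeed, by Theorem~\ref{push-down}(c), $\pi_\lambda$ commutes with $\tau$. So $\tau^t\pi_\lambda(M)=\pi_\lambda(\tau^tM)\neq 0$ for all $t\in\mathbb{Z}$ whenever $M\in\cD$ is regular. Since $\pi_\lambda$ is exact and faithful on objects, a nonzero module is sent to a nonzero module. Hence $\cC$ is of type $\mathbb{Z}A_\infty$, as recalled in the introduction. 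The whole task therefore reduces to showing that this covering is injective on vertices. Surjectivity is automatic for a covering onto a connected component, and arrows are then handled by the local bijectivity of the covering.

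\textbf{Step 1: the covering is a trivial covering.} I would use that $\mathbb{Z}A_\infty$ is simply connected as a translation quiver. Its fundamental group, generated by the mesh relations, is trivial: every cycle in the underlying graph of $\mathbb{Z}A_\infty$ decomposes into meshes. Standard covering theory of translation quivers (Bongartz--Gabriel, Riedtmann) then says that every connected covering $\cD\rightarrow\mathbb{Z}A_\infty$ is an isomorphism.

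\textbf{Step 2 (alternative, more self-contained).} Alternatively, I would argue injectivity directly via the group action. Suppose $\pi_\lambda(M)\cong\pi_\lambda(N)$ with $M,N\in\cD$. By Theorem~\ref{push-down}(b), $N\cong M^g$ for some $g\in G$. Then $g$ induces an automorphism of $\cD$, namely $X\mapsto X^g$, which commutes with $\tau$. One checks that this automorphism fixes the quasi-length of each vertex, since quasi-length is read off by the covering from $\cC$. Hence it is a power of $\tau$ on the quasi-simple row. So $M^g\cong\tau^sM$ for some $s$. Applying $\pi_\lambda$ gives $\pi_\lambda(M)\cong\tau^s\pi_\lambda(M)$. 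For regular modules over the wild Kronecker algebra this forces $s=0$, because regular modules are not $\tau$-periodic; their dimension vectors change under $\tau$ when $n\geq 3$. Thus $N\cong M$.

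\textbf{Step 3.} Finally, I would check that $g\neq 1$ cannot fix $M$. This holds because $G$ acts freely on $T(n)_0$, and $M$ has finite support. Concretely, $\mathrm{supp}(M^g)$ is a translate of $\mathrm{supp}(M)$, and a free action on an infinite tree has no finite invariant set. In fact this step is not even needed for injectivity on isoclasses, which is what the translation-quiver statement requires.

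\textbf{Main obstacle.} The main obstacle is Step 1 or Step 2, that is, justifying that the deck transformation acts trivially. I would rely first on the simple connectedness of $\mathbb{Z}A_\infty$, citing the standard result that coverings of simply connected translation quivers are isomorphisms. I would fall back on the $\tau$-nonperiodicity argument only if the citation is unavailable.
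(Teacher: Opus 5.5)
The paper gives no proof of this statement; it only quotes it from Bissinger's thesis. Your Step 1 is the standard argument and it is correct, taken together with your preliminary observation that $\pi_\lambda(\cD)$ contains a regular indecomposable and is therefore a regular component of $\modd\cK_n$ of type $\mathbb{Z}A_\infty$.

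Spelled out, Step 1 runs as follows:
\begin{enumerate}
\item $\cD$ is a connected stable translation quiver, and the covering lifts walks uniquely.
\item Lifts of mesh-homotopic walks with the same start end at the same vertex. This uses that $\cD$ is stable, that $\pi_\lambda$ commutes with $\tau$, and that $\pi_\lambda$ is bijective on $x^{+}$ and $x^{-}$.
\item Every closed walk in $\mathbb{Z}A_\infty$ is null-homotopic, so two vertices of $\cD$ over the same vertex of $\cC$ coincide.
\end{enumerate}
A minor point: the reason $\pi_\lambda$ sends nonzero modules to nonzero ones is simply $\dim_k\pi_\lambda(M)=\dim_k M$.

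Your fallback Step 2, however, is circular and should not be relied on. The deck transformation $X\mapsto X^g$ satisfies $\pi_\lambda(X^g)\cong\pi_\lambda(X)$, so it automatically preserves the quasi-length pulled back from $\cC$. That does not imply it acts on the quasi-simple row as a power of $\tau$. For that you need the vertices of $\cD$ lying over the quasi-simple $\tau$-orbit of $\cC$ to form a single $\tau$-orbit.

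Now, $\tau$-orbits of $\cD$ map bijectively onto $\tau$-orbits of $\cC$, because there are no $\tau$-periodic regular $\cK_n$-modules. So the single-orbit condition is equivalent to the fibre over a quasi-simple being a single point. That is exactly the injectivity you are trying to prove. In a hypothetical nontrivial covering, $X\mapsto X^g$ would permute several distinct $\tau$-orbits of quasi-simples and would agree with no power of $\tau$.

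The only genuine input is therefore the homotopy-lifting argument of Step 1. Step 3, the freeness of the $G$-action on isoclasses, is true but, as you note, beside the point. The real content is that $M^g\notin\cD$ whenever $M^g\not\cong M$, and Step 3 does not address this.
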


\begin{figure}[!h]

\begin{center}

\begin{tikzpicture}[very thick,scale=0.7]

                    [every node/.style={fill, circle, inner sep = 1pt}]

%%%%%%%%%%%%%%%%%%%%%%%%%%%%%% Parameter %%%%%%%%%%%%%%%%%%%%%%%%%%%%%%%%%%%

\def \n {8} % #Knoten Reihe  - 1

\def \m {4} % #Knoten Spalte - 1

\def \translation {1} % 1 Für Translation

\def \ab {0.15} % Abstand Pfeil und Knoten

\def \Pab {0.6} % Halber Abstand Horizontal

\def \lcone {1} % 1 für linken Kegel

\def \ldist {3} % Anzahl der quasi-einfachen die eingeschlossen werden - 1

\def \lcolor {red} % white für keine Farbe

\def \rcone {1} % 1 für rechten Kegel

\def \rdist {3} % Anzahl der quasi-einfachen die eingeschlossen werden - 1

\def \rcolor {red} %  white für keine Farbe

\def \llcone {0} % 1 für einen zweiten linken Kegel rechts von lcone

\def \lldist {4} % Anzahl der quasi-einfachen die eingeschlossen werden - 1

\def \rrcone {0} %1 für einen zweiten rechten Kegel links von rcone

\def \rrdist {4} % Anzahl der quasi-einfachen die eingeschlossen werden - 1

%%%%%%%%%%%%%%%%%%%%%%%%%%%% Quellcode %%%%%%%%%%%%%%%%%%%%%%%%%%%%%%%%%%%%%%

\foreach \a in {0,...,\n}{

\foreach \b in {0,...,\m}{

    \ifthenelse{\a = \n \and \b < \m}{

   %\node[color=black] ({\a,\b}) at ({\a*2*\Pab+\Pab},{\b*2*\Pab+\Pab}) {$\circ$};

   \node[color=black] ({\a,\b,5})at ({\a*2*\Pab},{\b*2*\Pab}) {$\circ$};

     }

     {

      \ifthenelse{\b = \m \and \a < \n}{

      \node[color=black] ({\a,\b}) at ({\a*2*\Pab+\Pab},{\b*2*\Pab+\Pab}) {$\circ$};

      \node[color=black] ({\a,\b,5})at ({\a*2*\Pab},{\b*2*\Pab}) {$\circ$};

      }

      {

      \ifthenelse{\b = \m \and \a = \n}

     {\node[color=black] ({\a,\b,5})at ({\a*2*\Pab},{\b*2*\Pab}) {$\circ$};}

    {\node[color=black] ({\a,\b}) at ({\a*2*\Pab+\Pab},{\b*2*\Pab+\Pab}) {$\circ$};

    \node[color=black] ({\a,\b,5})at ({\a*2*\Pab},{\b*2*\Pab}) {$\circ$};

      }

      }

      }

    }

    }

\foreach \s in {0,...,\n}{

\foreach \t in {0,...,\m}{  

 \ifthenelse{\t = \m \and \s < \n}{

    \draw[->] (\s*2*\Pab+\ab,\t*2*\Pab+\ab) to (\s*2*\Pab+\Pab-\ab,\t*2*\Pab+\Pab-\ab); 

    \draw[->] (\s*2*\Pab+\Pab+\ab,\t*2*\Pab+\Pab-\ab) to (\s*2*\Pab+2*\Pab-\ab,\t*2*\Pab+\ab); 

 }{

   \ifthenelse{\s = \n \and \t < \m}{

    %\draw[->] (\s*2*\Pab+\Pab+\ab,\t*2*\Pab+\Pab+\ab) to (\s*2*\Pab+2*\Pab-\ab,\t*2*\Pab+2*\Pab-\ab);

   %\draw[->] (\s*2*\Pab+\ab,\t*2*\Pab+2*\Pab-\ab) to (\s*2*\Pab+\Pab-\ab,\t*2*\Pab+\Pab+\ab);  

   %\draw[->] (\s*2*\Pab+\ab,\t*2*\Pab+\ab) to (\s*2*\Pab+\Pab-\ab,\t*2*\Pab+\Pab-\ab); 

   }

  {

  \ifthenelse{\s = \n \and \t = \m}{

    }{

   \draw[->] (\s*2*\Pab+\ab,\t*2*\Pab+\ab) to (\s*2*\Pab+\Pab-\ab,\t*2*\Pab+\Pab-\ab); 

   \draw[->] (\s*2*\Pab+\Pab+\ab,\t*2*\Pab+\Pab+\ab) to (\s*2*\Pab+2*\Pab-\ab,\t*2*\Pab+2*\Pab-\ab);

   \draw[->] (\s*2*\Pab+\ab,\t*2*\Pab+2*\Pab-\ab) to (\s*2*\Pab+\Pab-\ab,\t*2*\Pab+\Pab+\ab); 

   \draw[->] (\s*2*\Pab+\Pab+\ab,\t*2*\Pab+\Pab-\ab) to (\s*2*\Pab+2*\Pab-\ab,\t*2*\Pab+\ab);    

   }

   }

   }

    }

    }

\draw[->] (\n*2*\Pab+\ab,\m*\Pab+2*\Pab+\Pab+\Pab+\ab) to (\n*2*\Pab+\Pab-\ab,\m*\Pab+2*\Pab+\Pab+\Pab+\Pab-\ab);

\ifthenelse{\isodd{\m}}

%% IF

 { 

  \node[color=black] (Dots1) at (0,\m*\Pab+2*\Pab+\Pab) {$\cdots$};

  \node[color=black] (Dots2) at (1+\n*2*\Pab,\m*\Pab+2*\Pab+\Pab) {$\cdots$};

   \ifthenelse{\isodd{\n}}{

  \node[color=black] (Dots3) at (-0.2*\n*2*\Pab,2*\m*\Pab+3*\Pab) {$\vdots$};}

  {\node[color=black] (Dots4) at (0.75*\n*2*\Pab,2*\m*\Pab+2*\Pab) {$\vdots$};} 

  }

%% Else

  {

  \node[color=black] (Dots1) at (0,\m*\Pab+\Pab) {$\cdots$};

  \node[color=black] (Dots2) at (1+\n*2*\Pab,\m*\Pab+\Pab) {$\cdots$};

  \ifthenelse{\isodd{\n}}{

  \node[color=black] (Dots3) at (0*\n*2*\Pab,2*\m*\Pab+3*\Pab) {$\vdots$};}

  {\node[color=black] (Dots4) at (0.25*\n*2*\Pab,2*\m*\Pab+2*\Pab) {$\vdots$};}

  }

\ifthenelse{\translation = 1}{

   \foreach \s in {0,...,\n}{

   \foreach \t in {0,...,\m}{ 

   \ifthenelse{\s = 0}{}{

      \ifthenelse{\s = \n}{\draw[->,dotted,thin] (\s*2*\Pab-\ab,\t*2*\Pab) to (\s*2*\Pab-2*\Pab+\ab,\t*2*\Pab); }{

   \draw[->,dotted,thin] (\s*2*\Pab-\ab,\t*2*\Pab) to (\s*2*\Pab-2*\Pab+\ab,\t*2*\Pab); 

   \draw[->,dotted,thin] (\s*2*\Pab-\ab+\Pab,\t*2*\Pab+\Pab) to (\s*2*\Pab-2*\Pab+\Pab+\ab,\t*2*\Pab+\Pab); 

\draw[->,dotted,thin] (\n*2*\Pab-\ab+\Pab,\t*2*\Pab+\Pab) to (\n*2*\Pab-2*\Pab+\Pab+\ab,\t*2*\Pab+\Pab); 

   }

   }}

}}

{}  %ELSE

                                                                                                                                                                                             \end{tikzpicture}

\end{center}

\caption{Regular component  $\mathbb{Z}A_{\infty}.$}

\label{Fig:RCGM}

\end{figure}

\subsection{Graded Kronecker modules}

Let $M\in$ mod$(T(n) ,\Omega)$ be  indecomposable.  Then $T(M)=$ supp$(M)$ is a finite tree, we can  define

\begin{center}
$d(M)=d(T(M))$,  $c(M)=c(T(M))$ and $r(M)=r(T(M))$. 
\end{center}
We call them   \textit{diameter}, \textit{center},  and  \textit{radius}  of $M$, respectively  \cite[2.4]{Claus}.  We say that 
\begin{enumerate}
\item  $M$ is said to be a \textit{sink}  module,        provided diameter walks of $T(M)$ start and end in sinks;

\item $M$ is said to be a  \textit{source} module,  provided diameter walks of $T(M)$ start and end in sources;

\item $M$ is said to be a \textit{flow} module,  provided length of diameter walk of $T(M)$ is odd.
 
\end{enumerate}

\subsection{Index and Balls}\label{Ball}

 For any $ x, c, c_1, c_2\in T(n)_0$ and $r\in \mathbb{N}$ \cite[2.3]{Claus}, let
\begin{center}
$B_r(c):=\{x\in T(n)_0 \mid d(x,c) \leq r\}$, 

$B_r(c_1,c_2):=\{x \in T(n)_0\mid d(x, c_i)\leq r\}$, $i\in \{1,2\}$.
\end{center}
We call such sets $B_r(c)$ and $B_r(c_1, c_2)$ the \textit{balls} with radius $r$ and with center $c$ or $\{c_1, c_2\}$, respectively. We sometimes take these balls $B_r(c)$ and $B_r(c_1, c_2)$ as subquivers of $T(n)$.

 \section{ graded modules of constant rank}\label{Matrice}

Let $M\in \modd(T(n),\Omega)$ be an  indecomposable module, and let  $N=\pi_\lambda(M)$, $n\geq 3$. We now focus on the modules $M$ and $N$.   Let  $c$ (or $\{c_1,c_2\}$) and $r$ be the center and radius of $T(M)$, respectively.     We have $\supp(M)\subseteq B_r(c)$ ( or $ \supp(M)\subseteq B_{r+1}(c_1,c_2))$. Hence we can always find a minimal ball $B(M)$ such that $\supp(M)\subseteq B(M)$. Clearly, the  sets $T(M)$ and $B(M)$ have the same radius and same center.    Note that

\begin{equation*} 
N_1=\pi_\lambda(M)_1=\bigoplus _{\pi(y)=1}M_{y},  N_2=\pi_\lambda(M)_2=\bigoplus _{\pi(y)=2}M_{y},  
\end{equation*}

\begin{equation*} 
N(\gamma_i)=\bigoplus _{\pi(\gamma^j_i)=\gamma_i} M(\gamma^j_i):\pi_\lambda (M)_1\rightarrow \pi_\lambda (M)_2, 
\end{equation*}
where $y\in T(n)_0$,  $j\in \mathbb{N}$,   and $ i\in\{1,2,\cdots,n\}$. For convenience, we let $M(\gamma^j_i)=\bg^j_i$ and $N(\gamma_i)=\bg_i$ for all $1\leq i\leq n$.  By abusing the notation, we sometimes use $\bg^j_i$ to denote its  corresponding matrix (linear map, or arrow $\gamma^j_i$) in $T(M)$, and we let $\bg^j_i.v=\bg^j_i v$, where the latter $\bg^j_i$ is a matrix and $v\in M_{s(\bg^j_i)}$.

 Recall that  $\mathcal{N}_M(a)=\mathcal{N}(a) \cap T(M)_0$ for any $a\in T(M)_0$.    Suppose that $\supp(M)=B(M)$. If $p=(a_0,a_1,\cdots,a_{d(M)})$ is a diameter walk of $T(M)$, then we  have $\mid\cN_M(a_{i})\mid=n$ when $d(M)\geq 2, 1\leq i \leq d(M)-1$.
 We define
\begin{center}
 $ \Gamma_M:=\sum^n_{i=1}(\bigoplus _{\pi(\gamma^j_i)=\gamma_i} M(\gamma^j_i))=\sum^n_{i=1}(\bigoplus_{\pi(\gamma^j_i)=\gamma_i} \bg^j_i)=\sum^n_{i=1}N(\gamma_i)=\sum^n_{i=1}\bar{\gamma}_i$.   
\end{center} 

  We now fix two bases of the vector spaces $\{M_y\mid \pi(y)=1\}$ and $\{M_y\mid \pi(y)=2\}$, and we want to discuss the matrix $\Gamma_M$ when $T(M)=B(M)$.  We use induction on the diameter $d(M)$ of $T(M)$, and we always assume that the walk $p=(a_0,a_1,\cdots, a_{d(M)})$ is a diameter walk of $T(M)$. Assume that $d(M)>0$.  If $d(M)=1$, then  $M$ is a flow module and there only exists one non-trivial arrow $\gamma^j_i:a_0\rightarrow a_1\in T(M)_1$ with $c(M)=\{a_0,a_1\}$ and $r(M)=0$.  Note that the module $M$ has nothing to do with the indices $i,j$ in $\gamma^j_i$. Then we can  start from  smaller number, and let $i=j=1$ (we do the same thing in the following).  Hence we have 
\begin{center}
$\Gamma_M=\bg_1=\bg^1_1$, where $N_1=M_{s(\bg^1_1)}$.
\end{center}
Suppose that $d(M)=2$. Then $M$ is a sink module or a source module, and we get $\mid \cN_M(a_1)\mid =n$ by $T(M)=B_1(a_1)$, i.e.,   there exist non-trivial maps $\bg^1_1, \bg^1_2,\cdots, \bg^1_n$ in $M$. If  $M$ is a sink module,  then we have $s(\bg^1_i)=s(\bg^1_j)$ for $1\leq i\neq j\leq n$ and 
\begin{center}
 $N_1=M_{s(\bg^1_i)}$, $N_2=\bigoplus^n_{i=1} M_{t(\bg^1_i)}$.
\end{center}
Hence we have 

\begin{center}
$\bg_1= \begin{bmatrix}
\bg^1_1 \\ 
0\\
\vdots \\
0
\end{bmatrix}$, \quad $\bg_2= \begin{bmatrix}
0 \\ 
\bg^1_2\\
0\\
\vdots \\
0
\end{bmatrix} $, \quad $\cdots$ ,  \quad $\bg_n= \begin{bmatrix}
 
0\\
\vdots \\
0\\
\bg^1_n
\end{bmatrix} $. 
\end{center}
Then $\Gamma_M$ would be a block matrix as the following

\begin{center}
\begin{equation*}
\Gamma_M=\sum^{n}_{i=1}\bg_i=\begin{bmatrix}
\bar{\gamma}^1_{1}\\
\bar{\gamma}^1_{2}\\
\vdots \\
\bar{\gamma}^1_{n}
\end{bmatrix}, \text{ where }t(\bg^1_i)=a_1, 1\leq i\leq n.
\end{equation*}
\end{center} 
When $M$ is a source module, similarly, we have
\begin{center}
$\Gamma_M= \sum^{n}_{i=1}\bar{\gamma_i}=\begin{bmatrix}
\bar{\gamma}^1_{1} &  \bar{\gamma}^1_{2}& \cdots & \bar{\gamma}^1_{n}
\end{bmatrix}$, where $s(\bg^1_i)=a_1, 1\leq i\leq n$.
\end{center}
Note that we only need to discuss the matrix $\Gamma_M$ when $M$ is a sink  module,  and we  can  take it as a transpose of a matrix   when $M$ is a source module. From now on,  we always assume that $M$ is a sink module when its diameter $d(M)$ is even.  If $d(M)=3$, then $M$ is a flow module, and we have $\mid \cN_M(a_1)\mid=\mid \cN_M(a_2)\mid =n$. That is to say, there exist non-trivial maps $\bg^1_1,\cdots, \bg^1_n$ and $\bg^2_1,\cdots,\bg^2_{n-1}$ with $s(\bg^1_i)=a_1$ and $t(\bg^2_j)=a_2$, $1\leq i\leq n$, $1\leq j\leq n-1$. Then 

\begin{center}

$\Gamma_M=\sum^n_{i=1}\bg_i: M_{s(\bg^1_i)}\bigoplus (\bigoplus^{n-1}_{j=1}M_{s(\bg^2_j)})\rightarrow (\bigoplus^n_{i=1}M_{t(\bg^1_i)})\bigoplus M_{t(\bg^2_2)}$.
\end{center}
Suppose that  $t(\bg^1_n)=a_2$, i.e.,  $\bg^1_n= M(\gamma^1_n) : M_{a_1}\rightarrow M_{a_2}$. Then we have

\begin{center}
\begin{equation*}
\Gamma_M=\begin{bmatrix}
\bar{\gamma}^1_{1} & 0 & \cdots & 0\\
\vdots &  \vdots & \ddots & \vdots\\
\bar{\gamma}^1_{n-1} & 0 & \cdots & 0\\
\bar{\gamma}^1_{n} & \bar{\gamma}^2_{1} & \cdots & \bar{\gamma}^2_{n-1}
\end{bmatrix}.
\end{equation*}
\end{center}
Suppose that $d(M)=4$. Without loss of generality, we assume that $\mid \cN_M(t(\bg^2_i))\mid=n$ and $c(M)=t(\bg^2_i)=a_2, 1\leq i\leq n-1$. Then we have

\begin{center}
$\Gamma_M= M_{s(\bg^1_i)}\bigoplus (\bigoplus^{n-1}_{j=1}M_{s(\bg^2_j)})\rightarrow (\bigoplus^n_{i=1}M_{t(\bg^1_i)})\bigoplus M_{t(\bg^2_2)}\bigoplus (\bigoplus^{n-1}_{i=1} M_{t(\bg^3_i)}) \bigoplus \cdots \bigoplus (\bigoplus^{n-1}_i M(t(\bg^{n+1}_i))$,
\end{center}
where $s(\bg^{i+2}_j)=s(\bg^2_i)$, $1\leq i\leq n-1$, $1\leq j\neq i\leq n$. That is,

\begin{center}
$\Gamma_M=\begin{bmatrix}
\bar{\gamma}^1_{1} & 0 & 0 &  \cdots & 0\\
\vdots &  \vdots & \vdots & \ddots & \vdots\\
\bar{\gamma}^1_{n-1} & 0 & 0&  \cdots & 0\\
\bar{\gamma}^1_{n} & \bar{\gamma}^2_{1} & \bg^2_2&  \cdots & \bar{\gamma}^2_{n-1}\\
0 & \bg^3_2 & 0&  \cdots & 0\\
0 & \bg^3_3 & 0&  \cdots & 0 \\
\vdots & \vdots &  \vdots& \ddots & \vdots \\
0 & \bg^3_n & 0 & \cdots & 0 \\
0 & 0& \bg^4_1 & \cdots & 0\\
\vdots & \vdots& \vdots & \ddots & 0\\
0 & 0 & \bg^4_n & \cdots & 0\\
0& 0 &  0 & \cdots & \bg^{n+1}_1\\
\vdots&  \vdots & \vdots & \ddots& \vdots\\
0& 0&  0& \cdots& \bg^{n+1}_{n}

\end{bmatrix}$
\end{center}
Clearly, we can continue adding the vector spaces in $N_1$ or $N_2$ and  finding the matrices $\bg^j_i$ in the block matrix $\Gamma_M$ using the induction on the diameter $d(M)$. Finally, we can see that every row and every column have $n$ matrices in $\Gamma_M$ except the leaves in $T(M)$ when $T(M)=B(M)$.

Let   $\cA_i=\{\bg^j_i\mid \gamma^j_i\in T(M)_1 \}$, i.e., the set $\cA_i$ is the set of direct summands of the map $\bg_i$, $1\leq i\leq n$. Then 

\begin{proposition}\label{position}
Let $M\in \modd(T(n),\Omega)$ be an indecomposable module, and let $(a_0,a_1,\cdots,a_{d(M)})$ be a diameter walk of $T(M)$ with $d(M)\geq 2$. Suppose that $T(M)=B(M)$. Then the matrix $\Gamma_M$ would look like the following
\begin{center}

$\Gamma_M=\sum^n_{i=1}\bg_i = \begin{bmatrix}
\bg^1_1 & 0\\
\bg^1_2 & 0\\
\vdots & 0\\
\bg^1_{n-1} & 0 \\
\bg^1_n & \Gamma'_M \\

\end{bmatrix}$ or  $\Gamma_M=\sum^n_{i=1}\bg_i = \begin{bmatrix}
\bg^1_1 & \bg^1_2 & \cdots & \bg^1_{n-1} & \bg^1_n \\
0 & 0 & \cdots & 0  & \Gamma'_M 
\end{bmatrix}$,
\end{center}
where  $\Gamma'_M=\sum^{n}_i \bg'_i$, $\bg'_i=\bigoplus_{\bg^j_i\in \cA'_i}\bg^j_i$, $\cA'_i=\cA_i\setminus \{\bg^1_i\}$,   $\bg^1_i$ is the linear map between  $M_{a_1}$ and its neighbours $M_b$, $b\in \cN_M(a_1)$, $1\leq i\leq n$.

\end{proposition}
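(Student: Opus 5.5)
The plan is to read $\Gamma_M$ as a block matrix indexed by vertices and to fix the orderings of the bases of $N_1=\bigoplus_{\pi(y)=1}M_y$ and $N_2=\bigoplus_{\pi(y)=2}M_y$ so that the vertices near $a_0$ come first. Concretely, $\Gamma_M=\sum_i\bg_i$ is the direct sum of the maps $\bg^j_i$ placed into $\operatorname{Hom}(N_1,N_2)$, so its $(y,x)$ block, for $x$ a source and $y$ a sink, is $\bg^j_i$ if there is an arrow $\gamma^j_i\colon x\to y$ in $T(M)$ and $0$ otherwise. Since $T(M)$ is a tree, each pair $(x,y)$ is joined by at most one arrow, so every block is either a single $\bg^j_i$ or zero. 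The proposition is then a statement about which vertices are adjacent to which.

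I would split into two cases according to whether $a_1$ is a source or a sink. In the source case I list $N_1$ as $M_{a_1}\oplus N_1'$ and $N_2$ as $M_{b_1}\oplus\cdots\oplus M_{b_{n-1}}\oplus M_{a_2}\oplus N_2'$. Here $\cN_M(a_1)=\{b_1,\dots,b_{n-1},a_2\}$, so $|\cN_M(a_1)|=n$; this follows from $T(M)=B(M)$, as noted just before the definition of $\Gamma_M$. The primes denote the remaining vertices. I label the arrows so that $a_1\to b_i$ is $\gamma^1_i$ for $i\le n-1$ and $a_1\to a_2$ is $\gamma^1_n$. This relabelling is allowed because the module does not depend on the indices $i,j$, as already used for $d(M)\le 4$.

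The key step is Lemma \ref{l}: every $b\in\cN'(a_1)=\cN_M(a_1)\setminus\{a_2\}$ is a leaf of $T(M)$, so $b_1,\dots,b_{n-1}$ have $a_1$ as their only neighbour in $T(M)$. Hence in the rows of $b_1,\dots,b_{n-1}$ the only nonzero block is $\bg^1_i$ in the column of $a_1$. In the column of $a_1$ the nonzero blocks are exactly $\bg^1_1,\dots,\bg^1_n$, the last one sitting in row $a_2$. Everything remaining, namely rows $a_2$ and $N_2'$ against columns $N_1'$, is the sum of the maps $\bg^j_i$ with $\gamma^j_i$ not incident to $a_1$. That sum is $\Gamma'_M=\sum_i\bg'_i$ with $\cA'_i=\cA_i\setminus\{\bg^1_i\}$, which gives the first displayed shape. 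The sink case, where $M_{a_1}$ is a summand of $N_2$ and its neighbours are sources, gives the transposed second shape by the same argument with rows and columns swapped.

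The main obstacle is bookkeeping rather than mathematics: the statement implicitly identifies $\bg^1_i$ with the map at $a_1$, while $\cA_i$ may contain other arrows with the same $i$. I would handle this by choosing the labelling so that the arrows at $a_1$ carry superscript $1$ and all others carry superscript $j\ge2$. I also need to check that the row of $a_2$ is shared between the $\bg^1_n$ block and $\Gamma'_M$, which is exactly what the displayed matrix shows. The case $d(M)=2$ is consistent with this: there $a_2$ is itself a leaf, $\Gamma'_M$ is empty or zero, and the shape reduces to the column or row computed earlier.
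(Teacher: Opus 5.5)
Your argument is correct, but it is organised differently from the paper's. The paper gives no separate proof. Its justification builds $\Gamma_M$ up by induction on $d(M)$, writes the block matrices out explicitly for $d(M)=1,2,3,4$, and asserts that one can ``clearly continue'' adding vertex spaces to $N_1$ and $N_2$. It concludes that every non-leaf row and column of $\Gamma_M$ carries $n$ blocks when $T(M)=B(M)$. You instead argue in one step for every diameter. You read $\Gamma_M$ as the vertex-indexed block matrix whose $(y,x)$ block is the unique arrow map between $x$ and $y$, or zero. The shape around $M_{a_1}$ is then forced by $|\cN_M(a_1)|=n$ together with Lemma \ref{l}, which the paper's discussion does not cite.

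Your route turns the paper's ``clearly we can continue'' into an actual proof and isolates exactly what is used:
the tree structure (at most one arrow between two vertices);
the hypothesis $T(M)=B(M)$, used only to ensure that all $n$ blocks $\bg^1_1,\dots,\bg^1_n$ are present;
and Lemma \ref{l}, for the zero blocks in the rows (resp.\ columns) of the leaves $b_i$.

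The paper's inductive build-up, in exchange, displays the explicit global shape of $\Gamma_M$ for small diameters, including the internal structure of $\Gamma'_M$. Compare (\ref{og}) and (\ref{bmatrix}) in the proof of Theorem \ref{es}; this is what the later arguments actually manipulate. The proposition itself records only the first layer around $a_1$, and for that your direct argument suffices.

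Your relabelling that puts the edge $\{a_1,a_2\}$ in colour $n$ is purely notational. Each vertex of $T(n)$ carries exactly one arrow of each colour, and $\Gamma_M=\sum_i\bg_i$ is invariant under permuting colours.
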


We recall an old result.

\begin{Lemma}\cite [Theorem 19, $(8.1), (8.2), (8.7)$]  {George}\label{matrix}  Ranks of Partitioned Matrices. For matrices over an arbitrary
field:

\begin{center}
 $\rk \begin{bmatrix}
A & B
\end{bmatrix}  \geq  \rk A$,  $\rk \begin{bmatrix}
A \\
B
\end{bmatrix}\geq \rk A$,

 $\rk \begin{bmatrix}
X & 0 \\
S & Y
\end{bmatrix}\geq \rk X+ \rk Y$.

\end{center}

\end{Lemma}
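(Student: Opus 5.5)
The first two inequalities come from comparing column and row spaces, and the third from exhibiting an explicit nonsingular square submatrix of the block matrix.

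For $\rk\begin{bmatrix} A & B\end{bmatrix}\geq \rk A$, I would argue via column spaces. The columns of $A$ are among the columns of $\begin{bmatrix} A & B\end{bmatrix}$, so the column space of $A$ is a subspace of the column space of $\begin{bmatrix} A & B\end{bmatrix}$. Since rank equals the dimension of the column space, the inequality follows. The second inequality, $\rk\begin{bmatrix} A \\ B\end{bmatrix}\geq \rk A$, then follows by transposing: $\begin{bmatrix} A \\ B\end{bmatrix}^{T}=\begin{bmatrix} A^{T} & B^{T}\end{bmatrix}$, and rank is invariant under transposition. Equivalently, one can say that the rows of $A$ are among the rows of $\begin{bmatrix} A \\ B\end{bmatrix}$.

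For the block triangular estimate, I would use the characterization of rank as the largest size of a nonsingular square submatrix, which is valid over an arbitrary field.

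1. Let $r=\rk X$ and $s=\rk Y$. Choose row indices $I$ and column indices $J$ of $X$ with $|I|=|J|=r$ such that the submatrix $X_0=X_{I,J}$ is invertible.
2. Similarly choose $I'$ and $J'$ with $|I'|=|J'|=s$ such that $Y_0=Y_{I',J'}$ is invertible.
3. In the big matrix $\begin{bmatrix} X & 0 \\ S & Y\end{bmatrix}$, take the rows indexed by $I$ in the top block together with $I'$ in the bottom block, and the columns indexed by $J$ in the left block together with $J'$ in the right block. The resulting $(r+s)\times(r+s)$ submatrix is $\begin{bmatrix} X_0 & 0 \\ S_0 & Y_0\end{bmatrix}$, where $S_0=S_{I',J}$.
4. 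This submatrix is block lower triangular, so its determinant is $\det X_0\cdot\det Y_0\neq 0$. Hence the rank of the big matrix is at least $r+s$.

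If $r=0$ or $s=0$, the corresponding block is simply omitted and the argument still goes through.

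The only point needing care is the third inequality: one must make sure that the chosen index sets really produce the zero block in the upper right corner. This holds because the top rows of the big matrix have zeros in every column of the right block. Everything else is routine linear algebra. Alternatively, the third inequality could be proved by reducing it to the first two via column operations, but the minor argument above is cleaner.
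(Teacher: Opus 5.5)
Your proof is correct. The first two inequalities are just containment of column (resp.\ row) spaces. In the third, your choice of rows $I\cup I'$ and columns $J\cup J'$ does cut out the block lower triangular minor $\begin{bmatrix} X_0 & 0\\ S_0 & Y_0\end{bmatrix}$ with determinant $\det X_0\det Y_0\neq 0$. That suffices, since determinantal rank equals rank over any field.

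The paper itself gives no argument for this lemma; it simply quotes Marsaglia and Styan. In that source, inequalities of this kind are corollaries of exact rank identities for partitioned matrices, for instance
$\rk\begin{bmatrix} X & 0\\ S & Y\end{bmatrix}=\rk X+\rk Y+\rk\bigl((I-YY^{-})S(I-X^{-}X)\bigr)$ with $X^{-},Y^{-}$ generalized inverses. The inequality is then just nonnegativity of the last term.

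Your minor argument is shorter and fully self-contained. What the identity-based route adds is an exact description of the defect: it gives a criterion for equality $\rk\begin{bmatrix} X & 0\\ S & Y\end{bmatrix}=\rk X+\rk Y$, namely $S=YU+VX$ for some matrices $U,V$. That is the kind of rank additivity the paper later invokes in its computations with $\Gamma_M$. For the lemma as stated, however, your argument is all that is needed.
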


\quad

 For any $\alpha \in k^n \setminus \{0\}$ and any $M\in$  mod $\mathcal{K}_n$, we define 
 \begin{center}
  $x_{\alpha}:= \alpha_1\gamma_1+\cdots+ \alpha_n\gamma_n$.
 \end{center}
We denote by
\begin{center}
 $x^{\alpha}_{M}: M \rightarrow M$
\end{center}
 the linear operator associated to $x_{\alpha}$: the left multiplication with $x_{\alpha}$.

\begin{Definition}
Let $M\in\modd \mathcal{K}_n$ be an indecomposable module.    Then $M$ is said to have the \textit{equal kernels property},  provided  $\ker x^{\alpha}_{M}$ is independent of $\alpha$ for all $\alpha \in k^n \setminus \{0\}$. Further, $M$ is said to have the \textit{equal images property}, provided $\im x^{\alpha}_{M}$ is independent of $\alpha$.
\end{Definition}

Let $M\in$ mod $\mathcal{K}_n$ be indecomposable and non-simple. General theory tells us that 
\begin{center}
$\bigcap^n_{i=1} \ker(x^{e_i}_{M}) = M_2$ and $\sum^n_{i=1} \im(x^{e_i}_{M}) = M_2$,
\end{center}
where $e_i$ is the $i$-th canonical coordinate, i.e., the $i$-th position is $1$ and others are zero. In particular,  module $M$ has the equal kernels property if and only if $M_2 = \ker(x^{\alpha}_{M})$ for all $\alpha \in k^n \setminus \{0\}$, and has the equal images property if and only if $M_2 = \im(x^{\alpha}_{M})$ for all non-zero $\alpha$ \cite[Lemma 2.20]{Daniel}. Hence   we  can define two sets

\[ \EKP := \{M \in \modd \mathcal{K}_n \text{ does not have direct summands of type } S(1) \mid  \forall \alpha \in k^n \setminus (0): \ker(x^{\alpha}_{M}) =  M_2 \},\]

\[ \EIP := \{M \in  \modd \mathcal{K}_n \text{ does not have direct summands of type }S(2) \mid  \forall \alpha \in k^n \setminus (0): \im(x^{\alpha}_{M}) =  M_2 \}.\]

Then we have the definition.

\begin{Definition}
Let $N\in \text{mod }\mathcal{K}_n$. We say that the module   $N$ is of \textit{constant rank} if the rank of $x^{\alpha}_N$ is independent of $\alpha\in k^n\setminus \{0\}$. Similarly,  an indecomposable module $M\in \modd(T(n), \Omega)$ is said to be of constant rank if $\pi_\lambda(M)$ is of constant rank.                                                                                                                                                                         
\end{Definition}

Let $M\in \modd(T(n),\Omega)$ be an indecomposable module, and let  $\Gamma^{\alpha}_M=x^{\alpha}_{\pi_\lambda(M)}=\sum^{n}_{i=1}\alpha_i \bg_i$ for any $\alpha\in k^n\setminus\{0\}$. According to Proposition \ref{position}, we have

\begin{center}

$\Gamma^{\alpha}_M=\sum^n_{i=1}\alpha_i\bg_i = \begin{bmatrix}
\alpha_1\bg^1_1 & 0\\
\alpha_2\bg^1_2 & 0\\
\vdots & 0\\
\alpha_{n-1}\bg^1_{n-1} & 0 \\
\alpha_n\bg^1_n & \Gamma^{\alpha'}_M \\

\end{bmatrix}$ or  $\Gamma^{\alpha}_M= \begin{bmatrix}
\alpha_1\bg^1_1 & \alpha_2\bg^1_2 & \cdots & \alpha_{n-1}\bg^1_{n-1} & \alpha_n\bg^1_n \\
0 & 0 & \cdots & 0  & \Gamma^{\alpha'}_M 
\end{bmatrix}$,
\end{center}
where  $\Gamma^{\alpha'}_M=\sum^{n}_i \alpha_i \bg'_i$, $\bg'_i=\bigoplus_{\bg^j_i\in \cA'_i}\bg^j_i$, $\cA'_i=\cA_i\setminus \{\bg^1_i\}$,   $\bg^1_i$ is the linear map between  $M_{a_1}$ and its neighbours $M_b$, $b\in \cN_M(a_1)$, $1\leq i\leq n$. If we let $\alpha_i\bg^j_i=\beta^j_i$ and $\alpha_i\bg_i=\beta_i$, then we can see that   $\rk \Gamma^{\alpha}_M=\rk(\sum^n_{i=1}\alpha_i\bg_i)=\rk(\sum^n_{i=1}\beta_i) =\rk \Gamma_{M'}=\rk \Gamma_M$, where 
$\Gamma_{M'}=\begin{bmatrix}
\beta^1_1 & 0\\
\beta^1_2 & 0\\
\vdots & 0\\
\beta^1_{n-1} & 0 \\
\beta^1_n & \Gamma_{M''} \\

\end{bmatrix}$ and $\Gamma_{M''}= \Gamma^{\alpha'}_M$. 
  Hence it is enough to focus on the matrix $\Gamma_M$ when each  $\alpha_i$ is not zero for all $1\leq i\leq n$.  We have the subcategory
\begin{center}
$ \CR:= \{M\in \modd \mathcal{K}_n\mid \text{there is } r_M\in \mathbb{N}_0 \text{ such that rk } x^{\alpha}_M=r_M \text{ for all } \alpha\in k^n\setminus \{0\} \}$.
\end{center}
 There exist  $\EKP \cap \EIP=(0)$ and $\EIP \cup \EKP\subseteq \CR$ \cite[2.4]{Julia}.  On the other hand, we have

\begin{center}
$\Inj := \{M\in\modd(T(n), \Omega)\mid \forall \delta \in (T(n))_1: M(\delta)$ is injective $\}$,

$\Sur:=\{M\in \modd(T(n), \Omega)\mid \forall \delta \in (T(n))_1: M(\delta)$ is surjective $\}.$ 

$\GR:=\{M\in \modd(T(n),\Omega)\mid \pi_\lambda(M)\text{ is of constant rank}\}$.

\end{center}
Let $M\in$ mod$(T(n), \Omega)$ be regular indecomposable. According to \cite[Theorem 8.1]{Daniel}, we know that $M\in\Inj$ if and only if $M\in\EKP $, and we have the following conclusions.

\begin{Theorem} \cite[Theorem 3.7]{Carlson}\label{DS}
 Let $M\in \modd(T(n),\Omega)$ be of constant rank. Then any direct summand of $M$ is also of  constant rank. 

\end{Theorem}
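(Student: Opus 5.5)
The plan is to reduce everything to additivity of ranks together with lower semicontinuity of the rank function on the irreducible variety $k^n\setminus\{0\}$, which is the argument behind \cite[Theorem 3.7]{Carlson}. Write $M=\bigoplus_{j=1}^{s}M^{(j)}$ with $M^{(j)}\in\modd(T(n),\Omega)$. The push-down $\pi_\lambda$ is additive, since it is defined vertexwise and arrowwise by direct sums. Hence $\pi_\lambda(M)\cong\bigoplus_j\pi_\lambda(M^{(j)})$, and for every $\alpha\in k^n\setminus\{0\}$ the operator $x^{\alpha}_{\pi_\lambda(M)}$ is block diagonal with blocks $x^{\alpha}_{\pi_\lambda(M^{(j)})}$. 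This gives
\[
\rk x^{\alpha}_{\pi_\lambda(M)}=\sum_{j=1}^{s}\rk x^{\alpha}_{\pi_\lambda(M^{(j)})}\quad\text{for all }\alpha\neq 0 .
\]

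Next I fix $j$ and consider $\rho_j(\alpha):=\rk x^{\alpha}_{\pi_\lambda(M^{(j)})}$. After fixing bases, $x^{\alpha}$ is a matrix whose entries are linear forms in $\alpha_1,\dots,\alpha_n$. So for each $t$, the set $\{\alpha\mid\rho_j(\alpha)\geq t\}$ is the complement of the common zero locus of the $t\times t$ minors, and is therefore Zariski open in $k^n$. Let $g_j$ be the maximal value of $\rho_j$ on $k^n\setminus\{0\}$. Then $U_j:=\{\alpha\neq0\mid\rho_j(\alpha)=g_j\}$ is a nonempty Zariski open subset of $k^n$, and $\rho_j(\alpha)\leq g_j$ for all $\alpha$.

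Since $k$ is algebraically closed, hence infinite, $k^n$ is irreducible. So the finite intersection $U:=\bigcap_j U_j$ is nonempty. Pick $\alpha_0\in U$. Constant rank of $\pi_\lambda(M)$ says $\rk x^{\alpha}_{\pi_\lambda(M)}=r_M$ for all $\alpha\neq0$. Evaluating at $\alpha_0$ gives $r_M=\sum_j g_j$. Then for an arbitrary $\alpha\neq 0$ we get $\sum_j\rho_j(\alpha)=\sum_j g_j$ with each $\rho_j(\alpha)\leq g_j$, which forces $\rho_j(\alpha)=g_j$ for every $j$. Hence each $\pi_\lambda(M^{(j)})$ is of constant rank, i.e.\ each $M^{(j)}$ is of constant rank. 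The same argument works verbatim in $\modd\cK_n$.

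The only step needing care is the semicontinuity and irreducibility input: showing that the maximal rank is attained on a \emph{nonempty open} set, and that finitely many such sets meet. This is where the hypothesis that $k$ is algebraically closed (infinite suffices) enters. I would make it explicit via a single nonzero $g_j\times g_j$ minor, viewed as a nonzero polynomial in $\alpha$, and use that a product of finitely many nonzero polynomials over an infinite field has a nonvanishing point. Everything else is bookkeeping with block-diagonal matrices.
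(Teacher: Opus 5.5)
Your proof is correct. The paper gives no argument of its own for this statement: it imports it from \cite[Theorem 3.7]{Carlson}, which is a result about modules of constant Jordan type over finite group schemes, and it does not explain how that result carries over to $\modd(T(n),\Omega)$.

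Your proof supplies that transfer directly. Additivity of $\pi_\lambda$ makes $x^{\alpha}_{\pi_\lambda(M)}$ block diagonal, so the ranks of the summands add. Lower semicontinuity of $\alpha\mapsto\rk x^{\alpha}_{\pi_\lambda(M^{(j)})}$, together with irreducibility of $k^n$, then forces every summand to attain its generic rank at every nonzero $\alpha$. Only the fact that $k$ is infinite is needed here.

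Compared with the citation, your argument has three advantages:
\begin{itemize}
\item it is self-contained;
\item it uses nothing about ranks of higher powers or about $\pi$-points;
\item it proves closure of $\CR\subseteq\modd\cK_n$ under direct summands outright, and the statement for $\GR$ follows by applying $\pi_\lambda$ to a decomposition $M\cong X\oplus Y$.
\end{itemize}
The citation, for its part, is shorter but leaves this reduction to the reader.

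One cosmetic point concerns your explicit version using a product of nonzero minors: you must make sure the chosen point $\alpha_0$ is nonzero. This is automatic when some $g_j>0$, because those minors are homogeneous of positive degree and so vanish at $0$. When all $g_j=0$ there is nothing to prove.
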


Let $M\in \modd(T(n),\Omega)$ be an indecomposable module, and  let $N=\pi_\lambda(M)$. From now on, we always assume that  $M$  is indecomposable and  of constant rank if we do not mention it specifically in this section. Suppose that $N$ is not zero or simple. Then we have $d(M)\geq 1 $.  Let 
\begin{center}
$\cS_i=\{s(\gamma^j_i)\mid \pi(\gamma^j_i)=\gamma_i \}$, $\cT_i=\{t(\gamma^j_i)\mid \pi(\gamma^j_i)=\gamma_i\}$, $1\leq i\leq n$.

\end{center}

Then we have

\begin{Lemma}\label{ddpoint}
Suppose that $M\in \modd(T(n),\Omega)$ is an  indecomposable and of constant rank module. Then  for the sets $\cS_i, \cT_i$ and every arrow $\gamma^q_j\in T(M)_1$ there exist
\begin{center}
$ s(\gamma^{q}_j)\in \cS_i$ or  $t(\gamma^{q}_j)\in \cT_i, i\neq j\in\{1,2,\cdots,n\}$.
\end{center}

\end{Lemma}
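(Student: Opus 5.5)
The plan is to argue by contradiction, comparing the ranks of $x^{\alpha}_{\pi_\lambda(M)}$ for two well-chosen parameters. Suppose there is an arrow $\gamma^q_j\colon a\to b$ in $T(M)_1$ and an index $i\neq j$ with $a\notin\cS_i$ and $b\notin\cT_i$. Since $T(M)$ is a subtree of the bipartite tree $T(n)$, this says exactly that neither endpoint of $\gamma^q_j$ is incident to an arrow of colour $i$ in $T(M)$. Write $A_i=\bg_i$ and $A_j=\bg_j$ as block matrices with respect to the fixed decompositions $N_1=\bigoplus_{\pi(y)=1}M_y$ and $N_2=\bigoplus_{\pi(y)=2}M_y$, as in Section~\ref{Matrice}. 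Then the hypothesis says that the block column of $A_i$ indexed by $a$ and the block row of $A_i$ indexed by $b$ are both zero. Meanwhile $A_j$ has the nonzero block $\bg^q_j=M(\gamma^q_j)$ in position $(b,a)$; it is nonzero because $M$ is indecomposable with connected support, so no arrow of $T(M)$ carries the zero map.

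Constant rank gives $\rk(A_i+\lambda A_j)=\rk A_i$ for every $\lambda\in k$, since $e_i+\lambda e_j\neq 0$. The goal is to show that for suitable (generic) $\lambda$ the rank is strictly larger, which is the contradiction. Permute rows and columns so that row $b$ is last and column $a$ is last. Then
\[
A_i+\lambda A_j=\begin{bmatrix} A_i'+\lambda A_j' & \lambda u\\ \lambda v & \lambda\bg^q_j\end{bmatrix},
\]
where $A_i'$ is $A_i$ with the zero row and column deleted, so $\rk A_i'=\rk A_i$. Here $u$ collects the other $j$-arrows out of $a$ and $v$ the other $j$-arrows into $b$. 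In $T(n)$ each vertex has exactly one $j$-arrow, so in fact $u=0$ and $v=0$. The matrix is therefore block diagonal, and by Lemma~\ref{matrix} (or directly)
\[
\rk(A_i+\lambda A_j)=\rk(A_i'+\lambda A_j')+\rk\bg^q_j .
\]

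It remains to choose $\lambda$ so that $\rk(A_i'+\lambda A_j')\geq\rk A_i'$. By lower semicontinuity of rank, the set of $\lambda$ with $\rk(A_i'+\lambda A_j')\geq\rk A_i'$ is a nonempty Zariski-open subset of $k$ (it contains $\lambda=0$). Since $k$ is algebraically closed, hence infinite, this set contains some $\lambda\neq 0$. For that $\lambda$ we get $\rk(A_i+\lambda A_j)\geq\rk A_i+\rk\bg^q_j>\rk A_i$, contradicting constant rank.

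The main obstacle is the bookkeeping in the middle step: checking carefully that after deleting row $b$ and column $a$ the off-diagonal blocks genuinely vanish. This uses that each vertex of $T(n)$ carries exactly one arrow of each colour, together with the hypothesis on $\cS_i$ and $\cT_i$. I would verify this directly from the block description in Proposition~\ref{position}.
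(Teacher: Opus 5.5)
Your proof is correct, and its skeleton matches the paper's. The block $\bg^q_j$ sits in a block row and block column where $\bg_i$ vanishes. Since every vertex of $T(n)$ carries exactly one arrow of each colour, no other block of $\bg_j$ appears there either. So $\bg^q_j$ splits off and pushes the rank above $\rk\bg_i$.

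Where you differ is in controlling the rest of the matrix.

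\textbf{The paper's route.} It takes $\alpha=e_i+e_j$ and asserts $\rk(\bg_i+\bg_j)\geq\rk\begin{bmatrix}\bg_i&0\\0&\bg^u_j\end{bmatrix}$ ``according to Proposition~\ref{position}''. In your notation this amounts to $\rk(A_i'+A_j')\geq\rk A_i'$. That inequality fails for block patterns containing cycles. It really rests on the pattern being a subforest of $T(n)$: zeroing a block that lies on a bridge cannot raise the rank, by $\rk\begin{bmatrix}X&0\\S&Y\end{bmatrix}\geq\rk X+\rk Y$ from Lemma~\ref{matrix}. The paper does not spell this argument out.

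\textbf{Your route.} Choosing a generic $\lambda\neq 0$ by lower semicontinuity bypasses this global fact entirely. You need only the local picture at $a$ and $b$ and the fact that $k$ is infinite. The paper's route, once filled in, works with the single parameter $e_i+e_j$ over any field.

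\textbf{One minor correction.} $\bg^q_j\neq 0$ does not follow from connectedness of the support as such. It follows because a zero map on an edge of the tree $T(M)$ would split $M$ as the direct sum of its restrictions to the two components of $T(M)$ with that edge removed.
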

\begin{proof}
Let  $N=\pi_\lambda{(M)}$.   Suppose that there exists a non-trivial map  $M(\gamma^u_j): M_{s(\gamma^u_j)} \rightarrow M_{t(\gamma^u_j)}$  in $\cA_j$ for some  $i$ ($\neq j$) such that $s(\gamma^u_j)\nin \cS_i$ and $t(\gamma^u_j)\nin \cT_i$, that is, there is no   direct summand  $\bg^u_j$ of $\bg_j$ such that it is located in the same row or column with some direct summand $\bg^v_i$ of $\bg_i$ in the block matrix $\bg_i+\bg_j$.  According to Proposition \ref{position}, we have
\begin{equation}\label{rk}
\rk(\bg_i+\bg_j)=\rk(\bg_i+(\bigoplus_{\pi(\gamma^p_j)=\gamma_j} M(\gamma^q_j))\geq \rk \begin{bmatrix}
\bg_i & 0\\
0& \bg^u_j
\end{bmatrix}>\rk \bg_i.
\end{equation}
Clearly, the inequality $(\ref{rk})$ cannot happen because module $N$ is of constant rank and  $\rk(\bg_i+\bg_j) = \rk \bg_i=c$ for some $c\in \mathbb{N}$.
\end{proof}

In fact, the maps in $N=\pi_\lambda(M)$ satisfy the following conditions when $N$ is of constant rank.

\begin{Lemma}\cite[Lemma 3.1]{West}\label{KI} Let $N\in \modd\cK_n$ be a constant rank module. Then the maps in module $N$ satisfy 

\begin{center}
$N(\gamma_i)(\ker N(\gamma_j))\subseteq \im N(\gamma_j)$,
\end{center}
for any $i,j\in \{1,\cdots,n\}$.
\end{Lemma}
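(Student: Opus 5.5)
We need to prove Lemma \ref{KI}: if $N$ is of constant rank, then $N(\gamma_i)(\ker N(\gamma_j))\subseteq \im N(\gamma_j)$ for all $i,j$. The approach is a perturbation argument along the line $\gamma_j+t\gamma_i$, exploiting that a rank cannot jump on a dense set. Throughout we write $A=N(\gamma_j)$, $B=N(\gamma_i)$ and $r=\rk A$. By constant rank, $\rk(A+tB)=r$ for every $t\in k$, since $e_j+te_i\neq 0$. The case $i=j$ is trivial, because then $B(\ker A)=0$; so we assume $i\neq j$.

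Fix $v\in\ker A$ and suppose, for contradiction, that $Bv\notin\im A$. Choose $u_1,\dots,u_r\in N_1$ with $Au_1,\dots,Au_r$ a basis of $\im A$. The vectors $Au_1,\dots,Au_r,Bv$ are then linearly independent in $N_2$. Now consider the $r+1$ vectors
\[
(A+tB)u_1,\ \dots,\ (A+tB)u_r,\ (A+tB)\tfrac{v}{t}=Av/t+Bv=Bv
\]
for $t\neq 0$. All of them lie in $\im(A+tB)$.

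Their independence is a Zariski-open condition in $t$: some $(r+1)\times(r+1)$ minor of the matrix with columns $Au_1+tBu_1,\dots,Au_r+tBu_r,Bv$ is a polynomial $p(t)$ with $p(0)\neq0$, because at $t=0$ the columns are independent. Since $k$ is algebraically closed and hence infinite, $p$ has only finitely many roots. So we can pick $t\neq0$ with $p(t)\neq0$, which gives $\rk(A+tB)\geq r+1$. This contradicts constant rank, and therefore $Bv\in\im A$, which is the claimed inclusion.

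The key step is the trick of rescaling the kernel vector by $1/t$: it makes $(A+tB)(v/t)=Bv$ exactly, so no limit argument is needed. The only potential obstacle is making sure $k$ is infinite, so that a good $t$ exists; this holds because $k$ is algebraically closed. No structure of $T(n)$ is used, so this works directly in $\modd\cK_n$.
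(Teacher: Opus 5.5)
Your proof is correct. For $t\neq 0$ the vectors $(A+tB)u_1,\dots,(A+tB)u_r$ and $(A+tB)(v/t)=Bv$ all lie in $\im(A+tB)$. The chosen $(r+1)\times(r+1)$ minor is a polynomial in $t$ that is nonzero at $t=0$, and since $k$ is infinite some $t\neq 0$ avoids its finitely many roots. That forces $\rk(A+tB)\geq r+1$, contradicting constant rank. Your argument also handles the degenerate cases $i=j$ and $r=0$.

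The paper gives no proof of this lemma at all. It imports the statement from \cite[Lemma 3.1]{West} and then uses it only in Lemma \ref{tt}. So your argument is not a variant of the paper's proof but a self-contained replacement for the citation.

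What your route buys is transparency about the hypotheses. You use only that $\rk(A+tB)\leq \rk A$ for infinitely many $t$ on the single pencil $e_j+te_i$, and that $k$ is infinite; algebraic closure is not needed. Nothing about $T(n)$, the push-down, or indecomposability enters. The same argument applies verbatim to any two linearly independent combinations $x^{\alpha}_N, x^{\beta}_N$ in place of $N(\gamma_j), N(\gamma_i)$.

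What the citation buys is only brevity, and placement of the result in Westwick's general setting of spaces of linear maps of equal rank.
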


When $N=\pi_\lambda(M)$ and $M$ is indecomposable and of constant rank, Lemma \ref{KI} is interpreted as  the following 

\begin{Lemma}\label{tt}
Let $\bg^p_i\in \cA_i$, and let $\bg^{p}_j,\bg^{p'}_j\in \cA_j$ $i\neq j$. Suppose that there exist   $s(\bg^p_i)=s(\bg^p_j)$ and $t(\bg^p_i)=t(\bg^{p'}_j)$. Then we have 
\begin{center}
$\bg^p_i(\ker \bg^{p}_j)\subseteq \im \bg^{p'}_j$.
\end{center}

\end{Lemma}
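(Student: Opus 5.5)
The plan is to derive Lemma \ref{tt} directly from Lemma \ref{KI} applied to $N=\pi_\lambda(M)$, which is of constant rank by assumption. We decompose the two global maps block by block and then project the global inclusion onto the single summand $M_{t(\bg^p_i)}$ of $N_2$. By construction, $N_1=\bigoplus_{\pi(y)=1}M_y$ and $N_2=\bigoplus_{\pi(y)=2}M_y$. Moreover $\bg_j=\bigoplus_{q}\bg^q_j$, where each vertex of $T(n)$ is the start of exactly one arrow lying over $\gamma_j$ and the target of exactly one arrow lying over $\gamma_j$, since $T(n)$ is the universal covering. Consequently $\bg_j$ is a block matrix with at most one nonzero block in each block row and each block column. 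It follows that $\ker\bg_j=\bigoplus_{x}\ker(M(\gamma_j)|_{M_x})$ and $\im\bg_j=\bigoplus_{y}\im(M(\gamma_j)\text{ into }M_y)$, both decomposing along the vertex decompositions of $N_1$ and $N_2$.

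First, set $x=s(\bg^p_i)=s(\bg^p_j)$ and $y=t(\bg^p_i)=t(\bg^{p'}_j)$. Take $v\in\ker\bg^p_j\subseteq M_x$ and view it as an element of $N_1$ supported at $x$. Since $\bg^p_j$ is the only component of $\bg_j$ with source $x$, we get $v\in\ker\bg_j$. Lemma \ref{KI} then gives $\bg_i(v)\in\im\bg_j$.

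Next, compute $\bg_i(v)$. The only component of $\bg_i$ with source $x$ is $\bg^p_i$, so $\bg_i(v)=\bg^p_i(v)$, which is supported at the single vertex $y$. Let $\mathrm{pr}_y:N_2\to M_y$ be the projection onto this summand. Because $\im\bg_j$ decomposes along vertices, $\mathrm{pr}_y(\im\bg_j)$ equals the image of the unique component of $\bg_j$ with target $y$, and that component is $\bg^{p'}_j$. Applying $\mathrm{pr}_y$ therefore yields $\bg^p_i(v)\in\im\bg^{p'}_j$, which is the claim.

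The main obstacle is making the decomposition in the first step rigorous: we must check that over each vertex there is exactly one lift of $\gamma_j$ at its source and exactly one at its target, so that the projection of $\im\bg_j$ to $M_y$ is precisely $\im\bg^{p'}_j$. The proof should also make explicit that arrows of $T(n)$ outside $T(M)$ carry zero maps, so they contribute nothing to the kernels or images and cause no harm.
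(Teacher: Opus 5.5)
Your proposal is correct and follows essentially the same route as the paper: you apply Lemma~\ref{KI} to $N=\pi_\lambda(M)$ and localize using that only one summand of $\bg_j$ starts at $s(\bg^p_i)$ and only one ends at $t(\bg^p_i)$. The paper phrases this localization as a contradiction argument, whereas your explicit projection $\mathrm{pr}_y$ makes the passage from the global inclusion to the local one cleaner; one small wording fix is that a vertex lying over $1$ is the start of exactly one lift of $\gamma_j$ and a vertex lying over $2$ is the target of exactly one, rather than every vertex being both.
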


\begin{proof}
Suppose that $\rk x^N_\alpha =m$.  Recall that
\begin{center}
$\bg_i=N(\gamma_i)=\bigoplus _{\pi(\gamma^x_i)=\gamma_i} M(\gamma^x_i)=\bigoplus _{\pi(\gamma^x_i)=\gamma_i}\bg^x_i$, and $\bg_j=N(\gamma_j)=\bigoplus _{\pi(\gamma^y_j)=\gamma_j} M(\gamma^y_j)=\bigoplus _{\pi(\gamma^y_i)=\bg_j}\bg^y_j $. 
\end{center}
Then $\ker \bg_j=\ker (\bigoplus _{\pi(\gamma^y_j)=\gamma_j}M(\gamma^y_j))=\bigoplus _{\pi(\gamma^y_j)=\gamma_j}\ker M(\gamma^y_j)$. Hence we have 
\begin{equation}
\begin{split}
\bg_i(\ker \bg_j)& =( \bigoplus _{\gamma^x_i\in \cA_i} M(\gamma^x_i))(\bigoplus _{\gamma^y_j\in \cA_j}\ker M(\gamma^y_j)) \\& = \bigoplus_{s(\bg^p_i)=s(\bg^{p}_j), \bg^p_i\in \cA_i,\bg^p_j\in\cA_j}\bg^p_i(\ker (\bg^{p}_j)) \\ &  \subseteq \bigoplus _{t(\gamma^p_i)=t(\gamma^p_j), \bg^p_i\in \cA_i,\bg^p_j\in\cA_j} \im M(\gamma^p_j) \\ & \subseteq\im (\bigoplus _{\pi(\gamma^p_j)=\gamma_j, \bg^p_j\in \cA_j} M(\gamma^p_j)) \\& = \im \bg_j.
\end{split}
\end{equation}
Otherwise, we assume that there exist some $\bg^p_i \in \cA_i$, and $\bg^{p}_j,\bg^{p'}_j\in \cA_j$ such that 
\begin{center}
$\bg^p_i(\ker \bg^{p}_j)\nsubseteq \im \bg^{p'}_j$.
\end{center}
Note that we only have one direct summand $\bg^{p'}_j$ of $\bg_j$ satisfying $t(\bg^p_i)=t(\bg^{p'}_j)$,  that is to say, $\bg_i(\ker \bg_j)\nsubseteq\im \bg_j$. We can see that module $N$ is not of constant   rank by Lemma \ref{KI}, this is a contradiction. Finally, we have 
\begin{center}
 $N(\gamma_i)(\ker N(\gamma_j))\subseteq \im N(\gamma_j) \Leftrightarrow N(\gamma^p_i)(\ker \gamma^{p}_j)\subseteq \im N(\gamma^{p'}_j)$, 
\end{center}
for any direct summands $\bg^p_i\in \cA_i, \bg^p_j,\bg^{p'}_j\in \cA_j$ with $s(\bg^p_i)=s(\bg^{p}_j)$ and $t(\bg^p_i)=t(\bg^{p'}_j)$.

\end{proof}

We also have an easy result for the diameter walk of $T(M)$.

\begin{Lemma}
Let $M\in \modd(T(n),\Omega)$.     Suppose that $M$ is indecomposable and of constant rank. Then $d(M)=0$  or $d(M)\geq 2$.

\end{Lemma}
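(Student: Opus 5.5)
We have to rule out $d(M)=1$. In that case $\supp(M)=\{a_0,a_1\}$ with a single arrow $\gamma^1_i\colon a_0\to a_1$ for some $i$. Since $M$ is indecomposable, $M$ is the thin module with $M_{a_0}=M_{a_1}=k$ and $M(\gamma^1_i)=\lambda\neq 0$. To see this, decompose $M(\gamma^1_i)$ into its kernel and a complement, and its image and a complement. Indecomposability then forces $M\cong$ the representation $k\xrightarrow{\lambda}k$ on the edge.

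The push-down $N=\pi_\lambda(M)$ has $N_1=M_{a_0}=k$ and $N_2=M_{a_1}=k$. Its maps are $N(\gamma_i)=\lambda$ and $N(\gamma_j)=0$ for every $j\neq i$.

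Now compare two values of $\alpha$:
\begin{itemize}
\item Take $\alpha=e_i$. Then $x^{\alpha}_N=\lambda$, which has rank $1$.
\item Since $n\geq 3$, we can choose $j\neq i$ and take $\alpha=e_j$. Then $x^{\alpha}_N=0$, which has rank $0$.
\end{itemize}
Hence $\rk x^{\alpha}_N$ is not independent of $\alpha$, so $N$ is not of constant rank. This contradicts the assumption that $M$ is of constant rank.

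Alternatively, the contradiction follows from Lemma \ref{ddpoint}. The unique arrow $\gamma^1_i$ would need $s(\gamma^1_i)\in\cS_j$ or $t(\gamma^1_i)\in\cT_j$ for some $j\neq i$. But $\cS_j=\cT_j=\emptyset$, because $T(M)$ has no arrow lying over $\gamma_j$.

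Therefore the diameter is never $1$, so $d(M)=0$ or $d(M)\geq 2$. The only step needing a moment's care is justifying that an indecomposable $M$ supported on one edge is the thin module $k\to k$ with nonzero map. If one prefers to avoid that, the argument above works for arbitrary dimensions anyway. $N(\gamma_j)=0$ for $j\neq i$ gives rank $0$. $N(\gamma_i)=M(\gamma^1_i)$ is nonzero, since otherwise $M$ splits as $S(a_0)^{m}\oplus S(a_1)^{m'}$. So it has positive rank.
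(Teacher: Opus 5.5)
Your proof is correct and is essentially the paper's argument. When $d(M)=1$, only the single arrow over some $\gamma_i$ carries a nonzero map, so $\rk x^{e_i}_{N}>0=\rk x^{e_j}_{N}$ for $j\neq i$; this rank comparison is exactly what the paper's terse ``since $n\geq 3$ \ldots\ this cannot happen'' leaves implicit, and you spell it out (only $n\geq 2$ is actually needed).
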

\begin{proof}
When $N=\pi_\lambda(M)$ is a zero or simple module,  we clearly have $d(M)=0$. We now suppose that $N$ is not a zero or simple module. Then we get $d(M)\geq 1$. If $d(M)=1$, then there only exists  one  diameter walk $(a_0, a_1)$ and one non-trivial map $M(\gamma^1_1): M_{a_0}\rightarrow M_{a_1}$ in $T(M)$. Since $n\geq 3$  for the tree $T(n)$, this cannot happen. Hence we have $d(M)\geq 2$.

\end{proof}

From now on, we always assume that $d(M)\geq 2$ when we talk about an indecomposable module $M\in \GR$.

\begin{Lemma}\label{point}
 Let $(a_0,a_1,\cdots,a_m)$ be  a  walk of $T(M), m\geq 2$. Suppose that $a_0$ is a leaf. Then we have $\mid \mathcal{N}_M(a_1)\mid=n$, $n\geq 3$.
\end{Lemma}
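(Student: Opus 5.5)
We argue by contradiction, using Lemma \ref{ddpoint} applied to the single arrow joining $a_0$ and $a_1$.

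\emph{Setup.} Since $M$ is indecomposable, $T(M)=\supp(M)$ is a tree. Every arrow of $T(n)$ with both endpoints in $\supp(M)$ carries a non-zero map. Indeed, if such a map were zero, we could cut the tree along that arrow, and $M$ would split as a direct sum of the restrictions to the two components, against indecomposability.

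Let $\gamma^q_i$ be the arrow joining $a_0$ and $a_1$. Its orientation depends on whether $a_0$ is a sink or a source in the bipartite $\Omega$; the argument below is symmetric in the two cases, so we only treat one of them.

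\emph{Assumption to refute.} Suppose $|\cN_M(a_1)|<n$. Every vertex of $T(n)$ has exactly $n$ neighbours, one along an arrow of each colour $\gamma_1,\dots,\gamma_n$, because $T(n)$ is the universal cover of $K(n)$. Hence there is a colour $j$ such that the $\gamma_j$-neighbour of $a_1$ lies outside $\supp(M)$. Since $\gamma^q_i$ joins $a_1$ to $a_0\in\supp(M)$, this colour satisfies $j\neq i$.

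\emph{Applying Lemma \ref{ddpoint}.} Apply Lemma \ref{ddpoint} to the arrow $\gamma^q_i\in T(M)_1$ and the index $j\neq i$. We read $\cS_j,\cT_j$ as the endpoints of the arrows $\gamma^p_j$ that actually appear as direct summands of $\bg_j$, i.e. those lying in $T(M)$. This is the reading used in the proof of Lemma \ref{ddpoint}, where only the non-trivial maps in $\cA_j$ are considered.

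The lemma gives $s(\gamma^q_i)\in\cS_j$ or $t(\gamma^q_i)\in\cT_j$. In either case, one of the endpoints $a_0$ or $a_1$ is incident to a $\gamma_j$-coloured arrow of $T(M)$. We rule out both endpoints:
\begin{itemize}
\item[(i)] $a_0$ is a leaf of $T(M)$ and $a_1$ is already a neighbour of $a_0$ in $T(M)$, so $\cN_M(a_0)=\{a_1\}$. The only arrow of $T(M)$ at $a_0$ is therefore $\gamma^q_i$, which has colour $i\neq j$.
\item[(ii)] By the choice of $j$, the unique $\gamma_j$-arrow at $a_1$ leaves $\supp(M)$, so it is not in $T(M)$.
\end{itemize}
Both cases are impossible, which is a contradiction. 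Hence $|\cN_M(a_1)|=n$.

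The hypothesis $m\geq 2$ guarantees that $a_1$ is an interior vertex of the walk, not a leaf. This excludes the degenerate case $d(M)=1$, which is already excluded by the preceding lemma.

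\emph{Main obstacle.} The only delicate point is the interpretation of $\cS_j$ and $\cT_j$. They must range only over arrows carrying non-zero maps of $M$, since otherwise Lemma \ref{ddpoint} is vacuous. If a more robust justification is wanted, we can instead repeat the rank argument in the proof of Lemma \ref{ddpoint} directly. Because $\bg^q_i$ shares neither a row nor a column with any block of $\bg_j$, we get
$$\rk(\bg_i+\bg_j)\ \geq\ \rk\bg_j+\rk\bg^q_i\ >\ \rk\bg_j,$$
using Lemma \ref{matrix} and the fact that $\bg^q_i\neq0$. This contradicts constant rank, since $\rk(\bg_i+\bg_j)=\rk\bg_j$.
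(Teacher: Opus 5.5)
Your proof is correct and is essentially the paper's own argument: a colour missing at $a_1$, together with the fact that the leaf $a_0$ carries only the single arrow to $a_1$, produces an arrow of $T(M)$ violating Lemma \ref{ddpoint}, and the source case is symmetric. Only your optional fallback is slightly too quick, since isolating $\bg^q_i$ gives $\rk(\bg_i+\bg_j)=\rk\bg^q_i+\rk W$ for the complementary block $W$, and $\rk W\geq\rk\bg_j$ needs its own justification (e.g.\ use $\bg_i+t\bg_j$ for all but finitely many $t\in k$); the paper's proof of Lemma \ref{ddpoint} has the same gap.
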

\begin{proof}
Since    $M$ is of constant rank, we have   $\rk \bg_1=\rk \bg_2=\cdots=\rk\bg_n$ in $N$.   Suppose that there exists a walk $(a_0,a_1,\cdots,a_m)$ with $a_0$ being a leaf in $T(M)$ such that  $\mid \mathcal{N}_M(a_1) \mid <n$. We first assume that  $a_0$ is a sink. It indicates that there exists at least one  $i$ such that $a_1\nin \cS_i$,  that is, there is no a   non-trivial direct summand  $\bg^p_i: M_{a_1}\rightarrow M_{a'}$ in $T(M)$ for all $\bg^p_i\in \cA_i$, where $ a'\in \mathcal{N}_M(a_1)$. Since $a_0$ is a leaf, it also says that there does not exist a direct summand $\bg^{q}_i\in \cA_i$  such that $t(\bg^{q}_i)=a_{0}$, that is, $a_0\nin \cT_i$. On the other hand,  there  exists at least  one map $\bg^{w}_{j}: M_{a_1}\rightarrow M_{a_0} $ that is non-trivial for some $\bg^w_j\in \cA_j$ ($j\neq i$) since $M$ is indecomposable. Then we have $s(\bg^w_j)=a_1\nin \cS_i$ and $t(\bg^w_j)=a_0 \nin \cT_i$. According to  Lemma \ref{ddpoint}, this cannot happen. When $a_0$ is a source, we can similarly get a contradiction. Finally, we have
$\mid \mathcal{N}_M(a_1)\mid=n.$
\end{proof}

 When $n\geq 3$, we have the following.

\begin{proposition}\label{lj}
 Let $(a_0, a_1, \cdots, a_m)$ be a  walk of  $T(M), m\geq 2$. Suppose that any $b_i\in \mathcal{N}_M(a_1)\setminus \{a_2 \}$ is a leaf for $1 \leq i \leq n-1$. Then

\begin{enumerate}
\item  When  $a_0$ is a sink, the kernels of the maps $\bg^1_i: M_{a_1}\rightarrow M_{b_i}$  satisfy    
\begin{center}
 $\ker \bg^1_1\cong \cdots \cong  \ker \bg^1_{n-1}$, 
\end{center}
and they are all surjective. In particular,  the map $\bg^1_n: M_{a_1}\rightarrow M_{a_2}$ is injective;

\item   When $a_0$ is a source,  the images of the  maps $\bg^1_i: M_{b_i}\rightarrow M_{a_1}$  satisfy 
\begin{center}
$\im \bg^1_1\cong \cdots \cong\im \bg^{1}_{n-1}$,
\end{center}
 and they are all injective. In particular, the map $\bg^1_n: M_{a_2} \rightarrow M_{a_1}$ is surjective and $\im \bg^1_i\subseteq \im \bg^1_n$ under the isomorphisms, $1\leq i\leq n-1$.
\end{enumerate}
Moreover, there always exists $\rk \bg^1_n\geq  \rk \bg^1_i$,  where $\bg^1_n$ is the map between $M_{a_1}$ and $M_{a_2}$, $1\leq i\leq n-1$.

\end{proposition}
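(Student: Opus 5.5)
We would prove the statement by reading Lemma \ref{KI} directly at the vertices $a_1$ and $b_i$, rather than through Lemma \ref{tt}. The hypothesis of Lemma \ref{tt} needs a second $\gamma_j$-arrow ending at $t(\bg^1_i)$, and this fails at a leaf. By Lemma \ref{point} we have $\mid\cN_M(a_1)\mid=n$, so $a_1$ carries exactly one arrow of each colour. We label them so that $\gamma^1_i$ joins $a_1$ and $b_i$ for $1\leq i\leq n-1$, and $\gamma^1_n$ joins $a_1$ and $a_2$. Each $b_i$ is a leaf, so its only neighbour in $T(M)$ is $a_1$, and the only arrow of $T(M)$ at $b_i$ is $\gamma^1_i$. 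We use $N=\pi_\lambda(M)$ and the decompositions $N_1=\bigoplus M_y$, $N_2=\bigoplus M_y$, $N(\gamma_i)=\bigoplus_j\bg^j_i$.

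\emph{Case $a_0$ a sink.} Then $a_1$ is a source and the $b_i$ are sinks. Fix $i\leq n-1$ and $j\neq i$ (here $j=n$ is allowed). We apply Lemma \ref{KI} to the vectors $v\in\ker\bg^1_j\subseteq M_{a_1}\subseteq N_1$, which lie in $\ker N(\gamma_j)$. The only $\gamma_i$-arrow leaving $a_1$ is $\gamma^1_i$, so $N(\gamma_i)v=\bg^1_iv$ is concentrated in $M_{b_i}$. No $\gamma_j$-arrow ends at $b_i$, so $\im N(\gamma_j)$ has zero component in $M_{b_i}$. Hence $\bg^1_i(\ker\bg^1_j)=0$, that is, $\ker\bg^1_j\subseteq\ker\bg^1_i$. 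Running over all pairs $i\neq j$ in $\{1,\dots,n-1\}$ gives $\ker\bg^1_1=\cdots=\ker\bg^1_{n-1}$ (equality, not only isomorphism). Taking $j=n$ gives $\ker\bg^1_n\subseteq\ker\bg^1_i$ for every $i$, so $\ker\bg^1_n$ is contained in the intersection of the kernels of all arrows at $a_1$. Since $a_1$ is a source, any nonzero vector killed by all arrows at $a_1$ spans a direct summand $S(a_1)$ of $M$. Indecomposability and $d(M)\geq 2$ exclude this, so $\bg^1_n$ is injective. Surjectivity of $\bg^1_i$ uses the same splitting idea: $b_i$ is a sink whose only incoming arrow is $\gamma^1_i$, so a complement of $\im\bg^1_i$ in $M_{b_i}$ would split off as copies of $S(b_i)$.

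\emph{Case $a_0$ a source.} This is dual. Now $M_{b_i}\subseteq\ker N(\gamma_j)$ for $j\neq i$, since no $\gamma_j$-arrow leaves $b_i$. Lemma \ref{KI} gives $\bg^1_i(M_{b_i})\subseteq\im N(\gamma_j)$, and the component of this image at $a_1$ is $\im\bg^1_j$, because $a_1$ has a single incoming arrow of colour $j$. Hence $\im\bg^1_i\subseteq\im\bg^1_j$ for all $i\neq j$. This yields $\im\bg^1_1=\cdots=\im\bg^1_{n-1}\subseteq\im\bg^1_n$. Injectivity of $\bg^1_i$ follows because a kernel vector at the source leaf $b_i$ would split off as $S(b_i)$. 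For the surjectivity of $\bg^1_n$, the sum of the images of all arrows into the sink $a_1$ must be $M_{a_1}$, otherwise a cokernel splits off as $S(a_1)$. By the inclusions just proved, this sum equals $\im\bg^1_n$.

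\emph{The rank inequality.} In the sink case, $\rk\bg^1_n=\dim M_{a_1}\geq\rk\bg^1_i$. In the source case, $\rk\bg^1_n=\dim M_{a_1}\geq\dim\im\bg^1_i$.

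The main obstacle is the bookkeeping in the components of $\im N(\gamma_j)$ and $\ker N(\gamma_j)$. The whole argument depends on the leaf property ensuring that $b_i$ meets no arrow of another colour, and on the fact that $a_1$ carries only one arrow of each colour. The splitting-off arguments that give the injectivity and surjectivity statements are routine once these component computations are in place.
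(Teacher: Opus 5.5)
Your proof is correct, and it takes a different route from the paper's.

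\textbf{The paper's route.} The paper stays inside the partitioned-matrix picture of Proposition \ref{position}. It first derives rank relations:
\begin{itemize}
\item $\rk\bg^1_n\geq\rk\bg^1_i$, by comparing $\rk(\bg_n+\bg_j)$ with $\rk\bg_n$;
\item $\rk\bg^1_1=\cdots=\rk\bg^1_{n-1}$, equal to the rank of the stacked column, by comparing $\rk(\bg_i+\bg_j+\bg_n)$ with $\rk\bg_j$ via Lemma \ref{matrix}.
\end{itemize}
From these equalities it obtains the kernel (resp.\ image) statement only up to isomorphism. It uses the Westwick-type containment just once, as $\bg^1_i(\ker\bg^1_n)\subseteq\im\bg_n$ together with $\im\bg^1_i\cap\im\bg_n=0$, to get injectivity of $\bg^1_n$. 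In the source case it reads off $\im\bg^1_i\subseteq\im\bg^1_n$ from $\rk\Gamma_M=\rk\bg_i=\rk\bg_n$.

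\textbf{Your route.} You apply Lemma \ref{KI} to every ordered pair of colours at $a_1$. You use only that $N(\gamma_j)$ is block diagonal with respect to the vertex decomposition, and that a leaf $b_i$ meets the support through a single arrow. This buys three things:
\begin{itemize}
\item actual equalities of subspaces, $\ker\bg^1_1=\cdots=\ker\bg^1_{n-1}$ and $\im\bg^1_1=\cdots=\im\bg^1_{n-1}\subseteq\im\bg^1_n$;
\item the final rank inequality as an immediate consequence of $\bg^1_n$ being injective (resp.\ surjective);
\item no partitioned-rank estimates, whose strict steps in the paper tacitly need a rescaling/semicontinuity argument in $\alpha$.
\end{itemize}
What the paper's route buys is rank identities in the same block-matrix language that Lemma \ref{two leaves} and Theorem \ref{es} continue to use.

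Your observation that Lemma \ref{tt} does not literally apply at a leaf is correct. The paper's appeal to it at this point is really an appeal to Lemma \ref{KI} plus exactly your component computation.

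One wording fix: in $T(n)$ there always is a $\gamma_j$-arrow ending at (resp.\ starting from) $b_i$. What you use is that its other endpoint lies outside $\supp(M)$ because $b_i$ is a leaf of $T(M)$, so the corresponding map is zero. State it that way.
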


\begin{proof}
We consider the module $N=\pi_{\lambda}(M)$. 
If  $a_0$ is a sink, then we have the following  by Proposition \ref{position}  
$$\Gamma_M=\sum^n_{i=1}\bg_i=\begin{bmatrix}
\bg^1_1 &  &0 \\
\vdots &   & \vdots \\
\bg^1_{n-1} & & 0 \\
\bg^1_n &  &\Gamma'_M   
\end{bmatrix}.$$ 
We have $\rk \bg^1_n\geq\rk\bg^1_i$ from $\Gamma_M$. Otherwise, suppose that there exists some $j\in\{1,\cdots,n-1\}$ such that $\rk\bg^1_n<\rk\bg^1_j$. Since $s(\bg^1_n)=s(\bg^1_j)$, we have 
\begin{center}
$\rk(\bg_n+\bg_j)=\rk \begin{bmatrix}
\bg^1_j & 0\\
\bg^1_n & \Gamma_{i,j}
\end{bmatrix}>\rk \begin{bmatrix}
0 & 0\\
\bg^1_n & \Gamma_{i,j}
\end{bmatrix}\geq \rk\bg_n$,
\end{center}
where $\Gamma_{i,j}=\bigoplus_{\bg^i_n\in \cA'_n}\bg^i_n +\bigoplus_{\bg^i_j\in \cA'_j}\bg^i_j$, $\cA'_i=\cA_i\setminus \{\bg^1_i\}$, $i\in \{j,n\}$, and this is  a contradiction.  For the map $\bg^1_i: M_{a_1}\rightarrow M_{b_i}$, $b_i\in \mathcal{N}_M(a_1)\setminus \{a_2 \}$, $1\leq i\leq n-1$,  there exists an equation
 \begin{center}
  $\rk \bg^1_1= \cdots=\rk\bg^1_{n-1}=\rk\begin{bmatrix}
 \bg^1_1 \\
 \vdots\\
 \bg^1_{n-1}
 \end{bmatrix}$.
 \end{center}
Otherwise, suppose that there exists  $\rk \bg^1_i>\rk \bg^1_j$ for some  $1\leq i\neq j\leq n-1, n\geq 3$.   By Lemma \ref{matrix}, there exist inequalities

\begin{center}

  $\rk (\gamma_i+\gamma_j+\gamma_n)=\rk\begin{bmatrix} 
\bg^1_i &  &  0 \\
\bg^1_j& &  0 \\
\bg^1_n &  & \Gamma_{i,j,n} 
\end{bmatrix}\geq \rk \bg^1_i+ \rk \Gamma_{i,j,n} > \rk \begin{bmatrix} 

\bg^1_j & 0 \\
0 & \Gamma_{i,j,n}
\end{bmatrix} = \rk \bg^1_j+ \rk \Gamma_{i,j,n} \geq \rk \bg_j$
 \end{center}
where $\Gamma_{i,j,n}=(\bigoplus_{\bg^p_i\in \cA'_i}\bg^p_i)+(\bigoplus_{\bg^p_j\in \cA'_j}\bg^p_j)+(\bigoplus_{\bg^p_n\in \cA'_n}\bg^p_n), \cA'_q=\cA_q\setminus \{\bg^1_q\}$, $q\in \{i,j,n\}$,  and this is a contradiction.  Finally, we can see that  
\begin{center}
 $\ker \bg^1_1\cong \cdots \cong \ker  \bg^1_{n-1}\cong  \ker \begin{bmatrix}
 \bg^1_1 \\
 \vdots\\
 \bg^1_{n-1}
 \end{bmatrix}.$
\end{center} 
Moreover, each $\gamma^1_i$ is surjective since $M$ is indecomposable and $t(\gamma^1_i)$ is a leaf, $1\leq i \leq n-1$. Note that 
\begin{center}
 $\bg^1_i(\ker \bg^1_n)\subseteq \im \bg_n$, $1\leq i\leq n-1$
\end{center}
by  Lemma \ref{tt}. However, we have $\im \bg^1_i \cap \im \bg_n=(0)$, $1\leq i\leq n-1$. Hence  $\bg^1_i(\ker \bg^1_n)=(0)$, i.e., $\ker \bg^1_n \subseteq \ker \bg^1_i$ for all $1\leq i\leq n-1$. Then we have $\ker \bg^1_n=(0)$ because $M$ is indecomposable.

Suppose that $a_0$ is a source vertex. By duality,  we can get  the following 
\begin{center}
$\Gamma_M=\sum^n_{i=1}\bg_i=\begin{bmatrix}
\bg^1_1 & \bg^1_2 & \cdots & \bg^1_{n-1} & \bg^1_n \\
0& 0   & \cdots &  0 & \Gamma'_M  \\

\end{bmatrix}.$
\end{center}
Similarly,  we also have 
\begin{center}
$\rk \bg^1_1=\cdots= \rk \bg^1_{n-1}= \rk \begin{bmatrix}
\bg^1_1 & \cdots &\bg^1_{n-1}
\end{bmatrix}$, and  $\rk \bg^1_n\geq  \rk\bg^1_i, 1\leq i\leq n-1$.
\end{center}
Then we have $\im \bg^1_1\cong \cdots \cong \im \bg^1_{n-1}\cong\im \begin{bmatrix}
\bg^1_1 & \cdots & \bg^1_{n-1}
\end{bmatrix}$ by the theory of matrix.  Moreover, each $\bg^1_i$ is injective and $\bg^1_n$ is surjective since $M$ is indecomposable and $s(\gamma^1_i)$ is a leaf, and we clearly have $\im \bg^1_i\subseteq \im \bg^1_n$ because $\rk \Gamma_M=\rk \bg_i=\rk\bg_n$, $1\leq i \leq n-1$.

\end{proof}

We call an indecomposable regular module $M\in \modd(T(n),\Omega)$ \textit{quasi-simple} if it is located at the bottom in a regular component $\cD$, and we have 

\begin{corollary}
If $M$ is a thin module, then we have $M\in\Inj\cup\Sur$ and $M$ is quasi-simple.
\end{corollary}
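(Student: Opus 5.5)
We read the corollary in the standing context of this section: $M$ is an indecomposable thin module of constant rank with $d(M)\geq 2$. Thinness means every nonzero $M_a$ is $k$, and every arrow map inside $T(M)$ is a nonzero scalar, hence both injective and surjective between one-dimensional spaces. The plan is to show that $M$ is either a sink module (all arrow maps surjective, so $M\in\Sur$) or a source module (all arrow maps injective, so $M\in\Inj$). Then we locate $M$ at the mouth of its component.

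The first step is to rule out that $T(M)$ has both a sink leaf and a source leaf. Take a leaf $a_0$ with walk $(a_0,a_1,a_2,\dots)$. By Lemma \ref{point}, $|\cN_M(a_1)|=n$. Suppose $a_0$ is a sink, so $a_1$ is a source. In the thin case Proposition \ref{lj}(a), applied when the other neighbours of $a_1$ are leaves, says the map $M_{a_1}\to M_{a_2}$ is injective, which is automatic. More useful is to argue directly with $\Gamma^\alpha_M$, which is now a $0/1$-pattern matrix times nonzero scalars. Thus $\pi_\lambda(M)$ has $\dim N_1=|T(M)_0\cap T^+_n|$ and $\dim N_2=|T(M)_0\cap T^-_n|$. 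Because $T(M)$ is a tree, the bipartite incidence matrix, after generic scaling, has rank $\min(\dim N_1,\dim N_2)$ or $\dim N_1+\dim N_2-1$. We plan to compute $\rk\Gamma^{\alpha}_M$ for $\alpha$ with some coordinates zero, say $\alpha=e_1$. Then $\bg_1$ is a partial matching, and its rank equals the number of $\gamma_1$-arrows in $T(M)$. Constant rank therefore forces each $\cA_i$ to have the same cardinality $m$, and also forces $\rk\Gamma_M=m$.

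For a tree with $E=\sum_i|\cA_i|=nm$ edges and $E+1$ vertices, the generic rank $\rk\Gamma_M$ is at least $\max(\dim N_1,\dim N_2)$ minus the deficiency. Combining with $\dim N_1+\dim N_2=nm+1$ and rank $m$ gives $\min(\dim N_1,\dim N_2)\le m$. Since each vertex on the smaller side has degree at most $n$, that side has exactly $m$ vertices, each of degree $n$. So one colour class, say the sources, consists of $m$ vertices of full degree $n$, and every sink has degree $\le$ what the tree permits. Counting edges, the sinks number $nm+1-m$. In this case $\rk\bg_i=m=\dim N_1$ for every $i$, and every $x^\alpha$ is surjective onto the source side's dual. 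Concretely, every leaf is a sink, so $M$ is a sink module, and all $x^\alpha_{\pi_\lambda(M)}$ have rank $\dim N_1$, i.e.\ are injective. Dually, if the sinks are the small side, $M$ is a source module and every $x^\alpha$ is surjective. Either way all arrow maps are isomorphisms of one-dimensional spaces, but the relevant membership is $\pi_\lambda(M)\in\EKP$ or $\EIP$. Using \cite[Theorem 8.1]{Daniel} and its dual, this translates to $M\in\Inj$ or $M\in\Sur$.

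The main obstacle is the rank count: showing that the generic rank of a scaled tree incidence matrix is large, so that rank $m$ forces one side to have only $m$ vertices. We would prove it by induction on $d(M)$ using Proposition \ref{position} and Lemma \ref{matrix}. Removing the leaves at $a_1$ peels off a block $\begin{bmatrix}X&0\\S&Y\end{bmatrix}$, giving $\rk\ge \rk X+\rk Y$. For quasi-simplicity, we would use Corollary \ref{component} together with \cite[Theorem 3.3]{Julia}. The sequence $\tau^{-t}M$ stays in $\EKP$ until it enters $\EIP$. The AR-sequence $0\to\tau M\to E\to M\to 0$ then has middle term $E$ indecomposable: $E$ is of dimension $\dim\tau M+\dim M$, and a thin $M$ admits no proper regular submodule in the same component, because any such submodule would have to be thin with smaller support and of the same constant-rank type. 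This last point also needs care, and we expect it to require the explicit ball description $\supp(M)=B_r(c)$ from Proposition \ref{lj}.
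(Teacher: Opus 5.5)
Your route to $M\in\Inj\cup\Sur$ differs from the paper's. The paper repeatedly deletes the leaf star at $a_1$, using Proposition \ref{position} to drop the rank by one each time until $P(1)$ is reached. The first half of your argument is sound. Each vertex of $T(n)$ carries exactly one $\gamma_i$-arrow for every $i$, and the internal maps of an indecomposable thin module are nonzero, so $\rk\bg_i=|\cA_i|$. Hence $|\cA_i|=m$ for all $i$, $T(M)$ has $nm$ edges, and $\rk x^{\alpha}_{\pi_\lambda(M)}=m$ for every nonzero $\alpha$.

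The load-bearing step $\min(\dim N_1,\dim N_2)\le m$ is not proved, and the sentences around it are wrong. A $\dim N_2\times\dim N_1$ matrix cannot have rank $\dim N_1+\dim N_2-1$. A tree-patterned matrix also need not have rank $\min(\dim N_1,\dim N_2)$: take a sink with three source neighbours, one of which has two further sink neighbours; both colour classes have size $3$ but the rank is $2$. The peeling induction with Lemma \ref{matrix} does not help on its own. Each step raises the rank by at least one while removing at most $n$ edges, so it yields only $\rk\Gamma_M\ge nm/n=m$, which is no information.

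What is missing is a duality argument:
\begin{itemize}
\item A forest has at most one perfect matching on any vertex set. So every square minor of $x^{\alpha}_{\pi_\lambda(M)}$ with all $\alpha_i\neq 0$ is $0$ or $\pm$ a product of nonzero entries, and its rank is exactly the matching number of $T(M)$, with no genericity needed.
\item By K\"onig's theorem, $T(M)$ has a vertex cover $C$ with $|C|=m$.
\item Since $nm$ edges are covered by $m$ vertices of degree at most $n$, every vertex of $C$ has degree $n$ in $T(M)$ and every edge has exactly one end in $C$. By connectedness, $C$ is a full colour class.
\item If $C$ is the set of sources, every arrow leaving a source of $\supp(M)$ ends in $\supp(M)$. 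So all $M(\delta)$ are injective and $M\in\Inj$ directly from the definition, without \cite[Theorem 8.1]{Daniel}. Dually, $M\in\Sur$.
\end{itemize}
Repaired this way, your global count is more transparent than the paper's peeling. It produces exactly the full-degree condition that membership in $\Inj$ or $\Sur$ requires. Note also that your opening sentence has the pairing reversed: a thin sink module is never in $\Sur$, because its sink leaves receive zero maps from sources outside the support.

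The quasi-simplicity half has a genuine gap, for two reasons.

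First, it can only hold for regular $M$, which the hypotheses do not provide. For a source $x$, $P(x)$ is thin, indecomposable, of constant rank, has $d(P(x))=2$ and lies in $\Inj$. But it is preprojective, so $\tau P(x)=0$ and your AR-sequence does not exist. In the $\Inj$ case $\pi_\lambda(M)$ has dimension vector $(m,(n-1)m+1)$, on which $a^2+b^2-nab$ takes the value $1-(n-2)m(m-1)$; so regularity amounts to $m\ge 2$.

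Second, for regular $M$, the claim that a thin module has no proper submodule in its own component is merely asserted. A submodule of a thin module in $\Inj$ is again thin and in $\Inj$, so nothing you list excludes it. Proposition \ref{lj} also contains no ball description of $\supp(M)$. Connecting the claim to the indecomposability of $E$ would be routine: for quasi-length $\ell\ge 2$, the irreducible map into $M$ from the module of quasi-length $\ell-1$ on its ray is injective. So the missing piece is the claim itself.

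The paper does not prove this either; it imports quasi-simplicity from \cite[Corollary 10.16]{Daniel}. You would need that result, applied to regular $M$, or a genuinely new argument.
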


\begin{proof}
Let $(a_0,\cdots,a_m)$ be  walk of $T(M)$ such that $a_0$ is a leaf, $m\geq 2$. When  $a_0$ is a sink, we consider matrix $\Gamma_M$
$$\Gamma_M=\sum^n_{i=1}\bg_i=\begin{bmatrix}
\bg^1_1 &  &0 \\
\vdots &   & \vdots \\
\bg^1_{n-1} & & 0 \\
\bg^1_n &  &\Gamma^{1}_M 
\end{bmatrix}.$$
Since $M$ is a thin module, then we have 
\begin{center}
$\ker \gamma^1_1= \cdots =\ker \gamma^1_n=(0)$ and $\rk \Gamma_M=\rk\gamma^1_1+\rk \Gamma^1_M$.

\end{center}
If we delete the set  $\cN_M(a_1)\setminus\{a_2\}$ and all maps $\bg^1_i:M_{a_1}\rightarrow M_b$ in $T(M)$, we will get a new module $M^{1}$ and a matrix $\Gamma_{M^1}$, where $b\in \cN_M(a_1)$, $1\leq i\leq n$,  It is not hard to see that $M^{1}$ is still a thin module and indecomposable. Moreover, we can see that $M^{1}$ is also of constant rank and $\rk \Gamma_{M^1}=\rk\Gamma^1_M=\rk \Gamma_M-1$. Hence we can repeat our process, and delete the set $\cN_M(a_3)\setminus\{a_4\}$ and maps $\bg^2_i:M_{a_3}\rightarrow M_b$ in $T(M^1)$, $b\in \cN_{M^1}(a_3)$. After finite steps, we will get a module $M^p$ and $\pi_\lambda(M^p)\cong P(1)$, that is to say, every leaf of $T(M)$ is a sink, that is, $M\in \Inj$. Similarly, we can show that $M\in \Sur$ when $a_0$ is a source. We get $M$ being quasi-simple by \cite[Corollary 10.16]{Daniel}.

\end{proof}

\begin{corollary}\label{dm}
If $d(M)\leq 2$,  then we have $M\in\Inj\cup\Sur$.

\end{corollary}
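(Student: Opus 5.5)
Throughout, $M$ is an indecomposable module in $\GR$, and by the preceding lemma $d(M)\in\{0,2\}$ under the hypothesis $d(M)\leq 2$.

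If $d(M)=0$, then $M$ is simple, concentrated at one vertex. The module $M$ has no nonzero arrows in its support, so the conditions defining $\Inj$ and $\Sur$ only concern arrows $\delta$ whose both endpoints lie outside the support, or that meet the support in one endpoint. For a simple at a sink, every arrow $\delta$ into it has $M_{s(\delta)}=0$, so $M(\delta)$ is injective; arrows not touching it are zero maps between zero spaces. Hence a simple at a sink lies in $\Inj$, and dually a simple at a source lies in $\Sur$. We record this case and move on.

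Now assume $d(M)=2$, with diameter walk $(a_0,a_1,a_2)$ and center $a_1$. Every vertex of $T(M)$ other than $a_1$ is a leaf adjacent to $a_1$, by Lemma \ref{l} or directly. By Lemma \ref{point}, $|\cN_M(a_1)|=n$, so $T(M)=B_1(a_1)=B(M)$. Write $b_1,\dots,b_n$ for the neighbours, with $b_n=a_2$.

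We first treat the case where $a_0$ is a sink, so $a_1$ is a source. We apply Proposition \ref{lj}(a) to the walk $(a_0,a_1,a_2)$: all $b_i$ with $i\le n-1$ are leaves, so each $\bg^1_i\colon M_{a_1}\to M_{b_i}$ is surjective for $i\le n-1$, and $\bg^1_n$ is injective. We then reapply Proposition \ref{lj}(a) with the roles relabelled, taking $b_1$ as the distinguished neighbour "$a_2$" and $b_n$ as one of the leaves. This is legitimate because every neighbour of $a_1$ is a leaf. It yields that $\bg^1_1$ is injective. Combining the two applications, $\bg^1_1$ is bijective.

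By symmetry, every $\bg^1_i$ is an isomorphism. In particular all arrows of $T(M)$ are injective. Any arrow of $T(n)$ not in $T(M)$ either has zero source, or has source $a_1$ and target outside the support. The latter is impossible because $a_1$ already has all $n$ neighbours in the support. Hence $M\in\Inj$, and also $M\in\Sur$. Since the $\bg^1_i$ are isomorphisms, indecomposability forces $\dim M_{a_1}=1$, but we do not need this.

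When $a_0$ is a source, we dualize using Proposition \ref{lj}(b) and obtain $M\in\Sur$ by the same relabelling trick.

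The main obstacle is justifying the relabelling step. Proposition \ref{lj} is stated for a fixed walk, but its proof only uses that the other $n-1$ neighbours of $a_1$ are leaves, together with the block form of Proposition \ref{position} with respect to the chosen neighbour. The permutation of the neighbours must therefore be checked against that block form; the check amounts to reordering rows of $\Gamma_M$.

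If the relabelling fails, the fallback is to argue directly from Lemma \ref{tt} and Lemma \ref{ddpoint}. From $\rk\bg_i=\rk\bg_j$ one gets $\rk\bg^1_i=\rk\bg^1_j$ for all $i,j$. Lemma \ref{tt} then gives $\ker\bg^1_j\subseteq\ker\bg^1_i$ for all $i,j$, because the images of the $\bg^1_i$ lie in distinct summands. So all kernels coincide, and indecomposability forces this common kernel to vanish.
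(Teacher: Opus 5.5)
Your proof is correct and follows the paper's route. For $d(M)=2$ the paper likewise applies Proposition~\ref{lj} to the star $T(M)=B_1(a_1)$ to conclude that every $\bg^1_i$ is bijective, so $M$ is thin. Your relabelling just makes explicit what the paper's terse ``by Lemma~\ref{lj}'' leaves implicit. It needs no extra check: the walk $(b_n,a_1,b_1)$ satisfies the hypotheses of Proposition~\ref{lj} verbatim, and the colour indices are mere labels.

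One side remark is false and should be deleted: in the sink case $M\notin\Sur$. Each leaf $b_i$ is a sink of $T(n)$ with $n-1$ further neighbours $c$ outside the support. For these, $M(c\to b_i)\colon 0\to M_{b_i}\neq 0$ is not surjective; indeed $\pi_\lambda(M)\cong P(1)$. You overlooked arrows with zero source but nonzero target. This does not affect the conclusion, since $M\in\Inj$ already suffices.

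Also, in your fallback Lemma~\ref{tt} does not literally apply, because its hypothesis $t(\bg^p_i)=t(\bg^{p'}_j)$ fails at a leaf. Apply Lemma~\ref{KI} to $\pi_\lambda(M)$ directly instead; it gives $\bg^1_i(\ker\bg^1_j)\subseteq M_{b_i}\cap M_{b_j}=0$.
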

\begin{proof}
If $d(M)=0$, then module $\pi_\lambda(M)$ is simple we clearly have $M\in\Inj\cup\Sur$. Suppose that $d(M)=2$. Then the module is a sink or source module, that is,  
\begin{center}
$\Gamma_M=\begin{bmatrix}
\bg^1_1 & \bg^1_2 & \cdots & \bg^1_n
\end{bmatrix}$, or $\Gamma_M=\begin{bmatrix}
\bg^1_1 \\
 \bg^1_2 \\
  \vdots \\
   \bg^1_n
\end{bmatrix}$.
\end{center}
Then we have $\rk \bg^1_i=\rk \bg^1_j$ for all $1\leq i,j\leq n$. Since $M$ is indecomposable, we get $M_a=k$ for any $a\in T(M)_0$ by Lemma \ref{lj}, that is, $M\in \Inj\cup \Sur$.

\end{proof}

  Let $M\in$ mod$(T(n), \Omega)$ be an indecomposable module, $n\geq 3$, and let $M$ be of constant rank. From now on, we always assume that $d(M)\geq 3$. For any direct summand $\bg^p_q$ we  define
\begin{center}
$U^p_q= \ker \bg^p_q$ and  $\tilde{U}^p_q=\im\bg^p_q$. 
\end{center}
We let $V^p_q$ and $\tilde{V}^p_q$ be two complement vector spaces, respectively,  such that $M_{s(\bg^p_q)}=U^p_q \oplus V^p_q $ and $M_{t(\bg^p_q)}=\tilde{U}^p_q\oplus \tilde{V}^p_q $. Clearly, there exists many different pairs $(p,q)$ and $(p', q')$ such that  $U^{p}_q\cong U^{p'}_{q'}$ and $V^p_q\cong V^{p'}_{q'}$. Let $f$ be a linear map from a finite-dimensional vector space $U$ to  a finite-dimensional vector space $V$, and let $U'$ be a subspace of $U$. Then we use $f\mid_{U'}$ to denote the restriction of $f$ on $U'$, and we sometimes use $f\mid_{U'}$ to denote a trivial extended linear map from $U$ to $W$ by abusing the symbols. 

For modules of constant rank type in $\modd(T(n),\Omega)$, we have the following.

\begin{Lemma}\label{two leaves}
Let $M\in \modd(T(n),\Omega)$ be an indecomposable module. Suppose that $M$ is of constant rank.  If there exist $\bg^1_j$, $\bg^2_j\in \cA_j$ and $\bg^1_i\in \cA_i$ such that $s(\bg^1_i)=s(\bg^1_j)$, $t(\bg^1_i)=t(\bg^2_j)$ with $t(\bg^1_j)$ and $s(\bg^2_j)$ are leaves, then we have 
\begin{center}
$\rk \bg^1_i=\rk \bg^1_j+\rk\bg^2_j $ and $\im \bg^1_i\mid_{U^1_j}\cong \im \bg^2_j$.
\end{center}

\end{Lemma}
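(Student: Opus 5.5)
The plan is to work locally around the two leaves, using the kernel--image relation of Lemma \ref{tt} for one inclusion and the constant rank of $\pi_\lambda(M)$ for the rank count.

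\emph{Set-up.} Put $a=s(\bg^1_i)=s(\bg^1_j)$, $b=t(\bg^1_j)$, $c=t(\bg^1_i)=t(\bg^2_j)$ and $d=s(\bg^2_j)$. Since $b$ and $d$ are leaves of $T(M)$, the whole module near them is the piece $b\leftarrow a\rightarrow c\leftarrow d$.

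\emph{Step 1: the leaf maps.} Because $M$ is indecomposable, the arguments in Proposition \ref{lj} apply verbatim to the two leaves:
\begin{enumerate}
\item $\bg^1_j$ is surjective, since a complement of $\im\bg^1_j$ in $M_b$ would split off copies of $S(b)$;
\item $\bg^2_j$ is injective, since $\ker\bg^2_j$ would split off copies of $S(d)$.
\end{enumerate}
Hence $\rk\bg^1_j=\dim M_b$ and $\rk\bg^2_j=\dim M_d$.

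\emph{Step 2: one inclusion.} Lemma \ref{tt}, applied with $p=1$ and $p'=2$, gives $\bg^1_i(U^1_j)\subseteq \tilde U^2_j=\im\bg^2_j$. So $\dim \bg^1_i(U^1_j)\le \rk\bg^2_j$. It remains to prove the rank identity; the isomorphism $\im\bg^1_i|_{U^1_j}\cong\im\bg^2_j$ then follows by a dimension count.

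\emph{Step 3: the rank identity.} I would use constant rank along the family $\alpha(t)$ with $\alpha_i=1$, $\alpha_j=t$, and the other coordinates fixed nonzero generic. In the block matrix $\Gamma^{\alpha(t)}_M$ of Proposition \ref{position}:
\begin{enumerate}
\item the row block of $M_b$ contains only $t\bg^1_j$, because $b$ is a leaf with unique neighbour $a$;
\item the column block of $M_d$ contains only $t\bg^2_j$.
\end{enumerate}
For $t\neq 0$, $t\bg^1_j$ is surjective and $t\bg^2_j$ is injective. Row and column reduction with these two blocks then gives
\[
\rk\Gamma^{\alpha(t)}_M=\rk\bg^1_j+\rk\bg^2_j+\rk R,
\]
where $R$ is the remaining matrix after two changes: the column of $a$ is restricted to $U^1_j$, and the rows of $c$ are taken modulo $\im\bg^2_j$. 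By Step 2, $\bg^1_i|_{U^1_j}$ becomes zero modulo $\im\bg^2_j$. So $R$ is the rest of the matrix with the $a$--$c$ entry effectively removed.

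For $t=0$, the blocks $\bg^1_j$ and $\bg^2_j$ disappear, the rows $b$ and columns $d$ are zero, and the $a$--$c$ entry $\bg^1_i$ remains. I would compare this with the same remainder $R$ and aim to show
\[
\rk\Gamma^{\alpha(0)}_M=\rk\bg^1_i+\rk R.
\]
Constant rank says the two ranks agree, and equating them yields $\rk\bg^1_i=\rk\bg^1_j+\rk\bg^2_j$. Combining with Step 2 and $\rk\bg^1_i|_{V^1_j}\le\dim V^1_j=\rk\bg^1_j$ then forces $\dim\bg^1_i(U^1_j)=\rk\bg^2_j$, which is the required isomorphism.

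\emph{Main obstacle.} The hard step is splitting off $\rk\bg^1_i$ exactly at $t=0$. The rest of the matrix shares the column $a$ (through $a$'s other neighbours) and the row $c$ (through $c$'s other neighbours), so the clean additivity from Lemma \ref{matrix} only gives inequalities. My first attempt is to get the additivity from the complementary inequalities. Lemma \ref{matrix} gives $\rk\Gamma^{\alpha(0)}_M\ge\rk\bg^1_i+\rk R$. Lemma \ref{matrix} also gives the analogous lower bound for $t\neq0$. I would then combine the two bounds with the equality of ranks, supplemented by the dual condition to Lemma \ref{KI} applied at the vertices $a$ and $c$, to squeeze out equality. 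If this fails, I would fall back on choosing $\alpha$ so that only the coordinates $i$ and $j$ are nonzero, as in the proof of Lemma \ref{ddpoint}. This isolates the local $2\times2$ block $\begin{bmatrix}\bg^1_j&0\\ \bg^1_i&\bg^2_j\end{bmatrix}$ as far as possible.
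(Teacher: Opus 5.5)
\textbf{Gap: the rank identity.} Your Steps 1 and 2 and the closing dimension count are the paper's argument for the image statement, so that half is fine. The gap is the rank identity, which is the real content of the lemma, and Step 3 cannot deliver it.

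Step 2 already gives the easy direction for free:
\[
\rk\bg^1_i\le\dim\bg^1_i(U^1_j)+\dim V^1_j\le\rk\bg^2_j+\rk\bg^1_j.
\]
What you need is the reverse inequality $\rk\bg^1_i\ge\rk\bg^1_j+\rk\bg^2_j$. Given your $t\neq0$ computation, that requires an \emph{upper} bound $\rk\Gamma^{\alpha(0)}_M\le\rk\bg^1_i+\rk R$ at $t=0$. The lower bound you hope to extract from Lemma \ref{matrix}, combined with the $t\neq0$ count, only reproduces the easy inequality. Moreover, Lemma \ref{matrix} has no zero block to act on here, since column $a$ and row $c$ also carry the other $n-2$ arrows at $a$ and at $c$.

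There is a second problem: $R$ is not a single matrix. Setting $\alpha_j=t$ scales every $\gamma_j$-arrow of $T(M)$, not just $\bg^1_j$ and $\bg^2_j$, so you are really comparing $\rk R(t)$ with $\rk R(0)$. With the other coordinates nonzero, rank can move between the local block and the rest of the matrix when passing from $t\neq0$ to $t=0$. So no squeeze along this pencil closes the argument.

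\textbf{The missing idea.} It is your fallback, carried out fully: take $\alpha\in\langle e_i,e_j\rangle$, i.e.\ $\alpha_k=0$ for all $k\neq i,j$.
\begin{enumerate}
\item Every vertex of $T(n)$ has exactly one $\gamma_i$-arrow and one $\gamma_j$-arrow. So these arrows form disjoint zigzag lines, and $\alpha_i\bg_i+\alpha_j\bg_j$ is block diagonal along them.
\item The line through $a$ leaves $b$ and $d$ by $\gamma_i$-arrows, to vertices different from $a$ and $c$. Since $b$ and $d$ are leaves and $T(M)$ is a subtree, these vertices and everything beyond them lie outside $T(M)$.
\item Hence
\[
L(\alpha_i,\alpha_j)=\begin{bmatrix}\alpha_j\bg^1_j&0\\ \alpha_i\bg^1_i&\alpha_j\bg^2_j\end{bmatrix}
\]
is an honest direct summand of the pencil, not merely a submatrix.
\item The total rank is constant on $\langle e_i,e_j\rangle\setminus\{0\}$. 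Each block's rank is lower semicontinuous and cannot exceed its generic value, and the generic values add up to the total. So each block has constant rank there; this is the same argument that underlies Theorem \ref{DS}.
\item Comparing $(\alpha_i,\alpha_j)=(1,0)$ with $(0,1)$ gives $\rk\bg^1_i=\rk\bg^1_j+\rk\bg^2_j$ directly, without Step 1.
\end{enumerate}
Your dimension count then finishes the proof.

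The paper itself asserts this step in one line from $\rk\Gamma_M=\rk\bg_i=\rk(\bg_i+\bg_j)$. The splitting above is what makes it rigorous.
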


\begin{proof}
Let $\bg'_i=\bigoplus_{\gamma^j_i\in \cA'_i}\bg^j_i$ and $\bg'_j=\bigoplus_{\bg^i_j\in \cA'_j}\bg^i_j$, where $\cA'_i=\cA_i\setminus\{\bg^1_i\}$ and $\cA'_j=\cA_j\setminus\{ \bg^1_j, \bg^2_j\}$, $1\leq i\neq j\leq n$. Since    $t(\bg^1_j)$ and $s(\bg^2_j)$ are leaves,     we have the following by Lemma \ref{point}
\begin{center}
$\Gamma_M= \begin{bmatrix}
0 & \bg^1_1 & 0  & \cdots & 0 & 0 & 0\\
\vdots &\vdots & \vdots & \ddots & \vdots & \vdots & \vdots  \\
 0 &\bg^1_j &  0 &  \cdots & 0 & 0 & 0 \\
\bg^2_j &\bg^1_i & \bg^2_2 & \cdots & \bg^2_n  & 0 & 0  \\
0& 0  & \Gamma_1 & \cdots & \Gamma_{n-2} & \Gamma_{n-1} & \Gamma_{n} 
\end{bmatrix}, $
\end{center}
where $\begin{bmatrix}
0 & 0 &\Gamma_1&   \cdots & \Gamma_n  
\end{bmatrix}= \sum^n_{w=1,w\neq i,j} \bg'_w+\bg'_i+\bg'_j$, and $\bg'_w=\bigoplus_{\bg^j_w\in (\cA_w\setminus\{\bg^2_w\})}\bg^j_w$. Note that $\bg^1_j$ is surjective and $\bg^2_j$ is injective by Lemma \ref{lj}. Using $\rk \Gamma_M=\rk \bg_i=\rk(\bg_i+ \bg_j)$, we have $\rk \bg^1_i=\rk \bg^1_j+\rk\bg^2_j$. According to Lemma \ref{tt}, there exists $ \bg^1_i(\ker \bg^1_j)\subseteq \im \bg^2_j$. If $\bg^1_i(\ker \bg^1_j)\subsetneq \im \bg^2_j$, then $\rk \bg^1_i\leq  \rk \bg^1_i\mid_{U^1_j}+\rk \bg^1_i\mid_{V^1_j}<\rk \bg^2_j+\rk \bg^1_j\mid_{V^1_j}=\rk \bg^2_j+\rk\bg^1_j$ because $\im \bg^1_j=\im \bg^1_j\mid_{V^1_j}$, and this is a contradiction. Hence we have $\im \bg^1_i\mid_{U^1_j}\cong \im \bg^2_j$. 

\end{proof}

Following the conditions in Lemma \ref{two leaves},   if we choose two bases of vector spaces $M_{s(\bg^1_i)}\oplus M_{s(\bg^2_j)} $ and $M_{t(\bg^1_i)}\oplus M_{t(\bg^1_j)}$, respectively, then we have the following

\begin{center}
 $ \begin{bmatrix}
\bg^1_j & 0\\
\bg^1_i & \bg^2_j
\end{bmatrix}=\begin{bmatrix}
b_{11} & b_{12} & \cdots & b_{1u} & 0 &\cdots & 0 \\
b_{21} &  b_{22} & \cdots & b_{2u} & 0 &  \cdots & 0 \\
\vdots & \vdots & \ddots & \vdots & 0 &  \ddots & 0 \\
b_{x1} & b_{x2} & \cdots &b_{xu} & 0 & \cdots & 0 \\
a_{11} & a_{12} & \cdots & a_{1u} & 0 & \cdots  & 0  \\
\vdots & \vdots & \ddots & \vdots  & \vdots & \ddots & \vdots \\
a_{p1} &  a_{p2} & \cdots & a_{pu} &0 & \cdots & 0 \\
a_{p+1 1} &  a_{p+1 2} & \cdots & a_{p+1 u} & c_{11} & \cdots & c_{1v} \\
\vdots & \vdots & \ddots & \vdots  & \vdots & \ddots & \vdots \\
a_{z1} & a_{z2} & \cdots &a_{zu}  & c_{y1} & c_{y2} & \cdots c_{yv} \end{bmatrix}$,

\end{center}
where $\bg^1_j=(b_{ij})_{x\times u}$, $\bg^1_i=(a_{ij})_{z\times u}$, $\bg^2_j=(c_{ij})_{y\times v}$, \hspace{0.1cm} $x=\dim_k M_{t(\bg^1_j)}$, $y=\dim_k \im \bg^2_j$, and $z=\dim_k M_{t(\bg^1_i)}$, $u=\dim_k M_{s(\bg^1_i)}$, $v=\dim_k M_{s(\bg^2_j)}$. Moreover, we have
\begin{center}
$\rk \bg^1_j=\rk ((a_{ij})_{p\times u})=x$ and $\rk \bg^2_j=\rk ((a_{ij})_{qu})=y$,
\end{center}
where $p\leq q \leq z$.  Based on this observation,  we finally have the following conclusion.

\begin{Theorem}\label{es}
Let $M\in \modd(T(n),\Omega)$ be an indecomposable module, $n\geq 3$. Suppose that $M$ is of constant rank. Then

\begin{center}
$M\in \Inj\cup \Sur$.
\end{center}

\end{Theorem}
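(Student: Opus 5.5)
I would argue by induction on the diameter $d(M)$. Corollary \ref{dm} settles $d(M)\leq 2$, so assume $d(M)\geq 3$ and fix a diameter walk $p=(a_0,a_1,\dots,a_{d(M)})$ of $T(M)$. By Lemma \ref{l} every vertex of $\cN_M(a_1)\setminus\{a_2\}$ is a leaf. By Lemma \ref{point} we have $|\cN_M(a_1)|=n$, so we are in the situation of Proposition \ref{lj}.

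The plan is to show that all leaves of $T(M)$ are of the same kind, either all sinks or all sources. Given that, the conclusion follows from Proposition \ref{lj} combined with an inductive peeling argument. Suppose $a_0$ is a sink. Then Proposition \ref{lj}(a) gives that the maps $\bg^1_i$ into the leaves are surjective and that $\bg^1_n\colon M_{a_1}\to M_{a_2}$ is injective. I would then remove the leaves in $\cN_M(a_1)\setminus\{a_2\}$, together with $a_1$, to obtain a submodule or quotient $M^1$ of smaller diameter. I would check that $M^1$ is indecomposable, or at least that its summands are, and still of constant rank. For constant rank, the rank of $x^\alpha$ should drop by exactly $\dim M_{a_1}$: this uses the block shape of Proposition \ref{position} together with the injectivity of $\bg^1_n$, so that the first block column contributes a rank independent of $\alpha$. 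Theorem \ref{DS} then handles the summands. The induction hypothesis places $M^1$ in $\Inj\cup\Sur$, and the sink leaf forces the $\Inj$ case. Finally, the peeled maps must be glued back. Surjectivity of $\bg^1_i$ onto a leaf, combined with the kernel equalities of Proposition \ref{lj}, should force $\ker\bg^1_i=0$, since a nonzero common kernel would split off as a summand through Lemma \ref{tt}. The case where $a_0$ is a source is dual, by transposing $\Gamma_M$.

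The main obstacle is ruling out a mixed configuration, in which a sink leaf and a source leaf meet near a common vertex. This is exactly the situation of Lemma \ref{two leaves}: arrows $\bg^1_j$ ending in a sink leaf and $\bg^2_j$ starting at a source leaf, joined through $\bg^1_i$. That lemma gives $\rk\bg^1_i=\rk\bg^1_j+\rk\bg^2_j$ and $\im\bg^1_i|_{U^1_j}\cong\im\bg^2_j$. In the explicit matrix displayed after that lemma, I would then choose $\alpha$ with $\alpha_i$ and $\alpha_j$ tuned so that the $a$-block rows cancel against the $b$-block and $c$-block rows. This would exhibit $\rk\Gamma^\alpha_M<\rk\Gamma_M$, contradicting constant rank.

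If that direct rank computation fails, my fallback is to use the fact that $k$ is algebraically closed. I would pick $\alpha$ so that $\alpha_i\bg^1_i+\alpha_j(\bg^1_j\oplus\bg^2_j)$ acquires an extra kernel vector, obtained from an eigenvector of the map $\im\bg^1_i|_{U^1_j}\to\im\bg^2_j$ identified by the lemma. This again forces the rank to change and yields the contradiction. Once mixed leaves are excluded, every leaf is a sink or every leaf is a source, and the induction above completes the proof.
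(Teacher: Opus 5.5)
There is a genuine gap, and it sits at the step you call the main obstacle: a rank computation on the mixed-leaf configuration cannot succeed.

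First, on a tree the scalars $\alpha_i$ can be absorbed by rescaling the vertex spaces. So $\rk\Gamma^\alpha_M$ depends only on which $\alpha_i$ are nonzero; the paper records this right after Proposition~\ref{position}. Hence no tuning of $\alpha_i,\alpha_j$ and no eigenvector choice can lower the rank. There is no monodromy on a tree, and Lemma~\ref{two leaves} identifies two subspaces rather than producing an endomorphism.

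Second, the configuration genuinely occurs with constant rank. Let $a_1$ be a source with neighbours $b_1,\dots,b_{n-1},a_2$, and let $a_2$ have further neighbours $c_1,\dots,c_{n-1}$. The direct sum of the thin sink star centred at $a_1$ and the thin source star centred at $a_2$ has constant rank $2$ and diameter $3$. It has sink leaves $b_i$ and source leaves $c_j$, and it satisfies every conclusion of Proposition~\ref{lj} and Lemma~\ref{two leaves}.

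So the contradiction has to come from indecomposability, and that is the idea your proposal lacks. The paper works this way. It writes maps as $\bg|_{V^p_q}+\bg|_{U^p_q}$ relative to the kernels $U^p_q$ of the maps into leaves. Using Lemmas~\ref{tt} and~\ref{two leaves}, it shows that the kernel parts, together with the maps out of source leaves, form a separate block, so $M$ decomposes. For $d(M)=3$, the block $\bg^1_n|_{U^1_1},\bg^2_1,\dots,\bg^2_{n-1}$ splits off from the block of the $\bg^1_i|_{V^1_1}$. Note that for odd $d(M)$ the two ends of a diameter walk always have opposite type. Your mixed-leaf step is therefore the entire claim that no indecomposable flow module has constant rank.

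The peeling induction is also circular. Arrows of a fixed colour form a matching, and $\bg^1_n$ is injective, so $\rk\Gamma^{e_n}_M=\dim M_{a_1}+\rk\Gamma^{e_n}_{M^1}$. For $i<n$, however, $\rk\Gamma^{e_i}_M=\dim M_{a_1}-\dim\ker\bg^1_i+\rk\Gamma^{e_i}_{M^1}$. Hence $M^1$ has constant rank if and only if $\ker\bg^1_i=0$, which you only plan to prove after applying the induction hypothesis to $M^1$.

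Your justification for $\ker\bg^1_i=0$ is exactly the missing splitting argument. Lemma~\ref{tt} gives only $\bg^1_n(\ker\bg^1_i)\subseteq\im M(\delta)$ for the colour-$i$ arrow $\delta$ ending at $a_2$, which is far from a direct summand. Adding thin sink stars centred at each $c_j$ to the example above gives a constant rank module whose leaves are all sinks but with $\ker\bg^1_i\neq 0$. So this step must also use indecomposability globally, by propagating a splitting through the whole tree.

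Finally, even if $M^1$ had constant rank, its indecomposable summands could lie in $\Sur$. Saying that ``the sink leaf forces the $\Inj$ case'' does not rule that out.
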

\begin{proof}
We  assume that $M$ is a sink or flow module since the case of source module is similar.  We now use the induction on the diameter $d(M)$ of $M$.  If $d(M)\leq 2$,  then we already know that $M\in \Inj\cup \Sur$ by Corollary  \ref{dm}.
 
Suppose that $d(M)=3$. According to  Proposition \ref{lj},  We have 

\begin{equation}\label{og}
\Gamma_M=\sum^n_i \bg_i=\begin{bmatrix}
\bg^1_1 & 0 & 0&  \cdots & 0 \\
\bg^1_2 & 0& 0&  \cdots & 0  \\
\vdots & \vdots & \vdots  & \ddots  & \vdots \\
\bg^1_n & \bg^2_1 & \bg^2_2 & \cdots & \bg^2_{n-1}
\end{bmatrix}. 
\end{equation}
Then  we get  
\begin{equation}\label{m1}
 \rk \bg^1_n=\rk\bg^1_i+\rk \bg^2_i  
\end{equation}
by Lemma \ref{two leaves},   $1\leq i\leq n-1$. Moreover, we have

\begin{equation}\label{Step1}
\im \bg^2_i \cong \bg^1_n\mid_{U^1_1} \text{ and } \rk \begin{bmatrix}
\bg^1_1\mid_{V^1_1} \\
\bg^1_2\mid_{V^1_1}\\
\vdots\\
\bg^1_n\mid_{V^1_1}
\end{bmatrix}=\rk \bg^1_i , 1\leq i \leq n-1.
\end{equation}
If we choose two bases of $\pi_\lambda(M)_1$ and $\pi_\lambda(M)_2$ , then we will have 

\begin{equation}
\Gamma_M= \begin{bmatrix}
\bg^1_1\mid_{V^1_1} \\
\bg^1_2\mid_{V^1_1} \\
\vdots \\
\bg^1_{n}\mid_{V^1_1} 
\end{bmatrix} \bigoplus \begin{bmatrix}
\bg^1_n\mid_{U^1_1} & \bg^2_1 & \bg^2_2 & \cdots & \bg^2_{n-1}
\end{bmatrix},
\end{equation}
under the isomorphisms.   Then $M$ is decomposable, and this is a contradiction. Suppose that $d(M)=4$ and $M$ is a sink module.  By Lemma \ref{point},  we get  $\mid\cN_M(s(\bg^2_i))\mid\in \{1,n\}$ i.e., either $s(\bg^2_i)$ is a leaf or has $n$ neighbours comparing  with the matrix in $(\ref{og})$,  $1\leq i\leq n-1$.  Then the biggest size matrix $\Gamma_M$ will look like the following
\begin{equation}\label{bmatrix}
\Gamma_M=\begin{bmatrix}
\bg^1_1 & 0  & 0 & \cdots & 0 \\
\bg^1_2 & 0 & 0& \cdots & 0\\
\vdots &   \vdots & \vdots & \ddots  & \vdots \\
\bg^1_n & \bg^2_1 & \bg^2_2& \cdots & \bg^2_{n-1}\\
0 & \bg^3_2 & 0 & \cdots & 0 \\
0 & \bg^3_3 & 0 & \cdots & 0 \\
\vdots &   \vdots & \vdots & \ddots  & \vdots \\
0 & \bg^3_n & 0 & \cdots & 0 \\
0 & 0 & \bg^4_1 & \cdots & 0 \\
0 & 0 & \bg^4_3 & \cdots & 0 \\
\vdots &   \vdots & \vdots & \ddots  & \vdots \\
0 & 0 & \bg^4_n & \cdots & 0 \\
0 & 0 & 0 & \cdots & 0 \\
0 & 0 & 0 & \cdots & 0 \\
\vdots &   \vdots & \vdots & \ddots  & \vdots \\
0 & 0 & 0 & \cdots & 0 \\
0 & 0 & 0 & \cdots & \bg^{n+1}_1 \\
0 & 0 & 0 & \cdots & \bg^{n+1}_2 \\
\vdots &   \vdots & \vdots & \ddots  & \vdots \\
0 & 0 & 0 & \cdots & \bg^{n+1}_n \\
\end{bmatrix}.
\end{equation}
We take it as a special case of  $\Gamma_M$ when $\mid\cN_M(s(\bg^2_i))\mid=1$ for some $i\in \{1,2,\cdots,n-1\}$. Note that $\bg^1_n$ and $\bg^2_i$ are injective by Proposition \ref{lj} $(a)$, $1\leq i\leq n-1$. Since $s(\bg^i_1)=s(\bg^i_n)$,  $t(\bg^i_1)$ and $t(\bg^i_n)$ are all leaves for all $4 \leq i \leq n+1$, we have 

\begin{equation}\label{r1}
\rk \begin{bmatrix}
\bg^i_1\\
\bg^i_n
\end{bmatrix}=\rk \bg^i_1=\rk \bg^i_n, 4\leq i\leq n+1.
\end{equation}
Hence we have 
\begin{equation}\label{r2}
\rk \begin{bmatrix}
\bg^1_1& 0\\
\bg^1_n & \bg^2_1 \\
0 & \bg^3_n
\end{bmatrix}=\rk \bg^1_1+\rk \bg^2_1=\rk \bg^1_n+\rk \bg^3_n .
\end{equation}
We decomposable it into  the following form

\begin{center}
$ \begin{bmatrix}
\bg^1_1\mid_{V^1_1} & 0 \\
\bg^1_n\mid_{V^1_1}+ \bg^1_n\mid_{U^1_1} & \bg^2_1\mid_{V^3_n}+\bg^2_1\mid_{U^3_n}\\
0 & \bg^3_n\mid_{V^3_n}
\end{bmatrix}. $
\end{center}
Since $\bg^1_n$ and $\bg^2_1$ are injective, the maps $\bg^1_n\mid_{V^1_1}$ and $\bg^2_1\mid_{V^3_n}$ are also injective. Clearly, we have 
\begin{center}
$\rk \begin{bmatrix}
\bg^1_1\mid_{V^1_1}\\
\bg^1_n\mid_{V^1_1}
\end{bmatrix}=\rk \bg^1_1=\rk \bg^1_n\mid_{V^1_1}$.
\end{center}
Similarly, we have  $\rk \begin{bmatrix}
\bg^2_1\mid_{V^3_n} \\
\bg^3_n\mid_{V^3_n}
\end{bmatrix}=\rk \bg^3_n=\rk \bg^2_1\mid_{V^3_n}$. In the end, we can see that  
\begin{equation}\label{r3}
\im \bg^1_n\mid_{U^1_1}\cong \im \bg^2_1\mid_{U^3_2}.
\end{equation}
Basically, it says that we can replace $\bg^2_1$ with $\bg^2_1\mid_{U^{3}_n}$ when $i=1$ in the equation (\ref{Step1}). For the rest $\bg^2_i$ we can do the same thing, $2\leq i\leq n-1$, and  we finally  get the following  $\Gamma_M$ if we take two vector spaces as the same if they are isomorphic
\begin{equation}\label{cyc}
\Gamma_M=\begin{bmatrix}
\bg^1_1\mid_{V^1_1} & 0  & 0 & \cdots & 0 \\
\bg^1_2\mid_{V^1_1} & 0 & 0& \cdots & 0\\
\vdots &   \vdots & \vdots & \ddots  & \vdots \\
\bg^1_n\mid_{V^1_1} & \bg^2_1\mid_{V^3_2} & \bg^2_2\mid_{V^4_1}& \cdots & \bg^2_{n-1}\mid_{V^{n+1}_1}\\
0 & \bg^3_2\mid_{V^3_2} & 0 & \cdots & 0 \\
0 & \bg^3_3\mid_{V^3_2} & 0 & \cdots & 0 \\
\vdots &   \vdots & \vdots & \ddots  & \vdots \\
0 & \bg^3_n\mid_{V^3_2} & 0 & \cdots & 0 \\
0 & 0 & \bg^4_1\mid_{V^4_1} & \cdots & 0 \\
0 & 0 & \bg^4_3\mid_{V^4_1} & \cdots & 0 \\
\vdots &   \vdots & \vdots & \ddots  & \vdots \\
0 & 0 & \bg^4_n\mid_{V^4_1} & \cdots & 0 \\
0 & 0 & 0 & \cdots & 0 \\
0 & 0 & 0 & \cdots & 0 \\
\vdots &   \vdots & \vdots & \ddots  & \vdots \\
0 & 0 & 0 & \cdots & 0 \\
0 & 0 & 0 & \cdots & \bg^{n+1}_1\mid_{V^{n+1}_1} \\
0 & 0 & 0 & \cdots & \bg^{n+1}_2\mid_{V^{n+1}_1} \\
\vdots &   \vdots & \vdots & \ddots  & \vdots \\
0 & 0 & 0 & \cdots & \bg^{n+1}_n\mid_{V^{n+1}_1} \\
\end{bmatrix} \bigoplus \begin{bmatrix}
\bg^1_n\mid_{U^1_1} & \bg^2_1\mid_{U^3_2} &\bg^2_2\mid_{U^4_1} & \cdots & \bg^2_{n-1}\mid_{U^{n+1}_1}
\end{bmatrix}.
\end{equation}
Hence we can see that $M$ is decomposable if  $U^j_i\neq 0$ for some $i$ and $j$, $i\in \{1,2,\cdots,n\}$, $j\in \{1,2\}$.   
We continue doing it by discussing the diameter $d(M)$,  and by replacing the maps in equations $(\ref{og})-(\ref{cyc})$ if there exists some $\bg^i_n$ (or $\bg^1_j$) such that $t(\bg^i_n)$ ( or $t(\bg^1_j)$) is not a leaf,  $ 3\leq i\leq n+1$, $1\leq j\leq n-1$. For example, if $d(M)=5$ and $t(\bg^3_n)$ is not a leaf in matrix ($\ref{bmatrix}$) such that

\begin{equation}\label{five}
\Gamma_M=\begin{bmatrix}
\bg^1_1 & 0  & 0 & \cdots & 0  & 0 & 0& \cdots & 0\\
\bg^1_2 & 0 & 0& \cdots & 0 & 0 & 0& \cdots & 0\\
\vdots &   \vdots & \vdots & \ddots  & \vdots  & \vdots & \vdots& \ddots & \vdots\\
\bg^1_n & \bg^2_1 & \bg^2_2& \cdots & \bg^2_{n-1} & 0 & 0& \cdots & 0\\
0 & \bg^3_2 & 0 & \cdots & 0  & 0 & 0& \cdots & 0\\
0 & \bg^3_3 & 0 & \cdots & 0  & 0 & 0& \cdots & 0\\
\vdots &   \vdots & \vdots & \ddots  & \vdots   & \vdots & \vdots& \ddots & \vdots\\
0 & \bg^3_n & 0 & \cdots & 0& \bg^{n+2}_1 & \bg^{n+2}_2 & \cdots & \bg^{n+2}_{n-1} \\
0 & 0 & \bg^4_1 & \cdots & 0  & 0 & 0& \cdots & 0\\
0 & 0 & \bg^4_3 & \cdots & 0  & 0 & 0& \cdots & 0\\
\vdots &   \vdots & \vdots & \ddots  & \vdots   & \vdots & \vdots& \ddots & \vdots\\
0 & 0 & \bg^4_n & \cdots & 0  & 0 & 0& \cdots & 0\\
0 & 0 & 0 & \cdots & 0  & 0 & 0& \cdots  & 0\\
0 & 0 & 0 & \cdots & 0  & 0 & 0& \cdots  & 0\\
\vdots &   \vdots & \vdots & \ddots  & \vdots  & \vdots & \vdots&\ddots & \vdots \\
0 & 0 & 0 & \cdots & 0  & 0 & 0& \cdots  & 0\\
0 & 0 & 0 & \cdots & \bg^{n+1}_1  & 0 & 0& \cdots  & 0\\
0 & 0 & 0 & \cdots & \bg^{n+1}_2  & 0 & 0& \cdots  & 0\\
\vdots &   \vdots & \vdots & \ddots  & \vdots   & \vdots & \vdots& \ddots & \vdots\\
0 & 0 & 0 & \cdots & \bg^{n+1}_n  & 0 & 0& \cdots & 0\\
\end{bmatrix}.
\end{equation}
where $s(\bg^{n+2}_i)$ is a leaf, $1\leq i\leq n-1$. We compare this matrix with the block matrix  $(\ref{bmatrix})$ when $d(M)=4$. By Proposition \ref{lj} $(b)$, we know that $\bg^{n+4}_i$ is injective, $\bg^3_n$ is surjective and $\im \bg^{n+2}_i\subseteq \im \bg^3_n$ under the isomorphisms.  Then 

\begin{center}
$ \im \bg^3_n\mid_{U^3_i}\cong \im \bg^{n+2}_i\cong \im \bg^{n+2}_1$ and $\rk \bg^3_n=\rk \bg^3_i+\rk \bg^{n+2}_i$, $2\leq i\leq n-1$,
\end{center}
and we have

\begin{equation}
\rk \begin{bmatrix}
\bg^1_1 & 0 & 0 \\
\bg^1_n\mid_{V^1_1}+\bg^1_n\mid_{U^1_1} & \bg^2_1\mid_{V^3_2}+\bg^2_1\mid_{U^3_2} & 0\\
0 & \bg^3_n& \bg^{n+2}_1
\end{bmatrix}=\rk \begin{bmatrix}
\bg^1_1\mid_{V^1_1} \\
\bg^1_n\mid_{V^1_1}
\end{bmatrix} +\rk  \begin{bmatrix}
0& 0 & 0 \\
\bg^1_n\mid_{U^1_1} &\bg^2_1\mid_{V^3_2}+\bg^2_1\mid_{U^3_2} & 0\\
0 & \bg^3_n\mid_{V^3_2}+\bg^3_n\mid_{U^3_2} & \bg^{n+2}_1
\end{bmatrix}.   
\end{equation}
It says that we have the matrix

\begin{equation*}
\Gamma_M=\begin{bmatrix}
\bg^1_1\mid_{V^1_1} & 0  & 0 & \cdots & 0 \\
\bg^1_2\mid_{V^1_1} & 0 & 0& \cdots & 0\\
\vdots &   \vdots & \vdots & \ddots  & \vdots \\
\bg^1_n\mid_{V^1_1} & \bg^2_1\mid_{V^3_2} & \bg^2_2\mid_{V^4_1}& \cdots & \bg^2_{n-1}\mid_{V^{n+1}_1}\\
0 & \bg^3_2\mid_{V^3_2} & 0 & \cdots & 0 \\
0 & \bg^3_3\mid_{V^3_2} & 0 & \cdots & 0 \\
\vdots &   \vdots & \vdots & \ddots  & \vdots \\
0 & \bg^3_n\mid_{V^3_2} & 0 & \cdots & 0 \\
0 & 0 & \bg^4_1\mid_{V^4_1} & \cdots & 0 \\
0 & 0 & \bg^4_3\mid_{V^4_1} & \cdots & 0 \\
\vdots &   \vdots & \vdots & \ddots  & \vdots \\
0 & 0 & \bg^4_n\mid_{V^4_1} & \cdots & 0 \\
0 & 0 & 0 & \cdots & 0 \\
0 & 0 & 0 & \cdots & 0 \\
\vdots &   \vdots & \vdots & \ddots  & \vdots \\
0 & 0 & 0 & \cdots & 0 \\
0 & 0 & 0 & \cdots & \bg^{n+1}_1\mid_{V^{n+1}_1} \\
0 & 0 & 0 & \cdots & \bg^{n+1}_2\mid_{V^{n+1}_1} \\
\vdots &   \vdots & \vdots & \ddots  & \vdots \\
0 & 0 & 0 & \cdots & \bg^{n+1}_n\mid_{V^{n+1}_1} \\
\end{bmatrix} \bigoplus \begin{bmatrix}
\bg^1_n\mid_{U^1_1} & \bg^2_1\mid_{U^3_2} &\bg^2_2\mid_{U^4_1} & \cdots & \bg^2_{n-1}\mid_{U^{n+1}_1} & 0 & \cdots & 0\\
0 & \bg^3_n\mid_{U^3_2} & 0 & \cdots & 0 & \bg^{n+2}_1 & \cdots & \bg^{n+2}_{n-1}
\end{bmatrix}.
\end{equation*}
Module $M$ is also decomposable.

We repeat the above process by inducting the length of diameter  walk of $T(M)$, and analysing  the block matrices $\Gamma_M$ or its transpose $\Gamma^t_M$:  we  write each map $\bg^i_j$ into the form 
 $\bg^i_j=\bg^i_j\mid_{V^p_q}+\bg^i_j\mid_{U^p_q}$ with some map $\bg^p_q$ when $s(\bg^i_j)=s(\bg^p_q)$ and $t(\bg^p_q)$ is an ending point in a diameter walk of $T(M)$. 
 According to Proposition \ref{position}, we have  $\rk \Gamma_M= \rk \bg^1_1\mid_{V^1_1}+\rk \Gamma'_M$ when $M$ is of constant rank. Note that the rank of $\Gamma'_M$ is also fixed.  Hence we can replace $\Gamma'_M$ with $\Gamma_M$ and continue our discussion. Finally, we  add all $\bg^i_j\mid_{V^p_q}$ (or $\bg^i_j\mid_{U^p_q}$) in one block matrix, and we get $\Gamma_M$ being decomposable.
 
  We can do the same thing when $M$ is a flow or source module. Finally, we have $M\in \Inj \cup \Sur$ if $M$ is indecomposable and of constant rank.

\end{proof}

We now discuss the modules of constant rank in the   regular component. Let $\cC$ be a regular component of $\modd\cK_n$. It is shown that there are uniquely determined quasi-simple modules $M_{\cC}$ and $W_{\cC}$ in $\cC$ such that the cone $(M_{\cC}\rightarrow)$ of all successors of $M_{\cC}$ satisfies $(M_{\cC}\rightarrow)= \EKP\cap \cC$ and the cone $(\rightarrow W_{\cC})$ of all predecessors of $W_{\cC}$ satisfies $(\rightarrow W_{\cC})=\EIP \cap \cC$ \cite[Theorem 3.3]{Julia}. Then we have

\begin{Definition}
Let $\cC$ be a regular component of mod $\mathcal{K}_n$. We  define  an integer $\cW(\cC)$  satisfying $\tau^{\cW(\cC)+1}M_{\cC}=W_{\cC}$.  We call $\cW(\cC)$  the \textit{width} of $\cC$. Let $\cD$ be a regular component of mod$(T(n), \Omega)$. We similarly have an integer $\cW(\cD)$  satisfying $\tau^{\cW(\cD)+1}M_{\cD}=W_{\cD}$. We call $\cW(\cD)$  the \textit{width} of $\cD$.

\end{Definition}

By Theorem \ref{es}, we have an important conclusion.
\begin{proposition}
Let $\cD$ be a regular component of $\modd(T(n),\Omega)$. Then the modules of constant rank in $\cD$ satisfy 

\begin{center}
$\cD\bigcap \GR=\cD\bigcap (\Inj\bigcup \Sur)$.
\end{center}

\end{proposition}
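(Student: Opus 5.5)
The statement follows almost immediately from Theorem \ref{es}, so the plan is to prove the two inclusions separately, with the elements of $\cD$ being indecomposable regular modules.

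\emph{The inclusion $\cD\cap(\Inj\cup\Sur)\subseteq\cD\cap\GR$.} Take $M\in\cD\cap\Inj$. Since $M$ is regular, \cite[Theorem 8.1]{Daniel} gives $\pi_\lambda(M)\in\EKP$, and hence $\pi_\lambda(M)\in\CR$ because $\EKP\cup\EIP\subseteq\CR$. For $M\in\Sur$ I would use the dual statement, $M\in\Sur$ if and only if $\pi_\lambda(M)\in\EIP$. If this is not available verbatim, I would derive it directly. Each $x^\alpha_{\pi_\lambda(M)}$ is a direct sum of the maps $\sum_{i}\alpha_i M(\gamma^j_i)$ indexed by the sink vertices $y$ of $T(M)$. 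Surjectivity of $\pi_\lambda(M)$ at a sink $y$ follows from surjectivity of each single arrow into $y$, because some $\alpha_i\neq 0$, except at leaves where only one arrow enters. This remaining case needs a short check using the support structure. Alternatively, one could apply the duality on $\modd(T(n),\Omega)$ (opposite orientation) together with the $\Inj$/$\EKP$ statement.

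\emph{The inclusion $\cD\cap\GR\subseteq\cD\cap(\Inj\cup\Sur)$.} If $M\in\cD\cap\GR$, then $M$ is indecomposable and of constant rank with $n\geq 3$. Theorem \ref{es} applies directly and gives $M\in\Inj\cup\Sur$.

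\emph{Expected obstacle.} The only nontrivial point is the $\Sur$ half of the first inclusion, namely that every surjective-arrow regular module pushes down into $\EIP$. That is the step I would verify via duality of the equivalence in \cite[Theorem 8.1]{Daniel}.
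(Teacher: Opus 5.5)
Your proposal is correct and follows the paper's route: the paper gets $\cD\cap\GR\subseteq\cD\cap(\Inj\cup\Sur)$ directly from Theorem \ref{es}, and leaves the reverse inclusion implicit in $\Inj\leftrightarrow\EKP$, $\Sur\leftrightarrow\EIP$ and $\EKP\cup\EIP\subseteq\CR$, which you spell out. For the $\Sur$ half, use the duality argument and drop the ``direct'' fallback: $x^\alpha_{\pi_\lambda(M)}$ is not a direct sum over the sink vertices, since each source space maps into several sinks at once, so surjectivity onto each $M_y$ separately does not give surjectivity of $x^\alpha_{\pi_\lambda(M)}$.
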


This proposition tells us that there is no modules of constant rank in the middle (outside of $\Inj\cup \Sur$) of the regular components of $\modd(T(n),\Omega)$ \cite[Theorem 3.7]{Jie2}. We now give an example.

\begin{example}
Let $M\in\modd(T(3),\Omega)$ be the following 
\begin{center}

\begin{tikzpicture}
\node (00) at (0,0) {$0$};
\node (10) at (1,0) {$k$};
\node (20) at (2,0) {$k$};
\node (30) at (3,0) {$k$};
\node (40) at (4,0) {$k$};
\node (50) at (5,0) {$k$};
\node (60) at (6,0) {$0$,};
\node (1-1) at (1,-1) {$0$};
\node (2-1) at (2,-1) {$k$};
\node (3-1) at (3,-1) {$0$};
\node (4-1) at (4,-1) {$k$};
\node (5-1) at (5,-1) {$0$};

\path [->] (00) edge (10)
           (20) edge node[above]{$\gamma^1_1$}(10) 
           (1-1) edge (10)
          (20) edge node[above]{$\gamma^1_3$} (30)
            (40) edge node[above]{$\gamma^2_1$} (30)               
             (20) edge node[midway, right]{$\gamma^1_2$} (2-1)            
                (40) edge node[above]{$\gamma^2_3$} (50)
                (40) edge node[midway, right]{$\gamma^2_2$} (4-1)
                (60) edge (50)
                (5-1) edge (50)
              (3-1) edge (30)
                      
                           ;
\end{tikzpicture}
\end{center}
where $\gamma^i_j\in k\setminus\{0\}$ and $\pi(\gamma^i_j)=\gamma_j$, $i\in\{1,2\}$, $j\in \{1,2,3\}$. Then $M$ is regular.  By direct computation, we have $M\in \Inj$ and $\tau^2 M\in \Sur$ (see Figure 2).

\begin{figure}[!h]

\begin{center}

\begin{tikzpicture}[very thick,scale=0.7]

                    [every node/.style={fill, circle, inner sep = 1pt}]

%%%%%%%%%%%%%%%%%%%%%%%%%%%%%% Parameter %%%%%%%%%%%%%%%%%%%%%%%%%%%%%%%%%%%

\def \n {8} % #Knoten Reihe  - 1

\def \m {4} % #Knoten Spalte - 1

\def \translation {0} % 1 Für Translation

\def \ab {0.15} % Abstand Pfeil und Knoten

\def \Pab {0.6} % Halber Abstand Horizontal

\def \lcone {1} % 1 für linken Kegel

\def \ldist {3} % Anzahl der quasi-einfachen die eingeschlossen werden - 1

\def \lcolor {red} % white für keine Farbe

\def \rcone {1} % 1 für rechten Kegel

\def \rdist {3} % Anzahl der quasi-einfachen die eingeschlossen werden - 1

\def \rcolor {red} %  white für keine Farbe

\def \llcone {0} % 1 für einen zweiten linken Kegel rechts von lcone

\def \lldist {4} % Anzahl der quasi-einfachen die eingeschlossen werden - 1

\def \rrcone {0} %1 für einen zweiten rechten Kegel links von rcone

\def \rrdist {4} % Anzahl der quasi-einfachen die eingeschlossen werden - 1

%%%%%%%%%%%%%%%%%%%%%%%%%%%% Quellcode %%%%%%%%%%%%%%%%%%%%%%%%%%%%%%%%%%%%%%

\foreach \a in {0,...,\n}{

\foreach \b in {0,...,\m}{

    \ifthenelse{\a = \n \and \b < \m}{

   %\node[color=black] ({\a,\b}) at ({\a*2*\Pab+\Pab},{\b*2*\Pab+\Pab}) {$\circ$};

   \node[color=black] ({\a,\b,5})at ({\a*2*\Pab},{\b*2*\Pab}) {$\circ$};

     }

     {

      \ifthenelse{\b = \m \and \a < \n}{

      \node[color=black] ({\a,\b}) at ({\a*2*\Pab+\Pab},{\b*2*\Pab+\Pab}) {$\circ$};

      \node[color=black] ({\a,\b,5})at ({\a*2*\Pab},{\b*2*\Pab}) {$\circ$};

      }

      {

      \ifthenelse{\b = \m \and \a = \n}

     {\node[color=black] ({\a,\b,5})at ({\a*2*\Pab},{\b*2*\Pab}) {$\circ$};}

    {\node[color=black] ({\a,\b}) at ({\a*2*\Pab+\Pab},{\b*2*\Pab+\Pab}) {$\circ$};

    \node[color=black] ({\a,\b,5})at ({\a*2*\Pab},{\b*2*\Pab}) {$\circ$};

      }

      }

      }

    }

    }

\foreach \s in {0,...,\n}{

\foreach \t in {0,...,\m}{  

 \ifthenelse{\t = \m \and \s < \n}{

    \draw[->] (\s*2*\Pab+\ab,\t*2*\Pab+\ab) to (\s*2*\Pab+\Pab-\ab,\t*2*\Pab+\Pab-\ab); 

    \draw[->] (\s*2*\Pab+\Pab+\ab,\t*2*\Pab+\Pab-\ab) to (\s*2*\Pab+2*\Pab-\ab,\t*2*\Pab+\ab); 

 }{

   \ifthenelse{\s = \n \and \t < \m}{

    %\draw[->] (\s*2*\Pab+\Pab+\ab,\t*2*\Pab+\Pab+\ab) to (\s*2*\Pab+2*\Pab-\ab,\t*2*\Pab+2*\Pab-\ab);

   %\draw[->] (\s*2*\Pab+\ab,\t*2*\Pab+2*\Pab-\ab) to (\s*2*\Pab+\Pab-\ab,\t*2*\Pab+\Pab+\ab);  

   %\draw[->] (\s*2*\Pab+\ab,\t*2*\Pab+\ab) to (\s*2*\Pab+\Pab-\ab,\t*2*\Pab+\Pab-\ab); 

   }

  {

  \ifthenelse{\s = \n \and \t = \m}{

    }{

   \draw[->] (\s*2*\Pab+\ab,\t*2*\Pab+\ab) to (\s*2*\Pab+\Pab-\ab,\t*2*\Pab+\Pab-\ab); 

   \draw[->] (\s*2*\Pab+\Pab+\ab,\t*2*\Pab+\Pab+\ab) to (\s*2*\Pab+2*\Pab-\ab,\t*2*\Pab+2*\Pab-\ab);

   \draw[->] (\s*2*\Pab+\ab,\t*2*\Pab+2*\Pab-\ab) to (\s*2*\Pab+\Pab-\ab,\t*2*\Pab+\Pab+\ab); 

   \draw[->] (\s*2*\Pab+\Pab+\ab,\t*2*\Pab+\Pab-\ab) to (\s*2*\Pab+2*\Pab-\ab,\t*2*\Pab+\ab);    

   }

   }

   }

    }

    }

    \draw[->] (\n*2*\Pab+\ab,\m*\Pab+2*\Pab+\Pab+\Pab+\ab) to (\n*2*\Pab+\Pab-\ab,\m*\Pab+2*\Pab+\Pab+\Pab+\Pab-\ab); 

\ifthenelse{\isodd{\m}}

%% IF

 { 
 
  \node[color=black] (Dots1) at (0,\m*\Pab+2*\Pab+\Pab) {$\cdots$};

  \node[color=black] (Dots2) at (1+\n*2*\Pab,\m*\Pab+2*\Pab+\Pab) {$\cdots$};

   \ifthenelse{\isodd{\n}}{

  \node[color=black] (Dots3) at (0.5*\n*2*\Pab,2*\m*\Pab+2*\Pab) {$\vdots$};}

  {\node[color=black] (Dots3) at (0.5*\n*2*\Pab,2*\m*\Pab+\Pab) {$\vdots$};} 

  }

%% Else

  {

  \node[color=black] (Dots1) at (0,\m*\Pab+\Pab) {$\cdots$};

  \node[color=black] (Dots2) at (1+\n*2*\Pab,\m*\Pab+\Pab) {$\cdots$};

  \ifthenelse{\isodd{\n}}{

  \node[color=black] (Dots3) at (0.5*\n*2*\Pab,2*\m*\Pab+2*\Pab) {$\vdots$};}

  {\node[color=black] (Dots3) at (0.5*\n*2*\Pab,2*\m*\Pab+\Pab) {$\vdots$};}

  }

\ifthenelse{\translation = 1}{

   \foreach \s in {0,...,\n}{

   \foreach \t in {0,...,\m}{ 

   \ifthenelse{\s = 0}{}{

      \ifthenelse{\s = \n}{\draw[->,dotted,thin] (\s*2*\Pab-\ab,\t*2*\Pab) to (\s*2*\Pab-2*\Pab+\ab,\t*2*\Pab); }{

   \draw[->,dotted,thin] (\s*2*\Pab-\ab,\t*2*\Pab) to (\s*2*\Pab-2*\Pab+\ab,\t*2*\Pab); 

   \draw[->,dotted,thin] (\s*2*\Pab-\ab+\Pab,\t*2*\Pab+\Pab) to (\s*2*\Pab-2*\Pab+\Pab+\ab,\t*2*\Pab+\Pab); 

   }

   }}

}}

{}  %ELSE

\begin{scope}[on background layer]

 \ifthenelse{\llcone = 1}{

        \draw[fill = \lcolor!5]

        (-\ab,-\ab) node[anchor=north]{}

  -- (\lldist*\Pab*2+0.7*\Pab,-\ab) node[anchor=north]{}

  -- (-\ab,\lldist*\Pab*2+0.7*\Pab) node[anchor=south]{}; 

   }

   {}

\ifthenelse{\rrcone = 1}{

        \draw[fill = \rcolor!5] (\n*\Pab*2+\ab,-\ab) node[anchor=north]{}

  -- (\n*\Pab*2-\rrdist*\Pab*2-0.7*\Pab,-\ab) node[anchor=north]{}

  -- (\n*\Pab*2+\ab,\rrdist*\Pab*2+0.7*\Pab) node[anchor=south]{};

    }

  {}

 \ifthenelse{\llcone = 1}{

        \draw

        (-\ab,-\ab) node[anchor=north]{}

  -- (\lldist*\Pab*2+0.7*\Pab,-\ab) node[anchor=north]{}

  -- (-\ab,\lldist*\Pab*2+0.7*\Pab) node[anchor=south]{}; 

   }

   {}

\end{scope}

\begin{scope}[on background layer]

\ifthenelse{\lcone = 1}{

        \draw[fill= \lcolor!40] (-\ab,-\ab) node[anchor=north]{}

  -- (\ldist*\Pab*2+0.7*\Pab,-\ab) node[anchor=north]{}

  -- (-\ab,\ldist*\Pab*2+0.7*\Pab) node[anchor=south]{}; 

   }

   {}

\ifthenelse{\rcone = 1}{

        \draw[fill= \rcolor!40](\n*\Pab*2+\ab,-\ab) node[anchor=north]{}

  -- (\n*\Pab*2-\rdist*\Pab*2-0.7*\Pab,-\ab) node[anchor=north]{}

  -- (\n*\Pab*2+\ab,\rdist*\Pab*2+0.7*\Pab) node[anchor=south]{};

    }

  {}

\end{scope}

\node[color=black]  at (5,-0.31) {$\tau M$};   
\node[color=black]  at (8,-0.5) {$\Inj$};    
\node[color=black]  at (2,-0.5) {$\Sur$}; 
\end{tikzpicture}

\end{center}

\caption{$\cW(\cD)=1$ of regular component $\cD$}

\label{Fig:RCGM1}

\end{figure}
\end{example}

\clearpage
%%%%%%%%%%%%%%%%%%%%%%% REFERENCES %%%%%%%%%%%%%%%%%%%


\begin{bibdiv}
\begin{biblist}
\addcontentsline{toc}{chapter}{\textbf{Bibliography}}

\bib{Assem1}{book}{
title={Elements of the representation Theory of Associative Algebras, I},
subtitle={Techniques of Representation Theory},
series={London Mathematical Society Student Texts},
author={I. Assem},
author={D. Simson},
author={A. Skowro\'nski},
publisher={Cambridge University Press},
date={2006},
address={Cambridge},
}

\bib{Assem2}{book}{
title={Elements of the representation Theory of Associative Algebras, \Romannum{3}},
subtitle={Representation-infinite Tilted Algebras},
series={London Mathematical Society Student Texts},
author={I. Assem},
author={D. Simson},
author={A. Skowro\'nski},
publisher={Cambridge University Press},
date={2007},
address={Cambridge},
}

\bib{Kerner}{article}{
title={Representations of Wild Quivers Representation theory of algebras and related topics},
author={Kerner, O.},
journal={CMS
Conf. Proc.},
volume={19},
date={1996},
pages={65-107},

}

\bib{Julia}{article}{
title = {Categories of modules for elementary abelian p-groups and generalized Beilinson algebras},
author = {Julia Worch},
journal = {J.London Math.Soc},
date = {2013},
volume = {88},
pages = {649-688},
}




\bib{Carlson}{article}{ 
    author={Carlson, Jon F.},
   author={Friedlander, Eric M.},
   author={Pevtsova, Julia},
     TITLE = {Modules of constant {J}ordan type},
   JOURNAL = {J. Reine Angew. Math.},
  FJOURNAL = {Journal f\"ur die Reine und Angewandte Mathematik. [Crelle's
              Journal]},
    VOLUME = {614},
      YEAR = {2008},
     PAGES = {191--234},
      ISSN = {0075-4102,1435-5345},
   MRCLASS = {20G05 (14L15 16G70)},
  MRNUMBER = {2376286},
MRREVIEWER = {Alan\ Koch},
       DOI = {10.1515/CRELLE.2008.006},
       URL = {https://doi.org/10.1515/CRELLE.2008.006},
}




\bib{Bongartz}{article}{
title={Covering spaces in representation theory},
author={K.Bongartz and P.Gabriel},
journal={Inventiones mathematicae},
volume={65},
date={1981/82},

pages={331-378},
}


\bib{Claus}{article}{
title = {The shift orbits of the graded Kronecker modules},
author = {Claus Michael Ringel},

journal = {Mathematische Zeitschrift},
volume = {290},
date = {2018},
pages = {1199-1222},
number = {3},
}






\bib{Claus5}{webpage}{
title = {Simple representations, thin representations.},
author = {Michael Ringel, Claus},
date = {2012},
url = {https://www.math.uni-bielefeld.de/~sek/kau/leit2v2.pdf},

}

\bib{Daniel}{thesis}{
title = {Representations of Regular Trees and Invariants of AR-Components for Generalized Kronecker Quivers},
author = {Bissinger, Daniel},
school={Mathematisch-Naturwissenschaftliche
Fakultät, Christian-Albrechts-Universität zu Kiel},
type={PhD thesis},
date = {2018},
url = {https://macau.uni-kiel.de/servlets/MCRFileNodeServlet/dissertation_derivate_00007342/DissertationDanielB.pdf}
}








\bib{Claus1}{article}{
title = {Finite-dimensional hereditary algebras of wild representation type},
author = {Claus Michael Ringel},

journal = {Mathematische Zeitschrift},
volume = {161},
date = {1978},
pages = {235-255},

}


\bib{Claus3}{webpage}{
title = { Simple representations, thin representations},
author = {Michael  Ringel,Claus},

url = {https://www.math.uni-bielefeld.de/~sek/kau/leit2v2.pdf},
}


















\bib{George}{article}{
author = { George   Matsaglia  and  George   P. H. Styan },
title = {Equalities and Inequalities for Ranks of Matrices},
journal = {Linear and Multilinear Algebra},
volume = {2},
number = {3},
pages = {269-292},
year  = {1974},
publisher = {Taylor & Francis},
}


\bib{West}{article}{
  title = {Spaces of linear transformations of equal rank},
  author = {R.Westwick},
  journal = {Linear algebra and its applications},
  volume = {5},
  number = {1},
  pages = {49-64},
  year = {1972},
  publisher = {North-Holland}
}




\bib{Jie2}{webpage}{
title = { Widths of regular components for $n$-regular tree $T(n)$},
author = {Liu, Jie},

url = {https://arxiv.org/pdf/2606.06964},
}










\end{biblist}
\end{bibdiv}
\end{document}